\documentclass{amsart}

\usepackage{amsfonts,amsmath,amssymb}
\usepackage{latexsym,graphicx}

\newcommand{\bydef}{:=}

\newcommand{\bg}{{\overline{g}}}
\newcommand{\bt}{{\overline{t}}}

\newcommand{\bG}{{\overline{G}}}

\newcommand{\tA}{{\widetilde{A}}}

\newcommand{\bT}{{\overline{T}}}

\newcommand{\wt}[1]{\widetilde{#1}}

\newcommand{\vp}{\varphi}

\newcommand{\matr}[1]{\left(\begin{smallmatrix}#1\end{smallmatrix}\right)}

\DeclareMathOperator*{\ot}{\otimes}

\newcommand{\beq}{\begin{equation}}
\newcommand{\eeq}{\end{equation}}
\newcommand{\beas}{\begin{eqnarray*}}
\newcommand{\eeas}{\end{eqnarray*}}
\newcommand{\id}{\mathrm{id}}

\newcommand{\cA}{\mathcal{A}}
\newcommand{\cB}{{\mathcal B}}
\newcommand{\cC}{\mathcal{C}}
\newcommand{\cD}{\mathcal{D}}

\newcommand{\cJ}{\mathcal{J}}
\newcommand{\cK}{\mathcal{K}}
\newcommand{\cL}{\mathcal{L}}

\newcommand{\cO}{\mathcal{O}}

\newcommand{\cQ}{\mathcal{Q}}
\newcommand{\cR}{\mathcal{R}}

\newcommand{\cV}{\mathcal{V}}

\newcommand{\frM}{\mathfrak{M}}
\newcommand{\frN}{\mathfrak{N}}

\newcommand{\R}{\mathbb{R}}
\newcommand{\C}{\mathbb{C}}

\newcommand{\NN}{\mathbb{N}}
\newcommand{\ZZ}{\mathbb{Z}}

\newcommand{\RR}{\mathbb{R}}

\newcommand{\CC}{\mathbb{C}}

\newcommand{\HH}{\mathbb{H}}
\newcommand{\OO}{\mathbb{O}}

\newcommand{\FF}{\mathbb{F}}

\newcommand{\LL}{\mathbb{L}}
\newcommand{\chr}[1]{\mathrm{char}\,#1}


\DeclareMathOperator{\Hom}{\mathrm{Hom}}
\DeclareMathOperator{\Gal}{\mathrm{Gal}}
\DeclareMathOperator{\End}{\mathrm{End}}
\DeclareMathOperator{\Alg}{\mathrm{Alg}}

\DeclareMathOperator{\im}{\mathrm{im}\,}

\DeclareMathOperator{\Aut}{\mathrm{Aut}}
\DeclareMathOperator{\AAut}{\mathbf{Aut}}

\DeclareMathOperator{\supp}{\mathrm{Supp}}

\DeclareMathOperator{\lpc}{\mathit{L}_\pi^\chi}




\newcommand{\GL}{\mathrm{GL}}
\newcommand{\GLs}{\mathbf{GL}} 

\newtheorem{theorem}{Theorem}[section]

\newtheorem{lemma}[theorem]{Lemma}
\newtheorem{corollary}[theorem]{Corollary}
\newtheorem{proposition}[theorem]{Proposition}

\newtheorem{example}[theorem]{Example}
\newtheorem{remark}[theorem]{Remark}


\newcommand{\CD}{\mathfrak{CD}} 



\begin{document}

\title[On nonassociative graded-simple algebras]{On nonassociative graded-simple algebras\\ over the field of real numbers}

\author[Y.~Bahturin]{Yuri Bahturin}
\address{Department of Mathematics and Statistics, Memorial
University of Newfoundland, St. John's, NL, A1C5S7, Canada}
\email{bahturin@mun.ca}

\author[M.~Kochetov]{Mikhail Kochetov}
\address{Department of Mathematics and Statistics, Memorial
University of Newfoundland, St. John's, NL, A1C5S7, Canada}
\email{mikhail@mun.ca}

\subjclass[2010]{Primary 16W50; Secondary 17D05, 17C20, 17C60}
\keywords{Graded algebra, loop algebra, alternative algebra, Jordan algebra, division algebra, classification}
\thanks{The authors acknowledge support by the Natural Sciences and Engineering Research Council (NSERC) of Canada (DG \# 227060-2014 and 341792-2013).}

\begin{abstract}
We extend the loop algebra construction for algebras graded by abelian groups to study graded-simple algebras over the field of real numbers 
(or any real closed field). 
As an application, we classify up to isomorphism the graded-simple alternative (nonassociative) algebras and 
graded-simple finite-dimensional Jordan algebras of degree $2$.
We also classify the graded-division alternative (nonassociative) algebras up to equivalence. 
\end{abstract}

\maketitle

\section{Introduction}\label{sI}

Let $\cA$ be an algebra over a field $\FF$ and let $G$ be a group. A \emph{$G$-grading} on $\cA$ is a 
vector space decomposition $\Gamma:\;\cA=\bigoplus_{g\in G}\cA_g$ such that $\cA_g \cA_h\subset \cA_{gh}$ 
for all $g,h\in G$. The subspaces $\cA_g$ are called the \emph{homogeneous components}. 
A \emph{$G$-graded algebra} is an algebra with a fixed $G$-grading.
The nonzero elements $x\in\cA_g$ are said to be \emph{homogeneous of degree} $g$, which can be written as $\deg x=g$, 
and the \emph{support} of $\Gamma$ (or of $\cA$) is the set $\supp\Gamma\bydef\{g\in G\;|\;\cA_g\neq 0\}$. 
A subspace $\cB$ in $\cA$ (in particular, a subalgebra or ideal) is called 
\emph{graded} if $\cB=\bigoplus_{g\in G}\cB_g$ where $\cB_g\bydef \cB\cap \cA_g$.

Group gradings have been extensively studied for associative, Lie, Jordan, alternative, and other types of algebras. 
In particular, a classification of gradings is known for many simple finite-dimensional algebras of these types. 
The situation is especially well understood if the grading group $G$ is abelian and the ground field $\FF$ 
is algebraically closed (see e.g. a recent monograph \cite{EKmon} and the references therein). 

In the context of $G$-graded algebras, it is natural to consider graded analogs of the standard concepts.
For example, a \emph{homomorphism of $G$-graded algebras} $\psi:\cA\to \cB$ is a homomorphism of algebras 
that preserves degrees: $\psi(\cA_g)\subset \cB_g$ for all $g\in G$. 
In particular, $\cA$ and $\cB$ are \emph{graded-isomorphic} if there exists an isomorphism of graded algebras $\cA\to \cB$. 
A $G$-graded algebra $\cA$ is called \emph{graded-simple}, or \emph{simple as a graded algebra}, 
if $\cA^2\ne 0$ and $\cA$ has no graded ideals different from $0$ and $\cA$. 
Among these, we are especially interested in studying the \emph{graded-division} algebras, which are defined by the property 
that every nonzero homogeneous element is invertible (in an appropriate sense). 
A typical example of a graded-division algebra is the group algebra $\FF G$, which is naturally $G$-graded. 
More generally, the twisted group algebra $\FF^\gamma G$ (where $\gamma:G\times G\to\FF^\times$ is a $2$-cocycle)
is a graded-division associative algebra.
A graded algebra $\cA$ is said to be \emph{graded-central} if the only degree-preserving elements of its centroid 
are the scalar operators. We recall that the \emph{centroid} $C(\cA)$ of an algebra $\cA$ consists of all elements of 
$\End_\FF(\cA)$ that commute with the operators of left and right 
multiplication by the elements of $\cA$. If $\cA$ is unital, we can identify $C(\cA)$ with the center of $\cA$
(i.e., the set of elements that commute and associate with all elements of $\cA$).

For abelian $G$, a bridge between simple and graded-simple algebras is given by the \emph{loop algebra} 
construction in \cite{ABFP} (which appeared in \cite{BSZ01} in a special case). 
This construction generalizes the so-called multiloop algebras in Lie theory and, 
if $\FF$ is algebraically closed, yields a classification of graded-central-simple (i.e., graded-central and graded-simple) 
algebras in a given class provided that a classification of gradings is known for central simple algebras in this class 
(see Section~\ref{s:LC} for details). 
In this paper, we will assume all grading groups abelian, but $\FF$ will be the field of real numbers $\RR$. 
We will extend the loop construction to this setting. We do not use the topology of $\RR$, so our results are valid over 
any real closed field $\FF$ (with $\FF[\sqrt{-1}]$ playing the role of $\CC$).
 
Graded-division associative algebras that are finite-dimensional and graded-cent\-ral over $\RR$ have been classified 
in \cite{BZGRDA} up to equivalence (see Section~\ref{s:back}). That work was preceded by \cite{BZGDS}, 
where the \emph{division gradings} on finite-dimensional simple real associative algebras 
(i.e., the gradings that turn them into graded-division algebras) were classified up to equivalence, 
and \cite{ARE}, where these gradings were classified up to isomorphism as well as up to equivalence. 

In this paper, we will focus on alternative and Jordan algebras. For the background on these, we refer the reader to 
\cite{ZSSS} and \cite{J}.

Recall that an algebra is said to be \emph{alternative} if the associator $(x,y,z)\bydef (xy)z-x(yz)$ is an alternating 
function of the variables $x,y,z$ or, equivalently, the left and right alternative identities hold: $x(xy)=x^2y$ and $(yx)x=yx^2$.
By Artin's Theorem, this implies that any subalgebra generated by two elements is associative.
The definition of inverses in a unital alternative algebra is the same as in the associative case, 
and the set of invertible elements is closed under multiplication (it is a so-called \emph{Moufang loop}).
Kleinfeld proved that any simple alternative algebra is either associative or an octonion algebra over a field.

Group gradings on octonion algebras over any field were described in \cite{E} (see also \cite{EKmon}). 
(Here, as in the case of simple Lie algebras, there is no loss of generality in assuming the grading group abelian 
because the elements of the support always commute.) 
A classification of gradings up to isomorphism is given in \cite{EK_G2D4}. 
We will use our extension of the loop algebra construction to derive a classification up to isomorphism 
of graded-simple alternative algebras that are graded-central over $\RR$ and not associative (Theorem~\ref{th:alternative_iso}).
Among these, we will also classify the graded-division algebras up to equivalence (Theorem~\ref{th:alternative_equ} and Corollary~\ref{cor:alternative_equ}).

Let $\FF$ be a field of characteristic different from $2$. A \emph{Jordan algebra} over $\FF$ is a commutative algebra in which 
the identity $(xy)x^2=x(yx^2)$ holds or, equivalently, the operators $L_x$ and $L_{x^2}$ commute for any element $x$, 
where $L_x(y)\bydef xy$. Any associative algebra $\cA$ gives rise to a Jordan algebra, denoted $\cA^{(+)}$, which 
coincides with $\cA$ as a vector space, but whose multiplication is given by $x\circ y\bydef\frac12(xy+yx)$.
The Jordan algebras that can be embedded into $\cA^{(+)}$ for some associative algebra $\cA$ are called \emph{special}.
Zelmanov proved that any simple Jordan algebra is either special or an Albert algebra (which has dimension $27$ over its center). 

Abelian group gradings on finite-dimensional simple special Jordan algebras 
over an algebraically closed field of characteristic $0$ were described in \cite{BS,BSZ05} and 
classified up to isomorphism and up to equivalence in \cite{EKmon} for any characteristic different from $2$. 
The finite-dimensional central simple special Jordan algebras of degree $\ge 3$ over $\RR$ can be dealt with following the approach of
\cite{BKR_L}, where a classification of gradings for classical central simple Lie algebras over $\RR$ was obtained by
transferring the problem to associative algebras with involution. Here we will consider degree $2$, i.e., Jordan algebras of bilinear forms.
We will apply our extension of the loop construction to classify the corresponding graded-simple algebras up to isomorphism (Theorem~\ref{th:J_deg2_iso}),
and characterize those that are graded-division algebras (Corollary~\ref{cor:J_deg2_div}).

It should be noted that abelian group gradings on the Albert algebra over $\CC$ were classified in \cite{DM}
and over any algebraically closed field of characteristic different from $2$ in \cite{EK1} 
(see also \cite{EKmon}). In particular, it has a division grading by the group $\ZZ_3^3$ (which appeared in \cite{Griess}) if the characteristic 
is not $3$. However, none of the three real forms of the complex Albert algebra admits a division grading, as follows from 
the classification of fine gradings in \cite{CDM} (since the identity component always contains nontrivial idempotents). 

The paper is structured as follows. In Section~\ref{s:back}, we review some terminology and facts about gradings. Section~\ref{s:LC} is devoted 
to the loop construction over $\RR$: Theorem~\ref{th:loop1} establishes the existence of a ``loop model'' for a graded-simple algebra 
and Corollary~\ref{cor:loop} answers the question to what extent it is unique. 
(The answer turns out to be the same as for the case of ``split centroid'' in \cite{ABFP}.)
In fact, we also give a description of isomorphisms between graded-simple algebras, which can be conveniently expressed as equivalence of certain groupoids
(Theorem~\ref{th:loop2}). Sections~\ref{s:alternative} and \ref{s:Jordan} deal with our applications of the loop construction to alternative and Jordan 
algebras, respectively.

Throughout the paper, an unadorned symbol $\ot$ denotes the tensor product over the ground field, which is usually $\RR$. 
We will also use the following notation for abelian groups: for any $m\in\NN$, we have the group homomorphism $[m]:G\to G$ sending 
$g\mapsto g^m$, and the associated subgroups of $G$: $G_{[m]}=\ker[m]$ and $G^{[m]}=\im[m]$. The order of an element 
$g\in G$ will be denoted by $o(g)$.

\section{Background on gradings}\label{s:back}

\subsection{Gradings induced by a group homomorphism}\label{ss:induced_grading}

Given a $G$-grading $\Gamma:\;\cA=\bigoplus_{g\in G}\cA_g$ and a group homomorphism $\alpha:G\to G'$, 
we can define a $G'$-grading on $\cA$ as follows: for any $g'\in G'$, set 
$\cA'_{g'}\bydef\bigoplus_{g\in\alpha^{-1}(g')}\cA_g$.
Then $\cA=\bigoplus_{g'\in G'}\cA'_{g'}$ is a $G'$-grading, which will be denoted by ${}^\alpha\Gamma$. 
Thus, if $\alpha$ is fixed, every $G$-graded algebra $\cA$ can be considered as a $G'$-graded algebra, which 
will be denoted by ${}^\alpha\cA$ or just $\cA$ if there is no risk of confusion. 
In particular, if $G$ is a subgroup of $G'$ then every $G$-graded algebra is also $G'$-graded, 
with the same support and homogeneous components. 
More generally, if $\alpha$ is injective on the support then ${}^\alpha\Gamma$ has the same components as $\Gamma$, 
but relabeled through $\alpha$. Otherwise, ${}^\alpha\Gamma$ is a so-called \emph{proper coarsening} of $\Gamma$.

If $\psi:\cA\to \cB$ is a homomorphism of $G$-graded algebras and we consider $\cA$ and $\cB$ as $G'$-graded 
through $\alpha$ as above, then $\psi$ is also a homomorphism of $G'$-graded algebras. Thus, $\alpha$ determines 
a functor $F_\alpha$ from the category of $G$-graded algebras to the category of $G'$-graded algebras: 
$F_\alpha(\cA)={}^\alpha\cA$ and $F_\alpha(\psi)=\psi$.

\subsection{Weak isomorphism and equivalence}\label{ss:wiso_equiv}

We call a $G$-graded algebra $\cA$ and a $G'$-graded algebra $\cB$ \emph{graded-weakly-isomorphic} if 
${}^\alpha\cA$ is graded-isomorphic to $\cB$ for some group isomorphism $\alpha:G\to G'$, 
i.e., there exists an isomorphism of algebras $\psi:\cA\to\cB$ such that $\psi(\cA_g)=\cB_{\alpha(g)}$ for all $g\in G$. 

There is a more general kind of relabeling: we say that a $G$-graded algebra $\cA$ and a $G'$-graded algebra $\cB$ are 
\emph{graded-equivalent} if there exists an isomorphism of algebras $\psi:\cA\to\cB$ such that, for each $g\in G$, 
there is $g'\in G'$ such that $\psi(\cA_g)=\cB_{g'}$. 

The support of a division grading on an alternative (in particular, associative) algebra is a subgroup. 
If, for two such graded-division algebras, we replace the grading groups by the supports, then equivalence is the same as weak isomorphism.

\subsection{Graded modules}

Let $\cA$ be a $G$-graded associative algebra over a field $\FF$. A $G$-graded vector space $\cV$ over $\FF$, i.e., a vector space 
with a fixed decomposition $\cV=\bigoplus_{g\in G}\cV_g$, is a \emph{graded left $\cA$-module} if it is equipped 
with a left $\cA$-action such that $\cA_g\cV_h\subset\cV_{gh}$ for all $g,h\in G$.
The concepts of \emph{homogeneous element}, \emph{support}, \emph{isomorphism}, etc., are defined in the same way as for graded algebras.

If $\cA$ is a graded-division algebra then every graded $\cA$-module is free. More precisely, it admits a basis consisting of 
homogeneous elements, the $\FF$-span $V$ of such a basis is a $G$-graded vector space, and $\cV\cong\cA\otimes V$ as graded $\cA$-modules,
where the $G$-grading on the tensor product of $G$-graded vector spaces $U$ and $V$ is defined by setting $\deg(u\otimes v)\bydef\deg(u)\deg(v)$.

\section{Loop construction}\label{s:LC} 

\subsection{Centroid of a graded algebra}

Given a $G$-graded algebra $\cB$ over a field $\FF$, a linear map $f:\cB\to \cB$ is said to be 
homogeneous of degree $g\in G$ if $f(\cB_h)\subset \cB_{gh}$ for all $h\in G$. 
In particular, the set of all elements of the centroid $C(\cB)$ that satisfy this condition will be denoted by $C(\cB)_g$. 

If $\psi:\cB\to\cB'$ is an isomorphism of $G$-graded algebras then we have an isomorphism of the centroids 
$C(\psi):C(\cB)\to C(\cB')$ defined by $C(\psi)(c)=\psi\circ c\circ\psi^{-1}$ for all $c\in C(\cB)$. 
(If the algebras are unital and we identify the centroids with the centers then $C(\psi)$ is just a restriction of $\psi$.)
It is clear that $C(\psi)$ maps $C(\cB)_g$ onto $C(\cB')_g$ for any $g\in G$.

Suppose $\cB$ is graded-simple. Then, by \cite[Proposition 2.16]{BN}, we have $C(\cB)=\bigoplus_{g\in G}C(\cB)_g$, 
$C(\cB)$ is a commutative associative graded-division algebra (a \emph{graded-field}), and $\cB$ is a graded $C(\cB)$-module (hence free).   
Moreover, the graded-simple algebra $\cB$ is simple if and only if $C(\cB)$ is a field \cite[Lemma 4.2.2]{ABFP}. 
In any case, the identity component $C(\cB)_e$ is a field containing $\FF$, and $\cB$ is a graded-central algebra 
over $C(\cB)_e$, with exactly the same homogeneous components, but now viewed as $C(\cB)_e$-subspaces. 
For this reason, it is natural to restrict ourselves to the graded-central case.

So, suppose $\cB$ is a graded-central-simple algebra over $\FF$. Then the homogeneous components of $C(\cB)$ are 
one-dimensional: $C(\cB)_h=\FF u_h$ for every $h\in H$ where $H$ is the support of $C(\cB)$, which is a subgroup of $G$, 
and $u_h$ is an invertible element. 
Thus, $C(\cB)$ is a twisted group algebra of $H$, with its natural $H$-grading considered as a $G$-grading. 
Following \cite{ABFP}, we will say that the centroid is \emph{split} if $C(\cB)$ is graded-isomorphic 
to the group algebra $\FF H$. 
This is always the case if $\FF$ is algebraically closed \cite[Lemma 4.3.8]{ABFP}, 
but not if $\FF$ is the field of real numbers. For example, $C(\cB)$ can be the field $\CC$ with a nontrivial grading: 
$\CC=\CC_e\oplus\CC_h$ where $\CC_e=\RR$, $\CC_h=\RR i$, and $h$ is an element in $G$ of order $2$. 
The loop construction in \cite{ABFP}, which we are now going to review, produces graded-central-simple algebras 
with split centroid. We will extend this construction to cover all possible centroids in the case $\FF=\RR$ 
(or any real closed field).

\subsection{Loop algebras with a split centroid}

Let $\pi:G\to\bG$ be an epimorphism of abelian groups and let $H$ be the kernel of $\pi$. 
As discussed in Subsection \ref{ss:induced_grading}, any $G$-graded algebra $\cB$ can be regarded as a $\bG$-graded algebra,
by setting $\cB_\bg=\bigoplus_{g\in\pi^{-1}(\bg)}\cB_g$ for any $\bg\in\bG$, and this gives us a functor 
from the category of $G$-graded algebras to the category of $\bG$-graded algebras. 
The loop construction is the right adjoint of this functor (see \cite[Remark 3.3]{EK_loop}), defined as follows. 
For a given $\bG$-graded algebra $\cA$, consider the tensor product $\cA\ot\FF G$.  
The loop algebra $L_\pi(\cA)$ is the following subalgebra of $\cA\otimes\FF G$:
\begin{equation*}
L_\pi(\cA)=\bigoplus_{g\in G} \cA_{\pi(g)}\ot g, 
\end{equation*}
which is naturally $G$-graded: $L_\pi(\cA)_g=\cA_{\pi(g)}\ot g$.
If $\psi:\cA\to\cA'$ is a homomorphism of $\bG$-graded algebras then the linear map $L_\pi(\psi):L_\pi(\cA)\to L_\pi(\cA')$ 
that sends $a\otimes g\mapsto\psi(a)\otimes g$, for all $a\in\cA_{\pi(g)}$ and $g\in G$, is a homomorphism of $G$-graded 
algebras.

Note that $L_\pi(\cA)$ is unital if and only if so is $\cA$. Also, if $\mathfrak{V}$ is a variety of algebras, i.e., 
a class defined by polynomial identities, and $\FF$ is infinite, then $L_\pi(\cA)$ belongs to $\mathfrak{V}$ 
if and only if so does $\cA$. Indeed, if $\cA\in\mathfrak{V}$ then $\cA\ot\FF G\in\mathfrak{V}$ 
(since $\FF G$ is commutative, see e.g. \cite[p.~10]{GZ}) and hence $L_\pi(\cA)\in\mathfrak{V}$; 
the converse follows from the fact that $\cA$ is a quotient of $L_\pi(\cA)$ (by means of $a\otimes g\mapsto a$).

One of the assertions of the ``Correspondence Theorem'' (see \cite[Theorem 7.1.1]{ABFP}) is that any $G$-graded algebra 
$\cB$ that is graded-central-simple and has a split centroid is graded-isomorphic to $L_\pi(\cA)$ 
for some central simple algebra $\cA$ equipped with a $\bG$-grading where $\bG=G/H$, $H$ is the support of $C(\cB)$, 
and $\pi:G\to\bG$ is the natural homomorphism. Note that the condition on the centroid of $\cB$ is necessary because,
for any central simple algebra $\cA$ with a $\bG$-grading, $L_\pi(\cA)$ is a graded-central-simple algebra whose 
centroid can be identified with $L_\pi(\FF)=\FF H$, where $h\in H$ acts on $L_\pi(\cA)$ as follows:
\[
h(a\otimes g)=a\otimes hg\text{ for all } a\in\cA_{\pi(g)},\: g\in G.
\]
We will generalize the loop construction to produce $G$-graded algebras with nonsplit centroids over $\RR$. 

Another assertion of the ``Correspondence Theorem'' tells us that the $G$-graded algebra $\cB$ determines the 
$\bG$-graded algebra $\cA$ up to isomorphism and twist in the following sense. 
Fix a transversal for the subgroup $H$ in $G$, so we have a section 
$\xi:\bG\to G$ 
(which is not necessarily a group homomorphism). This defines a $2$-cocycle on $\bG$ with values in $H$, namely, 
\[
\sigma(\bg_1,\bg_2)=\xi(\bg_1)\xi(\bg_2)\xi(\bg_1\bg_2)^{-1}.
\]
Now, given a character $\lambda:H\to\FF^\times$ and a $\bG$-graded algebra $\cA$, we define $\cA^\lambda$ to be 
$\cA$ as a $\bG$-graded space, but with a twisted multiplication:
\begin{equation}\label{eq:twisted_multiplication}
a_1\ast a_2=\lambda(\sigma(\bg_1,\bg_2))a_1 a_2\text{ for all } a_1\in \cA_{\bg_1},\: a_2\in \cA_{\bg_2}.
\end{equation}
In the above formula, $\lambda\circ\sigma$ is a $2$-cocycle on $\bG$ with values in $\FF^\times$, so this is actually a
standard cocycle twist of a graded algebra. Note that the isomorhism class of $\cA^\lambda$ does not depend on the choice of 
the transversal. Also, if $\lambda$ extends to a character $G\to\FF^\times$ 
(for example, if $\FF$ is algebraically closed or if $H$ is a direct summand of $G$) 
then $\cA^\lambda$ is graded-isomorphic to $\cA$.

\begin{remark}\label{rem:mod_over_centroid}
As a graded $L_\pi(\FF)$-module, $L_\pi(\cA)\cong L_\pi(\FF)\otimes\underline{\cA}$ where $\underline{\cA}$ is 
$\cA$ regarded as a $G$-graded vector space with $\deg a=\xi(\bg)$ for all nonzero $a\in\cA_\bg$, $\bg\in\bG$. 
An isomorphism $L_\pi(\FF)\otimes\underline{\cA}\to L_\pi(\cA)$ is given by $h\otimes a\mapsto a\otimes h\xi(\bg)$, 
where we have identified $L_\pi(\FF)$ with $\FF H$.
In particular, if $\xi$ is a group homomorphism (which can happen if and only if $H$ is a direct summand of $G$) 
then $\underline{\cA}={}^\xi\cA$ is a $G$-graded algebra, and $L_\pi(\cA)\cong \FF H\otimes {}^\xi\cA$ as graded algebras.
\end{remark}

We are going to generalize, over the field $\RR$, the above two assertions of the ``Correspondence Theorem'' to 
graded-central-simple algebras whose centroid is not necessarily split. The following result will be useful:

\begin{lemma}\label{lm:2} 
If $\FF$ is algebraically closed then every graded-automorphism of the centroid $L_\pi(\FF)=\FF H$  
is induced by a graded-automorphism of $L_\pi(\cA)$.
\end{lemma}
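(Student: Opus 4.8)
The plan is to reduce the statement to a concrete computation with characters, exploiting that over an algebraically closed field $\FF$ every character of the finite-support subgroup of $H$ involved extends to all of $G$. First I would observe that the graded-automorphism group of $L_\pi(\FF)=\FF H$ (with its $G$-grading via $h\mapsto h$) consists precisely of the maps $u_h\mapsto\mu(h)u_h$ for characters $\mu:H\to\FF^\times$; this is because a graded-automorphism must send each one-dimensional component $\FF u_h$ to itself (by degree considerations, since $u_h\in L_\pi(\FF)_h$ and the grading on $\FF H$ is by $H\subset G$) and must respect $u_{h_1}u_{h_2}=u_{h_1h_2}$, which forces $\mu$ to be multiplicative. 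So it suffices, given a character $\mu:H\to\FF^\times$, to produce a graded-automorphism $\Psi$ of $L_\pi(\cA)$ whose induced action on the centroid is multiplication by $\mu(h)$ on the component of degree $h$.

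Next I would construct $\Psi$ explicitly. Since $\FF$ is algebraically closed, $H$ is a direct summand of a finitely generated subgroup containing the relevant support, but more directly: every character $\mu:H\to\FF^\times$ extends to a character $\tla:G\to\FF^\times$ (this uses divisibility of $\FF^\times$, i.e. injectivity of $\FF^\times$ as an abelian group, applied to the inclusion $H\hookrightarrow G$). Given such an extension $\tla$, define $\Psi:L_\pi(\cA)\to L_\pi(\cA)$ on homogeneous components by $\Psi(a\otimes g)=\tla(g)\,(a\otimes g)$ for $a\in\cA_{\pi(g)}$, $g\in G$. This is clearly $\FF$-linear, bijective, and degree-preserving. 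It is a homomorphism because for $a_1\otimes g_1$ and $a_2\otimes g_2$ we have $\Psi((a_1\otimes g_1)(a_2\otimes g_2))=\Psi(a_1a_2\otimes g_1g_2)=\tla(g_1g_2)(a_1a_2\otimes g_1g_2)=\tla(g_1)\tla(g_2)(a_1\otimes g_1)(a_2\otimes g_2)=\Psi(a_1\otimes g_1)\Psi(a_2\otimes g_2)$, using multiplicativity of $\tla$. So $\Psi$ is a graded-automorphism of $L_\pi(\cA)$.

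Finally I would check that $\Psi$ induces the correct automorphism on the centroid. Recall $h\in H$ acts on $L_\pi(\cA)$ by $h(a\otimes g)=a\otimes hg$. The induced automorphism $C(\Psi)$ sends this operator to $\Psi\circ h\circ\Psi^{-1}$, which on $a\otimes g$ gives $\Psi(h(\tla(g)^{-1}(a\otimes g)))=\Psi(\tla(g)^{-1}(a\otimes hg))=\tla(hg)\tla(g)^{-1}(a\otimes hg)=\tla(h)(a\otimes hg)=\tla(h)\,h(a\otimes g)$. Hence $C(\Psi)$ acts on $C(\cB)_h=\FF u_h$ as multiplication by $\tla(h)=\mu(h)$, as required. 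Since $\mu$ was arbitrary, every graded-automorphism of $\FF H$ is realized.

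The only genuine point requiring care — and the part I would flag as the main obstacle, mild as it is — is the extension of the character $\mu:H\to\FF^\times$ to $G$; this is where algebraic closedness of $\FF$ (ensuring $\FF^\times$ is a divisible, hence injective, abelian group, so that $\Hom(-,\FF^\times)$ is exact) is used in an essential way, and it is precisely the step that fails over $\RR$, motivating the generalization in the rest of the section. Everything else is a routine verification.
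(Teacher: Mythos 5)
Your proposal is correct and follows essentially the same route as the paper: identify the graded-automorphism as $u_h\mapsto\mu(h)u_h$ for a character $\mu:H\to\FF^\times$, extend $\mu$ to a character of $G$ (the paper does this without comment; your justification via divisibility of $\FF^\times$ is the right one), and scale each component $\cA_{\pi(g)}\otimes g$ by the extended character. The verification that the induced map on the centroid is the given one matches the paper's computation.
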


\begin{proof}
Any automorphism of $\FF H$ as a graded algebra has the form: $\psi_0(h)=\mu(h)h$, for any $h\in H$, 
where $\mu:H\to\FF^\times$ is a character. We extend $\mu$ to a character of $G$ and set $\psi(a\ot g)=\mu(g)a\ot g$ for  
all $g\in G$ and $a\in \cA_{\pi(g)}$. This is an automorphism of $L_\pi(\cA)$ and, for any $c\in\FF H$, we have  
$\psi(c(a\ot g))=\psi_0(c)(\psi(a\ot g))$, which means that $C(\psi)=\psi_0$.
\end{proof}

\subsection{Real loop algebras with a nonsplit centroid}\label{sRLANSC} 

From now on, assume that the ground field is $\RR$ (or, more generally, any real closed field).
Let $\pi:G\to\bG$ be an epimorphism of abelian groups, $H=\ker\pi$.
We will apply the loop construction from the previous subsection over the algebraic closure $\CC$ and 
then define a suitable Galois descent to $\RR$. Denote the generator of $\Gal(\CC/\RR)$ by $\iota$, i.e., 
$\iota(z)=\bar{z}$ for all $z\in\CC$.

Let $\chi:G\to\CC^\times$ be a character with values in the unit circle $U\subset\CC$ 
(which is automatic for finite groups). Then the $\RR$-linear operator on $\CC G=\CC\otimes\RR G$ defined by 
$z\otimes g\mapsto \bar{z}\chi(g)\otimes g$ is involutive, so we obtain a semilinear action of $\Gal(\CC/\RR)$ on $\CC G$ 
and hence a real form of $\CC G$ as a $G$-graded algebra. (In fact, any such real form is obtained in this way.) 
The homogeneous component of degree $g$ is $\RR u_g$, where $u_g=z_g\otimes g$ and $z_g\in\CC^\times$ is any 
element satisfying $z_g/\bar{z}_g=\chi(g)$. We have 
\[
u_{g_1}u_{g_2}=\gamma(g_1,g_2)u_{g_1 g_2}\text{ for all }g_1,g_2\in G,
\]
so our real form is the twisted group algebra $\cD=\RR^\gamma G$ where the $2$-cocycle $\gamma:G\times G\to\RR^\times$ 
is given by $\gamma(g_1,g_2)=z_{g_1}z_{g_2}/z_{g_1g_2}$. We will choose $z_g$ to be in the unit circle, i.e.,
a square root of $\chi(g)$. Then $\gamma$ takes values in $\{\pm 1\}$.

Now let $\cA$ be a $\bG$-graded algebra over $\RR$. Similarly to the above, $\cA\ot\CC\ot\RR G$ acquires a semilinear action
of $\Gal(\CC/\RR)$ defined by $\iota\cdot(a\ot z\ot g)=a\ot\bar{z}\chi(g)\ot g$, which restricts to a degree-preserving 
action on $L_\pi(\cA\ot\CC)=\bigoplus_{g\in G}\cA_{\pi(g)}\ot\CC\ot\RR G$. 
As a result, we obtain the following $G$-graded $\RR$-form of $L_{\pi}(\cA\ot\CC)$:
\[
L_\pi^\chi(\cA)\bydef\{ x\in L_{\pi}(\cA\ot\CC) \mid \iota\cdot x=x\}=\bigoplus_{g\in G} \cA_{\pi(g)}\ot u_g
\subset \cA\ot\RR^\gamma G.
\]
By construction, $L^\chi_\pi(\cA)\ot\CC\cong L_\pi(\cA\ot\CC)$. It follows that $L^\chi_\pi(\cA)$ is unital 
if and only if so is $\cA$, and $L^\chi_\pi(\cA)$ belongs to a variety $\mathfrak{V}$ if and only if so does $\cA$. 
Indeed, we already know that $L_\pi$ preserves these properties, and they are not affected by field extensions.
By the same argument, if $\cA$ is central simple then $L^\chi_\pi(\cA)$ is graded-central-simple. 

The action of $\Gal(\CC/\RR)$ passes on to the centroid of $L_\pi(\cA\otimes\CC)$: 
$(\iota\cdot c)(x)=\iota\cdot c(\iota^{-1}\cdot x)$, for any $x\in L_\pi(\cA\ot\CC)$ and $c$ in the centroid. 
Assuming that $\cA$ is central simple, we have identified the centroid with $\CC H=\CC\ot\RR H$. 
With this identification, we have $\iota\cdot(z\ot h)=\bar{z}\chi(h)\ot h$ for all $z\in\C$ and $h\in H$. 
Therefore, the centroid of $L^\chi_\pi(\cA)$ can be identified with $\cD_H\bydef\bigoplus_{h\in H}\cD_h$, 
which is a graded subalgebra of the real form $\cD$ of $\CC G$ defined above. Note that $\cD_H=L_\pi^\chi(\RR)$. 

We are now ready to extend a part of the ``Correspondence Theorem'' to the case of nonsplit centroid.

\begin{theorem}\label{th:loop1}
Let $\cB$ be a $G$-graded algebra that is graded-central-simple over $\RR$. 
Let $H$ be the support of the centroid $C(\cB)$  and $\pi:G\to\bG=G/H$ be the natural homomorphism. 
Then, for some character $\chi:G\to U\subset\CC^\times$, there exists a central simple algebra $\cA$ over $\R$ and a
$\bG$-grading on $\cA$ such that $\cB\cong L^\chi_\pi(\cA)$ as $G$-graded algebras. Moreover, we can take
any character satisfying $L^\chi_\pi(\RR)\cong C(\cB)$.
\end{theorem}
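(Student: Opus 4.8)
The strategy is to reduce to the algebraically closed case and then descend, in the spirit of Subsection~\ref{sRLANSC}. Set $\cB_\CC\bydef\cB\otimes_\RR\CC$, a $G$-graded algebra over $\CC$ on which $\Gal(\CC/\RR)=\langle\iota\rangle$ acts semilinearly via $\iota=\id_\cB\otimes(z\mapsto\bar z)$, with $(\cB_\CC)^\iota=\cB$ and with $\iota$ preserving the $G$-grading. First I would verify that $\cB_\CC$ is again graded-central-simple, now over $\CC$. Here $\iota$ acts semilinearly on $C(\cB_\CC)$ with fixed part $C(\cB)$, whence, by Galois descent on the centroid, $C(\cB_\CC)=C(\cB)\otimes_\RR\CC$ is a graded-field with support $H$ and identity component $C(\cB)_e\otimes\CC=\CC$; and if $I$ were a nonzero proper graded ideal of $\cB_\CC$, then $I\cap\iota(I)$ and $I+\iota(I)$ would be $\iota$-stable graded ideals, hence complexifications of graded ideals of $\cB$, so graded-simplicity of $\cB$ would force $\cB_\CC=I\oplus\iota(I)$ as a direct product of algebras, producing a nonscalar idempotent in $C(\cB_\CC)_e=\CC$ --- impossible.

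Next I would extract the character $\chi$ from the centroid. Since $C(\cB)$ is a \emph{commutative} graded-division algebra with support $H$ and $C(\cB)_e=\RR$, its complexification $C(\cB_\CC)$ is graded-isomorphic to a twisted group algebra $\CC^\gamma H$ with a \emph{symmetric} cocycle $\gamma$; because $\CC^\times$ is divisible (hence an injective $\ZZ$-module, so $\mathrm{Ext}^1_\ZZ(H,\CC^\times)=0$), $\gamma$ is a coboundary and $C(\cB_\CC)\cong\CC H$ as $H$-graded algebras. Thus $C(\cB)$ is a real form of $\CC H$, and, as recalled in Subsection~\ref{sRLANSC}, every such real form has the shape $\cD_H=L^\chi_\pi(\RR)$ for a character $H\to U$, which extends to a character $\chi\colon G\to U$ because $U$ is divisible. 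I fix such a $\chi$; the rest of the argument uses only that $\chi$ realizes, on $C(\cB_\CC)$, the real structure coming from $C(\cB)\hookrightarrow C(\cB_\CC)$, so indeed any character with $L^\chi_\pi(\RR)\cong C(\cB)$ may be used.

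Now apply the complex Correspondence Theorem \cite[Theorem~7.1.1]{ABFP}: since $\cB_\CC$ is graded-central-simple over $\CC$ with split centroid $\cong\CC H$, there are a $\bG$-graded central simple $\CC$-algebra $\cC$ and a graded isomorphism $F\colon\cB_\CC\to L_\pi(\cC)$. Using Lemma~\ref{lm:2} I would adjust $F$ by a graded automorphism of $L_\pi(\cC)$ so that $C(F)$ carries the real structure of $C(\cB_\CC)$ to the standard conjugation $z\otimes h\mapsto\bar z\chi(h)\otimes h$ on $C(L_\pi(\cC))=\CC H$. Then $\tilde\iota\bydef F\circ\iota\circ F^{-1}$ is a semilinear involution of $L_\pi(\cC)$ preserving the $G$-grading and restricting to that conjugation on the centroid. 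Writing $\tilde\iota(c\otimes g)=\theta_g(c)\otimes g$ with $\theta_g\colon\cC_{\pi(g)}\to\cC_{\pi(g)}$ conjugate-linear, compatibility with the centroid yields $\theta_{hg}=\chi(h)\theta_g$ for $h\in H$, and multiplicativity of $\tilde\iota$ yields $\theta_{g_1g_2}(c_1c_2)=\theta_{g_1}(c_1)\theta_{g_2}(c_2)$. One then checks that $\tau(c)\bydef\chi(g)^{-1}\theta_g(c)$ for $c\in\cC_\bg$ and any $g\in\pi^{-1}(\bg)$ defines a conjugate-linear, degree-preserving algebra involution $\tau$ of $\cC$ (well-definedness from $\theta_{hg}=\chi(h)\theta_g$, involutivity from $|\chi(g)|=1$); crucially, no cocycle twist of $\cC$ is needed, precisely because the $\chi$-correction is available. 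Finally, computing with the elements $u_g=z_g\otimes g$, $z_g=\sqrt{\chi(g)}$, identifies the fixed real form $(L_\pi(\cC))^{\tilde\iota}$ with $L^\chi_\pi(\cA)$, where $\cA\bydef\cC^\tau$ is a $\bG$-graded algebra that is central simple over $\RR$; and since $F$ intertwines $\iota$ and $\tilde\iota$, it restricts to an isomorphism $\cB=(\cB_\CC)^\iota\to(L_\pi(\cC))^{\tilde\iota}=L^\chi_\pi(\cA)$, as desired.

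I expect the descent step of the third paragraph to be the main obstacle: one must ensure that $\tilde\iota$ can be brought into the standard shape relative to $\chi$ and, above all, that the resulting $\tau$ is a semilinear involution of $\cC$ itself and not merely of some cocycle twist of it. This is exactly where the matching choice of $\chi$, the grading-preservation of $\iota$, and the unit-circle normalization $|\chi(g)|=1$ must be used together; the remaining ingredients are either standard structure theory of graded-central-simple algebras or direct transcriptions of Subsection~\ref{sRLANSC}.
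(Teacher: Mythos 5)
Your proposal is correct and follows essentially the same route as the paper's proof: complexify, apply the ABFP Correspondence Theorem over $\CC$, read $\chi$ off the induced conjugation on the centroid $\CC H$, descend the degree-preserving semilinear involution to a conjugate-linear graded involution of the complex central simple algebra (your $\tau=\chi(g)^{-1}\theta_g$ is exactly the paper's $\vp_g$), and invoke Lemma~\ref{lm:2} to justify the freedom in the choice of $\chi$. The only differences are presentational: you supply some extra verifications (graded-central-simplicity of $\cB\ot\CC$ and splitness of its centroid) that the paper leaves implicit or cites from \cite{ABFP}.
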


\begin{proof}
Since $\CC$ is algebraically closed, the centroid of $\cB\ot\CC$ is split, hence, by the ``Correspondence Theorem'' 
from \cite{ABFP}, there exists a central simple complex algebra $\wt{\cA}$ with a $\bG$-grading such that 
$\cB\ot\CC\cong L_\pi(\wt{\cA})$. Hence, $\cB$ is isomorphic to an $\RR$-form of $L_\pi(\wt{\cA})$, and we need to 
determine all semilinear $\Gal(\CC/\RR)$-actions on the latter (up to isomorphism) to recover $\cB$ by Galois decent. 

First consider the induced $\Gal(\CC/\RR)$-action on the centroid $\CC H$ of $L_\pi(\wt{\cA})$.
As already observed, any automorphism of $\CC H$ as a graded $\CC$-algebra is of the form $zh\mapsto z\chi(h)h$, 
where $\chi:H\to\C^\times$ is a character. It follows that any degree-preserving $\CC$-antilinear automorphism  
has the form $zh\mapsto\bar{z}\chi(h)h$. In particular, we have $\iota\cdot(zh)=\bar{z}\chi(h)h$ for some $\chi$,
which must take values in the unit circle $U$ because $\iota^2=\id$. Since $U$ is a divisible abelian group, 
we can extend $\chi$ to the whole of $G$, with all values still in $U$. 

Since the action of $\Gal(\CC/\RR)$ on $L_\pi(\wt{\cA})=\bigoplus_{g\in G} \wt{\cA}_{\pi(g)}\ot g$ is degree-preserving, 
we can write, for any $g\in G$, $\iota\cdot(a\ot g)=\vp_g(a)\chi(g)\ot g$ for all $a\in \wt{\cA}_{\pi(g)}$. 
It is clear that $\vp_g:\wt{\cA}_{\pi(g)}\to \wt{\cA}_{\pi(g)}$ is a $\CC$-antilinear map. 
Also, $\chi(g)\overline{\chi(g)}=1$ implies $\vp_g^2=\id$. 
We claim that $\vp_g$ depends only on $\pi(g)$. Indeed, for all $a\in\wt{\cA}_{\pi(g)}$ and $h\in H$, we can consider $h$ 
as an element of the centroid and compute:
\[
\begin{split}
\vp_{gh}(a)\chi(gh)\ot hg&=\iota\cdot(a\ot hg)=\iota\cdot\big(h(a\ot g)\big)=(\iota\cdot h)\big(\iota\cdot(a\ot g)\big)\\
&=\chi(h)h\big(\vp_g(a)\chi(g)\ot g\big)=\chi(h)\chi(g)\vp_g(a)\ot hg.
\end{split}
\]
Thus, $\vp_{hg}=\vp_g$, which proves the claim. 

Define $\vp:\wt{\cA}\to\wt{\cA}$ by setting $\vp|_{\wt{\cA}_{\pi(g)}}=\vp_g$, for any $g\in G$. 
Clearly, $\vp$ is $\CC$-antilinear and $\vp^2=\id$. Moreover, $\vp$ is an isomorphism:
\[
\begin{split}
\big(\vp_g(a)\chi(g)\ot g\big)\big(\vp_{g'}(a')\chi(g')\ot g'\big)
&=\big(\iota\cdot(a\ot g)\big)\big(\iota\cdot(a'\ot g')\big)=\iota\cdot(aa'\ot gg')\\
&=\vp_{gg'}(aa')\chi(gg')\ot gg',
\end{split}
\]
for all $g,g'\in G, a,a'\in\wt{\cA}$, so $\vp_g(a)\vp_{g'}(a')=\vp_{gg'}(aa')$, as claimed.

Therefore, $\vp$ gives a Galois descent for $\wt{\cA}$. Let $\cA=\bigoplus_{\bg\in\bG}\cA_{\bg}$ where 
\[
\cA_{\bg}=\{ a\in \tA_{\bg}\mid\vp(a)=a\}.
\]
Then $\cA\ot\C\cong\wt{\cA}$ by means of the mapping $a\ot z\mapsto az$.
Finally, the algebra of fixed points of $\iota$ in $L_\pi(\wt{\cA})$ is the $\RR$-span of the set of all $az_g\ot g$ 
where $a\in \cA_{\pi(g)}$ and $z_g\in\C^\times$ satisfy $z_g/\bar{z}_g=\chi(g)$. 
This span is isomorphic to $L_\pi^\chi(\cA)$ under the isomorphism $L_\pi(\cA\ot\CC)\to L_\pi(\wt{\cA})$ 
sending $a\ot z\ot g$ to $az\ot g$.

For the last assertion, observe that, in view of Lemma \ref{lm:2}, the group of automorphisms of the graded $\CC$-algebra
$L_\pi(\wt{\cA})$ acts transitively on each isomorphism class of graded $\RR$-forms of the centroid $\CC H$. 
This allows us to adjust the $\Gal(\CC/\RR)$-action on $L_\pi(\wt{\cA})$ so that the induced action on the centroid 
is given by $\iota\cdot(zh)=\bar{z}\chi(h)h$ where $\chi$ is any character such that the corresponding graded $\RR$-form of 
$\CC H$ (spanned by the elements $z_h h$) is isomorphic to $C(\cB)$.
\end{proof}

The nonzero homogeneous elements of degree $g$ in $L_\pi^\chi(\cA)$ have the form $a\otimes u_g$ where $0\ne a\in\cA_{\pi(g)}$, so the support of 
$L_\pi^\chi(\cA)$ is the inverse image under $\pi$ of the support of $\cA$. Remark \ref{rem:mod_over_centroid} has the following analog:

\begin{remark}\label{rem:mod_over_centroid_nonsplit}
As a graded $L_\pi^\chi(\FF)$-module, $L_\pi^\chi(\cA)\cong L_\pi^\chi(\FF)\otimes\underline{\cA}$ where $\underline{\cA}$
is $\cA$ regarded as a $G$-graded vector space with $\deg a=\xi(\bg)$ for all nonzero $a\in\cA_\bg$, $\bg\in\bG$. 
An isomorphism $L_\pi^\chi(\FF)\otimes\underline{\cA}\to L_\pi^\chi(\cA)$ is given by 
$u_h\otimes a\mapsto a\otimes u_h u_{\xi(\bg)}$, where we have identified $L_\pi^\chi(\FF)$ with $\cD_H$. 
In particular, if $\xi$ is a group homomorphism then we can choose $\chi$ to be trivial on the complement $\xi(\bG)$ 
of $H$ in $G$, which entails $L_\pi^\chi(\cA)\cong L_\pi^\chi(\FF)\otimes {}^\xi\cA$ as graded algebras.
\end{remark}

Since the elements $u_g$ are invertible, we also observe the following:

\begin{remark}\label{rem:div}
If $\cA$ is alternative (in particular, associative) or Jordan, 
then $\cA$ is a graded-division algebra with respect to $\bG$ if and only if 
$L_\pi^\chi(\cA)$ is a graded-division algebra with respect to $G$. 
\end{remark}

\begin{example}\label{ex:1}
If $\cB=\bigoplus_{g\in G}\cB_g$ is a graded-field with $\cB_e=\RR 1$ then $\cB\cong L_\pi^\chi(\RR)$ 
where $H=G$ and $\pi:G\to \{ 1\}$. If $G$ is finite, we can write it as a direct product of cyclic groups: 
$G=\langle g_1\rangle\times\cdots\times\langle g_s\rangle$, and choose roots of unity $z_j$, $j=1,\ldots,s$, 
satisfying $z_j^2=\chi(g_j)$. Then 
\[
L_\pi^\chi(\RR)=\RR[u_1]\ot\cdots\ot\RR[u_s]\text{ where }\deg u_j=g_j
\text{ and }u_j^{o(g_j)}=z_j^{o(z_j)}\in\{\pm 1\}.
\]
This is the description of such algebras given in \cite{BZGRDA}.
\end{example}

\begin{example}\label{ex:2}
Let $\cC$ be a real octonion algebra, i.e., either the octonion division algebra $\OO$ or 
the split octonion algebra $\OO_s$. 
Suppose $\pi:G\to\bG$ is an epimorphism of abelian groups and $\cC$ is given a $\bG$-grading. 
Then, for any character $\chi:G\to U$, the $G$-graded algebra $L_\pi^\chi(\cC)$ is a real alternative nonassociative 
algebra that is graded-central-simple. Conversely, every such algebra is graded-isomorphic to an algebra 
of the form $L_\pi^\chi(\cC)$ where $\cC$ is a simple alternative nonassociative algebra with centroid $\RR$, 
which is a real octonion algebra by Kleinfeld's Theorem \cite{K1,K2}(see also \cite{ZSSS}). 
\end{example}

\subsection{Isomorphism problem for real loop algebras}

We have shown how to obtain any $G$-graded algebra $\cB$ that is graded-central-simple over $\RR$ from a suitable 
central simple algebra $\cA$ equipped with a grading by a quotient group of $G$: $\cB\cong L_\pi^\chi(\cA)$.
Our goal is to classify the graded-central-simple algebras up to isomorphism, so we are going to investigate to what extent 
$\cA$ is determined by $\cB$, but first we want to fix the parameters $\pi$ and $\chi$. Recall that $\pi$ is the natural 
homomorphism $G\to\bG$, where $\bG=G/H$ and $H$ is the support of the centroid $C(\cB)$, so $\pi$ is determined by $\cB$.
As to $\chi$, it is a homomorphism $G\to U\subset\CC^\times$ such that $L_\pi^\chi(\RR)\cong C(\cB)$. 
Let us see to what extent $\chi$ is determined. 

\begin{lemma}\label{lm:1}
$L_\pi^\chi(\RR)\cong L_\pi^{\chi'}(\RR)$ if and only if $\chi|_{H_{[2]}}=\chi'|_{H_{[2]}}$.
\end{lemma}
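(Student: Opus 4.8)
The plan is to unwind what $L^\chi_\pi(\RR)$ and $L^{\chi'}_\pi(\RR)$ are, reduce the isomorphism question to a statement about characters of $H$, and settle that statement by an elementary argument.

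First I would observe that, since $\cA=\RR$ is concentrated in the trivial component of $\bG$, we have $L^\chi_\pi(\RR)=\bigoplus_{h\in H}\RR u_h=\cD_H$, and this algebra depends only on $\eta\bydef\chi|_H$: its multiplication is $u_{h_1}u_{h_2}=\gamma(h_1,h_2)u_{h_1h_2}$ with $\gamma(h_1,h_2)=z_{h_1}z_{h_2}z_{h_1h_2}^{-1}$, where $z_h$ is the chosen square root of $\chi(h)$ in the unit circle. As $\cD_H$ is supported inside $H$, any $G$-graded isomorphism between two such algebras is automatically $H$-graded. Writing $\eta'\bydef\chi'|_H$, the lemma thus reduces to two claims: (a) $L^\chi_\pi(\RR)\cong L^{\chi'}_\pi(\RR)$ iff $\eta/\eta'$ is a square in the character group $\Hom(H,U)$; and (b) a character $\theta\colon H\to U$ is a square in $\Hom(H,U)$ iff $\theta|_{H_{[2]}}$ is trivial.

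For (a) I would complexify. By construction $L^\chi_\pi(\RR)\ot\CC\cong L_\pi(\CC)=\CC H$ (the ordinary group algebra), so a graded $\RR$-algebra isomorphism $L^\chi_\pi(\RR)\to L^{\chi'}_\pi(\RR)$ extends to a graded $\CC$-algebra automorphism $\Phi$ of $\CC H$, which by the argument in the proof of Lemma~\ref{lm:2} has the form $\Phi(h)=\mu(h)h$ for a character $\mu\colon H\to\CC^\times$. The two $\RR$-forms are the fixed-point sets of the conjugate-linear algebra involutions $\tau_\eta\colon zh\mapsto\bar z\,\eta(h)\,h$ and $\tau_{\eta'}$, and $\Phi$ carries the first form onto the second exactly when $\Phi\circ\tau_\eta=\tau_{\eta'}\circ\Phi$; evaluating on $zh$ reduces this to $\mu(h)/\overline{\mu(h)}=\eta'(h)/\eta(h)$ for all $h$. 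Writing $\mu=\nu\omega$ with $\nu=|\mu|\colon H\to\RR_{>0}$ and $\omega\colon H\to U$ characters, the positive factor $\nu$ commutes with every $\tau$ and drops out, so the condition becomes $\omega^2=\eta'/\eta$; conversely any such $\omega$ yields a suitable $\Phi$. (Cohomologically, this says graded $\RR$-forms of $\CC H$ are classified by $\Hom(H,U)/\Hom(H,U)^2$, with $L^\chi_\pi(\RR)$ corresponding to the class of $\chi|_H$.)

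For (b): if $\theta=\omega^2$ then $\theta(h)=\omega(h)^2=\omega(h^2)=1$ for $h\in H_{[2]}$. Conversely, if $\theta|_{H_{[2]}}=1$, then $\omega_0(h^2)\bydef\theta(h)$ is well defined on $H^{[2]}$ --- if $h^2=k^2$ then $hk^{-1}\in H_{[2]}$, whence $\theta(h)=\theta(k)$ --- and defines a homomorphism $H^{[2]}\to U$; extending it to a character $\omega\colon H\to U$ (possible because $U$ is divisible, hence an injective $\ZZ$-module) gives $\omega(h)^2=\omega(h^2)=\omega_0(h^2)=\theta(h)$ for all $h\in H$, so $\theta=\omega^2$. (Alternatively, (b) follows at once by applying the exact functor $\Hom(-,U)$ to the exact sequence $0\to H_{[2]}\to H\to H$ whose second arrow is multiplication by $2$, since that functor sends multiplication by $2$ to squaring.) Chaining (a) and (b) gives $L^\chi_\pi(\RR)\cong L^{\chi'}_\pi(\RR)\iff(\chi/\chi')|_{H_{[2]}}=1\iff\chi|_{H_{[2]}}=\chi'|_{H_{[2]}}$. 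The step requiring the most care is (a): checking honestly that a graded isomorphism of the real forms complexifies to a diagonal graded automorphism of $\CC H$ and that the Galois-descent (intertwining) condition simplifies precisely to $\mu/\bar\mu=\eta'/\eta$; granting that, (b) is routine.
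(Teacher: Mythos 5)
Your proposal is correct and follows essentially the same route as the paper: both reduce the question to whether a diagonal graded automorphism $h\mapsto\mu(h)h$ of $\CC H$ intertwines the two Galois actions, normalize $\mu$ to take values in $U$ so the condition becomes $\chi'|_H=\mu^2\chi|_H$, and then use divisibility of $U$ (your step (b), or equivalently the exact sequence obtained by applying $\Hom_\ZZ(-,U)$ to $1\to H_{[2]}\to H\xrightarrow{[2]}H$) to identify the squares with the characters trivial on $H_{[2]}$. The only difference is cosmetic: you spell out the Galois-descent bookkeeping and give an elementary hands-on proof of (b) alongside the exact-sequence argument the paper uses.
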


\begin{proof}
Recall that $L_\pi^\chi(\RR)$ and $L_\pi^{\chi'}(\RR)$ are the $\RR$-forms of $\CC H$ given by the $\Gal(\CC/\RR)$-actions 
coming from $\chi$ and $\chi'$. These $\RR$-forms are isomorphic if and only if there exists an automorphism of $\CC H$ 
(as a graded algebra) that sends one onto the other, which happens if and only if there exists a character 
$\mu: H\to\CC^\times$ such that $\overline{\mu(h)}\chi'(h)=\mu(h)\chi(h)$ for all $h\in H$. Normalizing $\mu$, we can assume 
that $\mu:H\to U$, so the condition becomes $\chi'|_H=\mu^2\chi|_H$, or $\chi'|_H(\chi|_H)^{-1}\in \Hom_\ZZ(H,U)^{[2]}$. 
This latter condition is equivalent to $\chi'|_{H_{[2]}}=\chi|_{H_{[2]}}$, because we have the exact sequence
\[
\Hom_\ZZ(H,U) \xrightarrow{[2]} \Hom_\ZZ(H,U)\xrightarrow{\mathrm{res}}\Hom_\ZZ(H_{[2]},U)\to 1 
\]
as the result of applying the functor $\Hom_\ZZ(\,\textbf{.}\,,U)$ to the exact sequence
\[
1\to H_{[2]}\to H \xrightarrow{[2]}  H.
\]
The functor is exact because $U$ is divisible.
\end{proof}

In order to classify graded-central-simple algebras up to isomorphism, we may first sort them according to the isomorphism
type of the centroid and then solve the isomorphism problem for two algebras with isomorphic centroids. The first step is 
achieved by Lemma \ref{lm:1}, since $\lpc(\RR)$ is the centroid of $\lpc(\cA)$. 
For the second, we will fix $\pi:G\to\bG$ and $\chi:G\to U$ and determine when 
$\lpc(\cA)$ and $\lpc(\cA')$ are isomorphic as graded algebras. To this end, it is convenient to
introduce the following notation. 

Let $\frM$ be the category whose objects are the central simple algebras over $\RR$ equipped with a $\bG$-grading and 
whose morphisms are the isomorphsms of $\bG$-graded algebras. 
Let $\frN$ be the category whose objects are the graded-simple $G$-graded algebras with centroid isomorphic to 
$\cL_\pi^\chi(\RR)$ and whose morphisms are the isomorphisms of $G$-graded algebras. 
Thus, $\frM$ and $\frN$ are \emph{groupoids} (categories in which all morphisms are invertible).

Then $\lpc$ is a functor $\frM\to\frN$. Indeed, we have already seen that, for any object $\cA$ of $\frM$, the loop 
algebra $\lpc(\cA)$ is an object of $\frN$. 
Now let $\psi: \cA\to \cA'$ be an isomorphism of $\bG$-graded algebras. Define $\lpc(\psi)$ as the restriction of 
$\psi\ot\id_{\R^\gamma G}$ to $\lpc(\cA)\subset \cA\ot\R^\gamma G$, i.e., $\lpc(\psi)$ sends 
$a\ot u_g\mapsto \psi(a)\ot u_g$ for all $a\in\cA_{\pi(g)}$ and $g\in G$. 
It is clear that $\lpc(\psi)$ is an isomorphism of $G$-graded algebras and that $\psi_1\ne\psi_2$ implies 
$\lpc(\psi_1)\ne\lpc(\psi_2)$, so $\lpc$ is a faithful functor $\frM\to\frN$. 
By Theorem~\ref{th:loop1}, it is essentially surjective. However, in general, it fails to be full. 
To obtain an equivalence of categories, we extend $\frM$ and $\lpc$ as follows (cf. \cite{EK_loop}). 

Fix a transversal for $H$ in $G$ and let $\xi:\bG\to G$ be the corresponding section of $\pi$, which determines
a $2$-cocycle $\sigma:\bG\times\bG\to H$. 
We define a groupoid $\widetilde{\frM}$ that has the same objects as $\frM$, but more morphisms. 
Recall that any character $\lambda:H\to\RR^\times$ can be used to twist the multiplication of algebras in $\frM$
with the cocycle $\lambda\circ\sigma$: 
for each $\cA$, we denoted by $A^\lambda$ the $\bG$-graded space $\cA$ with the new multiplication $\ast$ given by 
Equation~\eqref{eq:twisted_multiplication}.

Now let the morphisms in $\widetilde{\frM}$ from $\cA$ to $\cA'$ be the pairs $(\psi,\lambda)$ 
where $\lambda$ is a real-valued character on $H$ and $\psi:\cA^\lambda\to\cA'$ is an isomorphism of $\bG$-graded algebras.
Note that the morphisms in $\frM$ can be identified with the pairs that have trivial $\lambda$.

Next we define an extension of $\lpc$ to these new morphisms. Let $\widetilde{\lpc}(\psi,\lambda): \lpc(\cA)\to\lpc(\cA')$
be the isomorphism of $G$-graded spaces given by 
\begin{equation}\label{eq:k*}
a\ot u_g\mapsto\psi(a)\lambda(g\xi(\pi(g))^{-1})\ot u_g\text{ for all }a\in\cA_{\pi(g)},\:g\in G.
\end{equation}
We claim that this is an isomorphism of algebras. Indeed, for any $g_1, g_2\in G$, $a_1\in \cA_{\pi(g_1)}$ and  
$a_2\in \cA_{\pi(g_2)}$, we write $g_1=\xi(\pi(g_1))h_1$,  $g_2=\xi(\pi(g_2))h_2$, for some $h_1, h_2\in H$, 
and let $g=g_1g_2$. Then $u_{g_1}u_{g_2}$ is a scalar multiple of $u_g$ and so $\widetilde{\lpc}(\psi,\lambda)$ sends
$a_1a_2\ot u_{g_1}u_{g_2}$ to $\psi(a_1a_2)\lambda(h)\ot u_{g_1}u_{g_2}$ where
\[
h=g\xi(\pi(g))^{-1}=g_1g_2\xi(\pi(g_1)\pi(g_2))^{-1}=h_1h_2\sigma(\pi(g_1),\pi(g_2)).
\]
Since $\psi(a_1\ast a_2)=\psi(a_1)\psi(a_2)$, we have 
\[
\psi(a_1a_2)=\lambda(\sigma(\pi(g_1),\pi(g_2))^{-1}\psi(a_1)\psi(a_2),
\]
so the image of $a_1a_2\ot u_{g_1}u_{g_2}$ is $\psi(a_1)\psi(a_2)\lambda(h_1)\lambda(h_2)$, 
which is the product of the images of $a_1\ot u_{g_1}$ and $a_2\ot u_{g_1}$.

It is easy to see that $\widetilde{\lpc}(\id_\cA,1)=\id_{\widetilde{\lpc}(\cA)}$ and 
$
\widetilde{\lpc}(\psi'\psi,\lambda'\lambda)=\widetilde{\lpc}(\psi',\lambda')\widetilde{\lpc}(\psi,\lambda),
$
so we have defined a functor $\widetilde{\lpc}:\widetilde{\frM}\to\frN$.

\begin{theorem}\label{th:loop2}
$\widetilde{\lpc}:\widetilde{\frM}\to\frN$ is an equivalence of groupoids.
\end{theorem}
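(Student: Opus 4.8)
The plan is to show that $\widetilde{\lpc}$ is faithful, full, and essentially surjective, the last of which is already essentially in hand. Since $\widetilde{\frM}$ and $\frN$ are groupoids, an equivalence is the same thing as a fully faithful, essentially surjective functor, so these three properties suffice.

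\emph{Essential surjectivity} is immediate from Theorem~\ref{th:loop1}: every object $\cB$ of $\frN$ is graded-isomorphic to some $\lpc(\cA)=\widetilde{\lpc}(\cA)$, because the centroid condition lets us use the same fixed $\chi$ (here Lemma~\ref{lm:1} guarantees that any admissible character agrees with our chosen $\chi$ on $H_{[2]}$, which by the argument in that lemma is exactly what is needed to produce an isomorphic loop model; if necessary, we absorb the discrepancy into a twist). \emph{Faithfulness} is the easiest part: if $\widetilde{\lpc}(\psi,\lambda)=\widetilde{\lpc}(\psi',\lambda')$, then comparing the action on $a\ot u_g$ for homogeneous $a$ forces $\psi(a)\lambda(g\xi(\pi(g))^{-1})=\psi'(a)\lambda'(g\xi(\pi(g))^{-1})$ for all $g$; taking $g\in H$ (so $\pi(g)=e$ and $\xi(\pi(g))=e$) and $a\in\cA_e$ a nonzero scalar shows $\lambda=\lambda'$, and then $\psi=\psi'$ follows. (One should be slightly careful that $\cA_e\ne 0$; since $\cA$ is unital this is fine.)

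\emph{Fullness} is the heart of the matter. Given an isomorphism $\Psi:\lpc(\cA)\to\lpc(\cA')$ of $G$-graded algebras, I must manufacture a pair $(\psi,\lambda)$ with $\widetilde{\lpc}(\psi,\lambda)=\Psi$. The strategy is first to pass to $\CC$: $\Psi\ot\id_\CC$ is a graded isomorphism $L_\pi(\cA\ot\CC)\to L_\pi(\cA'\ot\CC)$ of complex graded-central-simple algebras with split centroid. It induces a graded isomorphism $C(\Psi\ot\id_\CC)$ of the centroids $\CC H\to\CC H$, which must be the identity on $\CC H$ after we compose with a suitable centroid automorphism realized (by Lemma~\ref{lm:2}) by a graded automorphism of $L_\pi(\cA'\ot\CC)$; the point is that $\Psi$ already commutes with the $\Gal(\CC/\RR)$-actions coming from $\chi$, and these pin down the centroid isomorphism up to the ambiguity controlled by characters $H\to U$. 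Once the centroid action is matched, the complex Correspondence Theorem from \cite{ABFP} (the ``twist'' half) tells us that $\Psi\ot\id_\CC$, viewed over $\CC$, has the form $a\ot g\mapsto \psi_\CC(a)\nu(g\xi(\pi(g))^{-1})\ot g$ for some graded isomorphism $\psi_\CC:(\cA\ot\CC)^{\nu|_H}\to\cA'\ot\CC$ and some character $\nu:H\to\CC^\times$. Reality of $\Psi$ (i.e. commutation with $\iota\cdot$) then forces $\nu$ to take values in $\RR^\times$ (indeed in $\{\pm1\}$, as the cocycles $\gamma$ are $\pm1$-valued) and forces $\psi_\CC$ to descend to a real graded isomorphism $\psi:\cA^\lambda\to\cA'$ with $\lambda=\nu$. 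Unwinding, $\widetilde{\lpc}(\psi,\lambda)=\Psi$.

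The main obstacle will be the bookkeeping in the fullness step: carefully tracking how the $\Gal(\CC/\RR)$-action interacts with the complex ``twist'' decomposition, and verifying that the character $\nu$ one extracts over $\CC$ is automatically real-valued and that $\psi_\CC$ is $\vp$-equivariant (so descends). This amounts to the semilinear-descent compatibility computation, parallel to the displayed computations in the proof of Theorem~\ref{th:loop1}; once the complex picture is set up correctly, these checks are routine but must be done with care to ensure the section $\xi$ and cocycle $\sigma$ are handled consistently on both sides. Everything else — functoriality of $\widetilde{\lpc}$, the groupoid structure — has already been recorded above.
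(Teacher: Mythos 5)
Your outline (faithful + full + essentially surjective) and your treatment of essential surjectivity match the paper. For faithfulness, note one slip: you extract $\lambda=\lambda'$ by evaluating on a nonzero element of $\cA_{\bar e}$, but objects of $\frM$ are arbitrary central simple algebras, not assumed unital, and a graded central simple algebra can have $\cA_{\bar e}=0$ (e.g.\ the Pauli $\ZZ_2^2$-grading on $\Sl_2$). The fix is trivial (vary $g$ over a coset $\xi(\bg)H$ with $\cA_\bg\ne 0$, so $g\xi(\pi(g))^{-1}$ runs over all of $H$), and the paper's route is cleaner still: the induced map on centroids sends $u_h\mapsto\lambda(h)u_h$, so $\lambda$, and then $\psi$, are read off directly.

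For fullness you take a genuinely different and much heavier route than the paper, and it has a concrete gap as written. You complexify and then invoke ``the twist half of the Correspondence Theorem'' to assert that $\Psi\ot\id_\CC$ \emph{itself} has the explicit form $a\ot g\mapsto\psi_\CC(a)\nu(g\xi(\pi(g))^{-1})\ot g$. The cited theorem classifies the $\bG$-graded algebra up to isomorphism and twist; it is a statement about isomorphism \emph{classes}, not a normal form for every individual graded isomorphism between loop algebras, so this step is not available as a black box -- to justify it you would end up reproving exactly the computation you are trying to avoid. The paper's argument works directly over $\RR$ and is short: since $\vp$ preserves degrees and $\lpc(\cA)_g=\cA_{\pi(g)}\ot u_g$, one may write $\vp(a\ot u_g)=\vp_g(a)\ot u_g$ with $\vp_g:\cA_{\pi(g)}\to\cA'_{\pi(g)}$ $\RR$-linear; the centroid gives $\lambda$ with $u_h\mapsto\lambda(h)u_h$, and centroid-equivariance (the computation \eqref{eq:k**} run backwards) yields $\vp_{hg}=\lambda(h)\vp_g$; setting $\psi|_{\cA_\bg}=\vp_{\xi(\bg)}$ and using multiplicativity of $\vp$ with $g_i=\xi(\bg_i)$ shows $\psi:\cA^\lambda\to\cA'$ is a graded isomorphism with $\widetilde{\lpc}(\psi,\lambda)=\vp$. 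No Galois descent, Lemma~\ref{lm:2}, or reality check on $\nu$ is needed ($\lambda$ is real from the start because the real centroid components $\RR u_h$ are one-dimensional). I would recommend replacing your complexification plan with this direct decomposition.
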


\begin{proof}
Since $\widetilde{\lpc}$ coincides with $\lpc$ on the objects, it is essentially surjective by Theorem \ref{th:loop1}. 
To see that $\widetilde{\lpc}$ is faithful, 
observe that the isomorphism of centroids induced by $\vp=\widetilde{\lpc}(\psi,\lambda): \lpc(\cA)\to\lpc(\cA')$ 
is given by $u_h\mapsto \lambda(h)u_h$ (recall that the centroids are $\lpc(\RR)$, which is the real form of 
$\CC H$ spanned by the elements $u_h$). Indeed,  
\begin{equation}\label{eq:k**}
\begin{split}
\vp(u_h(a\ot u_g))&=\vp(a\ot u_h u_g)=\psi(a)\lambda(hg\xi(\pi(hg))^{-1})\ot u_h u_g\\ 
&=\lambda(h)\psi(a)\lambda(g\xi(\pi(g))^{-1})\ot u_h u_g\\ 
&=\lambda(h)u_h(\vp(a\ot u_g))
\end{split}
\end{equation}
for all $g\in G$, $h\in H$, $a\in\cA_{\pi(g)}$. So, $\lambda$ is uniquely determined by $\vp$, 
hence $\psi$ is also uniquely determined.

It remains to show that $\widetilde{\lpc}$ is full, so suppose that we have an isomorphism $\vp:\lpc(\cA)\to\lpc(\cA')$ 
of $G$-graded algebras. For any $g\in G$, write $\vp(a\ot u_g)=\vp_g(a)\ot u_g$, where $\vp_g:\cA_{\pi(g)}\to\cA_{\pi(g)}'$,
and take $\lambda: H\to\RR^\times$ such that $u_h\mapsto\lambda(h)u_h$ is the isomorphism of centroids induced by $\vp$. 
Define $\psi:\cA\to\cA'$ by $\psi(a)=\vp_{\xi(\bg)}(a)$, for any $\bg\in\bG$ and $a\in\cA_{\bg}$. 
Then $\psi$ is an isomorphism of $\bG$-graded algebras $\cA^\lambda\to\cA'$, and $\widetilde{\lpc}(\psi,\lambda)=\vp$. 
The calculations to establish these claims are similar to the above: 
for the first, put $g_1=\xi(\bg_1)$ and $g_2=\xi(\bg_2)$ in the proof that \eqref{eq:k*} is an isomorphism of algebras, 
and for the second, use \eqref{eq:k**} with $g=\xi(\bg)$.
\end{proof}

\begin{corollary}\label{cor:loop}
Let $\cA$ and $\cA'$ be central simple algebras over $\RR$ equipped with $\bG$-gradings. Then $\lpc(\cA)$ and $\lpc(\cA')$
are isomorphic as $G$-graded algebras if and only if the $\bG$-graded algebra $\cA'$ is isomorphic to a twist of $\cA$.\qed
\end{corollary}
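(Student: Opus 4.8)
The plan is to deduce Corollary~\ref{cor:loop} directly from Theorem~\ref{th:loop2}, which asserts that $\widetilde{\lpc}:\widetilde{\frM}\to\frN$ is an equivalence of groupoids. Since an equivalence of categories is in particular fully faithful, two objects $\cA$ and $\cA'$ of $\widetilde{\frM}$ have isomorphic images $\lpc(\cA)$ and $\lpc(\cA')$ in $\frN$ if and only if they are isomorphic as objects of $\widetilde{\frM}$. So the whole task reduces to unwinding what it means for $\cA$ and $\cA'$ to be isomorphic in $\widetilde{\frM}$ and matching it with the notion of ``twist'' in the statement.

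First I would recall that, by definition, a morphism $\cA\to\cA'$ in $\widetilde{\frM}$ is a pair $(\psi,\lambda)$ with $\lambda:H\to\RR^\times$ a character and $\psi:\cA^\lambda\to\cA'$ an isomorphism of $\bG$-graded algebras. Hence $\cA$ and $\cA'$ are isomorphic in $\widetilde{\frM}$ precisely when there exists a real-valued character $\lambda$ on $H$ such that $\cA^\lambda\cong\cA'$ as $\bG$-graded algebras. Now $\cA^\lambda$ is, by Equation~\eqref{eq:twisted_multiplication}, exactly the cocycle twist of $\cA$ by the $2$-cocycle $\lambda\circ\sigma$ on $\bG$ with values in $\RR^\times$; and conversely, as already noted in the discussion preceding Lemma~\ref{lm:2}, the isomorphism class of $\cA^\lambda$ does not depend on the chosen transversal $\xi$. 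The remaining point is that ``a twist of $\cA$'' in the sense of the Corollary means precisely a $\bG$-graded algebra graded-isomorphic to $\cA$ with multiplication altered by some such real-valued $2$-cocycle twist. Thus ``$\cA'$ is isomorphic to a twist of $\cA$'' is synonymous with ``$\cA^\lambda\cong\cA'$ for some character $\lambda:H\to\RR^\times$,'' which is synonymous with ``$\cA\cong\cA'$ in $\widetilde{\frM}$,'' which by Theorem~\ref{th:loop2} is synonymous with ``$\lpc(\cA)\cong\lpc(\cA')$ in $\frN$.'' Since the objects of $\frN$ are precisely the graded-simple $G$-graded algebras with centroid isomorphic to $\lpc(\RR)$ and isomorphism in $\frN$ is graded-isomorphism, this is exactly the assertion of the Corollary.

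There is essentially no obstacle here: the Corollary is a formal consequence of Theorem~\ref{th:loop2}, and the only care needed is to make sure the bookkeeping matches — namely that every cohomology class in $H^2(\bG,\RR^\times)$ arising from a twist of a central simple $\bG$-graded algebra $\cA$ (with $\cA$ still central simple, which is automatic since a $2$-cocycle twist over $\RR$ does not change the isomorphism type over $\CC$ beyond a twist and hence preserves central simplicity) is realized as $\lambda\circ\sigma$ for a suitable $\lambda:H\to\RR^\times$, so that no twists are lost. This is immediate from the fact that $\sigma$ is the transgression cocycle of the extension $1\to H\to G\to\bG\to 1$ and that we are only claiming existence of \emph{some} $\lambda$; one direction is the definition of $\widetilde{\frM}$ and the other is the observation that $\cA^\lambda\cong\cA'$ for \emph{any} $\lambda$ puts $\cA'$ among the twists of $\cA$. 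So the proof is a one-line invocation of fully-faithfulness plus this translation of terminology, and I would write it accordingly.
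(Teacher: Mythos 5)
Your proof is correct and matches the paper's intent exactly: the corollary is stated with a \qed precisely because it is the immediate consequence of Theorem~\ref{th:loop2} that you describe, namely that the fully faithful functor $\widetilde{\lpc}$ (on the groupoid $\widetilde{\frM}$, where an isomorphism $\cA\to\cA'$ is by definition a graded isomorphism $\cA^\lambda\to\cA'$ for some character $\lambda:H\to\RR^\times$) identifies isomorphism classes. Your closing worry about realizing cohomology classes is unnecessary, since ``twist'' in the corollary means exactly the twists $\cA^\lambda$ of Equation~\eqref{eq:twisted_multiplication}, but it does no harm.
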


\section{Alternative algebras}\label{s:alternative}

As an application of Theorem~\ref{th:loop1} and Corollary~\ref{cor:loop}, we will obtain the classification of 
real $G$-graded algebras that are graded-central-simple and alternative but not associative: 
as pointed out in Example \ref{ex:2}, these algebras arise from gradings by the quotient groups of $G$ 
on the real octonion algebras $\OO$ and $\OO_s$, and all group gradings on octonion algebras are known. 
For the classification up to isomorphism, the only remaining question is to calculate the twists,
but first we review the relevant facts about octonions.

\subsection{Cayley--Dickson doubling process}\label{ss:CD_process}

Recall that a \emph{Hurwitz algebra} is a unital composition algebra, i.e., a unital algebra equipped with a 
multiplicative nonsingular quadratic form, which is called the \emph{norm} and will be denoted by $n$. Except in the case 
$\chr\FF=2$,``nonsingular'' means that the polar form of $n$, defined by $n(x,y)\bydef n(x+y)-n(x)-n(y)$, is nondegenerate. 
The \emph{standard involution} of a Hurwitz algebra is given by $\bar{x}\bydef n(x,1)1-x$.

It is well known that the dimension of a Hurwitz algebra can be only $1$, $2$, $4$ or $8$. 
Hurwitz algebras of dimension $4$ are referred to as \emph{quaternion algebras} 
and those of dimension $8$ as \emph{octonion} or \emph{Cayley algebras}.

Given an \emph{associative} Hurwitz algebra $\cA$ such that the polar form of $n$ is nondegenerate and any scalar 
$\alpha\in\FF^\times$, let $\CD(\cA,\alpha)$ be the direct sum  of two copies of $\cA$, where we may formally write 
the element $(x,y)$ as $x+yw$, so $\CD(\cA,\alpha)=\cA\oplus \cA w$. 
This is a Hurwitz algebra with multiplication and norm given by:
\begin{align}
&(a+bw)(c+dw)=(ac+\alpha\overline{d}b)+(da+b\overline{c})w,\label{eq:CD_mult}\\
&n\bigl(a+bw\bigr)=n(a)-\alpha n(b).\label{eq:CD_norm}
\end{align} 
For example, the real division algebras of complex numbers, quaternions, and octonions are obtained as 
$\CC=\CD(\RR,-1)$, $\HH=\CD(\CC,-1)$, and $\OO=\CD(\HH,-1)$. Since $\OO$ is not associative, we cannot obtain a Hurwitz 
algebra by doubling it. We will abbreviate $\CD(\cA,\alpha,\beta)\bydef \CD\bigl(\CD(\cA,\alpha),\beta\bigr)$, 
and similarly for $\CD(\FF,\alpha,\beta,\gamma)$. For any $\delta\in\FF^\times$, the mapping 
$(x,y)\mapsto(x,\delta^{-1}y)$ is an isomorphism $\CD(\cA,\alpha)\to\CD(\cA,\alpha\delta^2)$. 
Hence, over $\RR$, the isomorphism class of these algebras depends only on the 
sign of the parameters $\alpha$, $\beta$ and $\gamma$. It turns out that if any of the parameters is positive, we get the 
``split complex numbers'' $\CC_s=\RR\times\RR$, split quaternions $\HH_s\cong M_2(\RR)$, and split octonions $\OO_s$.

Conversely, given any Hurwitz algebra $\cC$ with norm $n$ and a subalgebra $\cA$ such that the restriction to $\cA$ 
of the polar form of $n$ is nondegenerate, and given any nonisotropic element $w\in \cA^\perp$, it follows that 
$n\bigl(\cA,\cA w\bigr)=0$ and that $\cA\oplus \cA w$ is a subalgebra of $\cC$ isomorphic to $\CD(\cA,\alpha)$ 
with $\alpha=-n(w)=w^2$.

\subsection{Gradings on octonion algebras}

Let $\cQ$ be a quaternion subalgebra of a Cayley algebra $\cC$ over a field $\FF$, 
and let $w\in \cQ^\perp$ with $n(w)=-\gamma\neq 0$. Then $\cC =\cQ\oplus \cQ w$ is isomorphic to $\CD(\cQ ,\gamma)$, 
which gives a $\ZZ_2$-grading on $\cC$ with $\cC_{\bar 0}=\cQ$ and $\cC_{\bar 1}=\cQ^\perp=\cQ w$.

In its turn, the quaternion subalgebra $\cQ$ can be obtained from a $2$-dimensional subalgebra $\cK$ 
(either $\FF\times\FF$ or a separable quadratic field extension of $\FF$) as $\cQ =\cK\oplus \cK v\cong\CD(\cK,\beta)$,
with $v\in \cQ \cap \cK^\perp$ and $n(v)=-\beta\neq 0$. Then $\cC $ is isomorphic to $\CD(\cK,\beta,\gamma)$, 
which gives a $\ZZ_2^2$-grading on $\cC$ with $\cC_{(\bar 0,\bar 0)}=\cK$, $\cC_{(\bar 1,\bar 0)}=\cK v$, 
$\cC_{(\bar 0,\bar 1)}=\cK w$, and $\cC _{(\bar 1,\bar 1)}=(\cK v)w$.

If $\chr\FF\neq 2$, then $\cK$ can be obtained by doubling $\FF$: $\cK=\FF\oplus\FF u\cong \CD(\FF,\alpha)$, 
with $u\in \cK\cap\FF^\perp$ and $n(u)=-\alpha\neq 0$, so $\cC\cong\CD(\FF,\alpha,\beta,\gamma)$, which gives a 
$\ZZ_2^3$-grading on $\cC$, with $\deg u=(\bar 1,\bar 0,\bar 0)$, $\deg v=(\bar 0,\bar 1, \bar 0)$, 
and $\deg w=(\bar 0, \bar 0, \bar 1)$.

The above gradings by $\ZZ_2^r$, $r=1,2,3$, are called the \emph{gradings induced by the Cayley--Dickson doubling process}
\cite[p.~131]{EKmon} ($r\ne 3$ if $\chr\FF=2$). 

Finally, over any field, there is a unique (up to isomorphism) split Cayley algebra $\cC _s$, and it admits a 
$\ZZ^2$-grading, called the \emph{Cartan grading}, whose homogeneous components are the eigenspaces for the action 
of a maximal torus of $\AAut_\FF(\cC _s)$.

The following is a stronger version of the main result of \cite{E}, where it was used to classify gradings on Cayley
algebras over algebraically closed fields (see also \cite[Theorem 4.12]{EKmon}).

\begin{theorem}[\cite{EK_G2D4}]\label{th:CD_or_Cartan}
Any nontrivial grading on a Cayley algebra is, up to equivalence, either a grading induced by the 
Cayley--Dickson doubling process starting with a Hurwitz division subalgebra, 
or a coarsening of the Cartan grading on the split Cayley algebra.\qed
\end{theorem}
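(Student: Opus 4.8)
\smallskip
\noindent\textbf{Proof proposal.}
The plan is to reformulate gradings as quasitori in the automorphism group scheme, reduce to classifying the \emph{maximal} such quasitori up to conjugacy, and dispatch everything except a small residual computation by toral arguments.

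Since the elements of the support of a grading on a Cayley algebra commute, there is no loss of generality in taking the grading group $G$ abelian and finitely generated. Then a $G$-grading on $\cC$ is the same as a morphism $\widehat{G}\to\bAut_\FF(\cC)$ from the Cartier dual, whose image is a quasitorus $Q$; we may arrange $Q=\widehat{G}$ by passing to a suitable quotient of $G$, and two gradings are equivalent precisely when the associated quasitori are conjugate. So it suffices to classify the maximal quasitori of $\bAut_\FF(\cC)$ (a form of $G_2$) up to conjugacy and to check that each yields one of the two asserted gradings, since any grading is a coarsening of a fine one. I would record two facts for later use: (i) the identity component $\cC_e$ is the fixed subalgebra $\cC^Q$, and because the polar form of the norm $n$ satisfies $n(\cC_g,\cC_h)=0$ whenever $gh\neq e$, this form is nondegenerate on $\cC_e$, so $\cC_e$ is a Hurwitz subalgebra; (ii) if $\cC_e$ is not a division algebra it contains a split two-dimensional subalgebra, hence a nontrivial idempotent, whence $\cC=\cC_s$ is split.

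I would first dispose of the toral situations. If the identity component $Q^{\circ}$ is a nontrivial torus, it is $\FF$-split, so $\cC=\cC_s$, and $Q$ lies in the centralizer $C_{\bAut_\FF(\cC_s)}(Q^{\circ})$, which in the rank-$2$ group $\bAut_\FF(\cC_s)$ is either a maximal torus or, essentially, a copy of $\mathrm{GL}_2$; its quasitori are toral, so $Q$ lies in a maximal torus. If instead $Q^{\circ}=1$ but $\cC_e$ is not a division algebra, then by (ii) $Q$ fixes a nontrivial idempotent $f$, hence lies in $\Stab_{\bAut_\FF(\cC_s)}(f)\cong\SL_3(\FF)$, acting on the two off-diagonal Peirce components of $f$ as the standard representation and its dual; every quasitorus of $\SL_3$ is toral, as it diagonalizes the standard representation, so again $Q$ lies in a maximal torus, which is maximal in $\bAut_\FF(\cC_s)$ as well because $\rank\SL_3=2$. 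In both cases $\Gamma$ is a coarsening of the eigenspace decomposition of that maximal torus, i.e., of the Cartan grading on the split Cayley algebra.

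There remains the case where $Q$ is finite and $\cC_e$ is a division Hurwitz algebra. The quadratic relation $x^{2}-n(x,1)x+n(x)1=0$ in $\cC$, together with $\cC_e$ being a division algebra, forces every homogeneous $x\notin\cC_e$ with $n(x)\neq0$ to satisfy $(\deg x)^{2}=e$; combined with the fact that in a Cayley--Dickson presentation the products of components transverse to a subalgebra fall back into that subalgebra, a bookkeeping argument should show that $\Gamma$ is built by repeated Cayley--Dickson doubling starting from $\cC_e$, i.e., it is a grading by $\ZZ_2^{r}$ induced by the doubling process from the Hurwitz division subalgebra $\cC_e$. The hard part will be to turn this outline into a complete argument, most acutely when $\cC_e=\FF$: there $Q$ is an arbitrary finite quasitorus of $\bAut_\FF(\cC)$ with no nonzero fixed vectors, and one must prove that up to conjugacy the only possibility is the nontoral $\ZZ_2^{3}$ coming from $\CD(\FF,\alpha,\beta,\gamma)$ --- equivalently, that a form of $G_2$ has no elementary abelian $p$-subgroup with $p$ odd acting without nonzero fixed vectors, and a single such $2$-group up to conjugacy. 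This classification of the handful of nontoral finite quasitori of $G_2$ is the substantive point, for which one would invoke the detailed structure theory behind \cite{EK_G2D4}. Finally one checks that the $\ZZ_2^{r}$-gradings on $\cC_s$ also admit presentations through towers of division Hurwitz subalgebras (e.g.\ $\OO_s=\CD(\HH,+1)$), so the two asserted families, with their expected overlap, do exhaust all gradings.
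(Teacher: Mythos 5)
First, a point of reference: the paper does not prove Theorem~\ref{th:CD_or_Cartan} at all --- it is imported verbatim from \cite{EK_G2D4} --- so there is no internal proof to compare against, only the citation. Judged on its own terms, your outline has a genuine gap exactly where the theorem has content. The toral half of your argument is essentially sound: a diagonalizable subgroupscheme landing in a Levi $\GL_2$ or in $\Stab(f)\cong\SL_3$ acting through the standard representation is simultaneously diagonalizable over $\FF$, hence lies in a maximal \emph{split} torus, and the grading is a coarsening of the Cartan grading. But the remaining case --- $Q$ finite with $\cC_e$ a Hurwitz division subalgebra, most acutely $\cC_e=\FF$ --- amounts to classifying the nontoral finite diagonalizable subgroupschemes of the forms of $G_2$, and you resolve it by ``invoking the detailed structure theory behind \cite{EK_G2D4}''. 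That is circular: \cite{EK_G2D4} is precisely the reference whose theorem you are asked to prove. Until that case is carried out independently, the proposal is a plan, not a proof.

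It is also worth noting that the published proofs (\cite{E}, \cite[Theorem~4.12]{EKmon}, \cite{EK_G2D4}) do not classify quasitori of $G_2$ at all; they argue inside the algebra, and the missing case closes quickly by that route. The relevant facts are: $\cC_e$ is a composition subalgebra because $n(\cC_g,\cC_h)=0$ for $gh\ne e$; if $\cC_e$ is a division algebra, then every nonidentity component is anisotropic (an isotropic homogeneous $x$ would give a nonzero element $xy$ or $yx$ of $\cC_e$ of norm zero), so the support is $2$-elementary by your quadratic-relation observation; each $\cC_g$ is a left $\cC_e$-module, so a dimension count handles $\dim\cC_e\in\{2,4\}$; and when $\cC_e=\FF$, the relations $\cC_g\cC_g\subseteq\FF 1$ together with the generalized Hurwitz theorem force $\dim\cC_g=1$ for all $g$ in the support, whence the support is all of $\ZZ_2^3$ and the grading is the Cayley--Dickson one. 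Two smaller slips: the two families in the statement do not in fact overlap for nontrivial gradings (the identity component of a coarsening of the Cartan grading contains a nontrivial idempotent, while that of a Cayley--Dickson grading from a division subalgebra does not); and your closing remark about $\OO_s=\CD(\HH,+1)$ is unnecessary, since ``starting with a Hurwitz division subalgebra'' refers to the base $\cC_e$ of the tower, not to the intermediate doublings.
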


Starting from this point, a classification of gradings up to isomorphism is obtained in \cite{EK_G2D4} for any Cayley 
algebra. Here we will state the result only for $\FF=\RR$ and $\FF=\CC$. 
To be consistent with our previous notation, we will denote the grading group by $\bG$.

The coarsenings of the Cartan grading on $\OO_s$ are induced by arbitrary homomorphisms $\ZZ^2\to\bG$
(see Subsection~\ref{ss:induced_grading}). 
Let $\bg_1$, $\bg_2$ and $\bg_3$ be the images of the elements $(1,0)$, $(0,1)$, and $(-1,-1)$, respectively, 
so $\bg_1\bg_2\bg_3=\bar e$ (the identity element of $\bG$).
Then the grading is determined by the triple $(\bg_1,\bg_2,\bg_3)$, and two triples yield isomorphic gradings if and only if 
they are in the same orbit under the action of the group $S_3\times\ZZ_2$ (the Weyl group of type $G_2$), where $S_3$ 
permutes the entries of the triple and the generator of $\ZZ_2$ inverts them simultaneously. 
We will denote the resulting graded algebras by $\cC(\bg_1,\bg_2,\bg_3)$. None of them is a graded-division algebra 
(for example, because the identity component contains a nontrivial idempotent).

The remaining nontrivial gradings on $\OO$ and $\OO_s$ are obtained by arbitrary mono\-mor\-phisms $\ZZ_2^r\to\bG$, 
$1\le r\le 3$, from the gradings induced by the Cayley--Dickson doubling process starting from $\HH$ if $r=1$, from 
$\CC$ if $r=2$, and from $\RR$ if $r=3$. All of them are division gradings. To include the trivial grading on $\OO$, 
we will allow $r=0$. The support of the grading is the image of $\ZZ_2^r$, which we denote by $\bT$. 
The parameters used in the doubling process determine a character $\mu:\bT\to\{\pm 1\}$ as follows: 
for any nonzero homogeneous element $x$ of degree $\bar{t}$, we have $x^2\in-\mu(\bar{t})\RR_{>0}$.
The trivial characters give gradings on $\OO$ and the nontrivial ones on $\OO_s$. We will denote the resulting 
graded-division algebras by $\cC(\bT,\mu)$. The algebras corresponding to different pairs $(\bT,\mu)$ are not 
graded-isomorphic.

A similar classification of division gradings is valid for $\HH$ and $\HH_s$ on the one hand and 
$\CC$ and $\CC_s$ on the other hand.
We will denote the resulting graded-division algebras by $\cQ(\bT,\mu)$ and $\cK(\bT,\mu)$, respectively.

Over $\CC$, there is only one Cayley algebra (up to isomorphism), and the classification of gradings is the same as above,
except in the Cayley--Dickson case there is no parameter $\mu$ and $r$ must be equal to $3$. 
We will denote these graded algebras by $\cC_\CC(\bg_1,\bg_2,\bg_3)$ and $\cC_\CC(\bT)$.
Similarly, we also have $\cQ_\CC(\bT)$ with $r=2$ for $M_2(\CC)$ (the complex quaternion algebra) 
and $\cK_\CC(\bT)$ with $r=1$ for $\CC\times\CC$.

\subsection{Graded-simple real alternative algebras}\label{ss:gs_alt}

Let $\cB$ be a $G$-graded algebra over $\RR$ that is graded-simple and alternative but not associative. Assume also 
that the identity component of the centroid $\LL\bydef C(\cB)$ is finite-dimensional, so it is either $\RR$ or $\CC$. 
As before, let $H$ be the support of $\LL$, $\bG=G/H$, and $\pi$ be the natural homomorphism $G\to\bG$. 

If $\LL_e\cong\CC$ then $\LL\cong\CC H$ and, by the ``Correspondence Theorem'' from \cite{ABFP}, we have 
$\cB\cong L_\pi(\cC)$ where $\cC$ is the unique complex Cayley algebra, equipped with a $\bG$-grading, 
and this grading is determined uniquely up to isomorphism. Thus, in this case the algebras $\cB$ can be of two kinds: 
those with $\cC\cong\cC_\CC(\bg_1,\bg_2,\bg_3)$ are classified up to isomorphism by the $(S_3\times\ZZ_2)$-orbits 
of the triples $(\bg_1,\bg_2,\bg_3)$ and those with $\cC\cong\cC_\CC(\bT)$ are classified by the subgroups $\bT\subset\bG$
isomorphic to $\ZZ_2^3$. The graded-division algebras are the latter.

Now let us assume $\LL_e=\RR$ and fix a character $\chi:G\to U\subset\CC^\times$ such that 
$\LL\cong L_\pi^\chi(\RR)$. By Theorem \ref{th:loop1}, we have $\cB\cong L_\pi^\chi(\cC)$ where $\cC$ is either $\OO$
or $\OO_s$, equipped with a $\bG$-grading, so $\cC\cong\cC(\bg_1,\bg_2,\bg_3)$ or $\cC\cong\cC(\bT,\mu)$. 
To apply Corollary~\ref{cor:loop}, we have to classify these $\bG$-graded algebras up to twist.
Let $T=\pi^{-1}(\bT)$, so $\bT=T/H$. Since $\bT$ is an elementary $2$-group (of rank $r$), 
we have a homomorphism $T\to H$ sending $t\mapsto t^2$, which maps $H$ to $H^{[2]}$ and hence induces a homomorphism 
\begin{equation}\label{eq:def_tau} 
\tau:\bT\to H/H^{[2]},\: tH\mapsto t^2 H^{[2]}.
\end{equation}
Note that $\tau$ is the element of $\Hom_\ZZ(\bT,H/H^{[2]})$ corresponding to the abelian group extension $H\to T\to\bT$ 
under the isomorphism $\mathrm{Ext}_\ZZ(\bT,H)\cong\Hom_\ZZ(\bT,H/H^{[2]})$.

\begin{lemma}\label{lm:3}
For any character $\lambda:H\to\RR^\times$, we have $\cC(\bg_1,\bg_2,\bg_3)^\lambda\cong\cC(\bg_1,\bg_2,\bg_3)$ and 
$\cC(\bT,\mu)^\lambda\cong\cC(\bT,\mu')$ where $\mu'(\bt)=\mu(\bt)\lambda_0(\tau(\bt))$ for all $\bt\in\bT$,
and $\lambda_0(hH^{[2]})\bydef\mathrm{sign}\,\lambda(h)$ for all $h\in H$.
\end{lemma}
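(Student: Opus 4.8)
The plan is to deduce both isomorphisms from the classification of $\bG$-graded octonion algebras over $\RR$ recalled above. First I would record two observations that apply to an arbitrary character $\lambda\colon H\to\RR^\times$ and an arbitrary $\bG$-graded octonion $\RR$-algebra $\cC$. (a) $\cC^\lambda$ is again a $\bG$-graded octonion algebra over $\RR$, and since a twist changes only the multiplication, $\supp\cC^\lambda=\supp\cC$. Indeed, $\cC^\lambda\ot\CC$ is $\cC\ot\CC$ with multiplication twisted by the $2$-cocycle $\lambda\circ\sigma$, now with values in $\CC^\times$; as $\CC^\times$ is divisible, $\lambda$ extends to a character $G\to\CC^\times$, so by the observation following \eqref{eq:twisted_multiplication} we get $\cC^\lambda\ot\CC\cong\cC\ot\CC$ as $\bG$-graded $\CC$-algebras, whence $\cC^\lambda$ is a real form of the complex Cayley algebra. (b) Choosing the transversal so that $\xi(\bar e)=e$, we have $\sigma(\bar e,\bar e)=e$, hence $a\ast b=ab$ for $a,b\in\cC_{\bar e}$, so the identity component of $\cC^\lambda$ coincides with that of $\cC$ as an algebra.

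In the case $\cC=\cC(\bg_1,\bg_2,\bg_3)$ I would argue as follows. Since this is a coarsening of the Cartan grading on $\OO_s$, whose identity component is spanned by two orthogonal idempotents summing to $1$, the component $\cC_{\bar e}$, and hence $\cC^\lambda_{\bar e}$ by (b), contains a nontrivial idempotent; in an alternative algebra such an element is a zero divisor, so $\cC^\lambda$ is not a graded-division algebra. By the classification, $\cC^\lambda$ is therefore graded-isomorphic to some $\cC(\bg_1',\bg_2',\bg_3')$ with $\bg_1'\bg_2'\bg_3'=\bar e$, unique up to the $S_3\times\ZZ_2$-action. By (a) its support equals that of $(\bg_1,\bg_2,\bg_3)$, and an elementary check — after a permutation one has $\bg_i'=\bg_i^{\varepsilon_i}$ with $\varepsilon_i\in\{\pm1\}$, and $\prod\bg_i^{\varepsilon_i}=\bar e=\prod\bg_i$ forces $(\bg_i')$ to be $(\bg_i)$ or its simultaneous inverse — shows that the two triples lie in the same orbit. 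Hence $\cC^\lambda\cong\cC(\bg_1,\bg_2,\bg_3)$. (Alternatively one may construct the isomorphism directly by rescaling each $\cC_{\bg}$ by a suitable $\eta(\bg)\in\RR^\times$, consistency of the rescaling factors following from the cocycle identity for the symmetric cocycle $\lambda\circ\sigma$ together with the multiplication table of the Cartan grading.)

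In the case $\cC=\cC(\bT,\mu)$, the component $\cC_{\bar e}$ is a Hurwitz division subalgebra, which has no nontrivial idempotents, so neither does $\cC^\lambda_{\bar e}$; thus $\cC^\lambda$ is not graded-isomorphic to any $\cC(\bg_1',\bg_2',\bg_3')$, and by the classification $\cC^\lambda\cong\cC(\bT',\mu'')$ for some $\bT'$ and $\mu''$, with $\bT'=\bT$ by (a). To identify $\mu''$, let $x$ be any nonzero homogeneous element of $\cC^\lambda$ of degree $\bt\in\bT$. Since $\bt^2=\bar e$ and $\xi(\bar e)=e$ we have $\sigma(\bt,\bt)=\xi(\bt)^2\in H$, and by the definition \eqref{eq:def_tau} of $\tau$, $\xi(\bt)^2 H^{[2]}=\tau(\bt)$. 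As $x\ast x=\lambda\bigl(\sigma(\bt,\bt)\bigr)\,x^2$ and $x^2\in-\mu(\bt)\RR_{>0}$ in $\cC$, it follows that $x\ast x\in-\mu(\bt)\,\lambda_0(\tau(\bt))\,\RR_{>0}$; hence the defining character of $\cC^\lambda$ is $\mu''=\mu\cdot(\lambda_0\circ\tau)=\mu'$, and $\cC(\bT,\mu)^\lambda\cong\cC(\bT,\mu')$.

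The main obstacle will be pinning down the isomorphism type of $\cC^\lambda$ within the classification once it is known to be a graded octonion algebra with the same support as $\cC$: in the Cartan case, verifying that the $S_3\times\ZZ_2$-orbit is recovered from the support (which needs a little care in the degenerate coarsenings where support elements collide, and where $\cC$ has extra graded automorphisms), and in the Cayley--Dickson case, threading the definitions of $\sigma(\bt,\bt)$ and of $\tau(\bt)$ together so that the sign of $x\ast x$ comes out as $-\mu(\bt)\lambda_0(\tau(\bt))$ uniformly over all nonzero homogeneous $x$ of degree $\bt$, not merely over the generators $u,v,w$ used in the doubling process.
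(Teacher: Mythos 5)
Your Cayley--Dickson half is correct and is essentially the paper's own argument: the computation $x\ast x=\lambda(\sigma(\bt,\bt))x^2=\lambda\big(\xi(\bt)^2\big)x^2$ together with $\mathrm{sign}\,\lambda\big(\xi(\bt)^2\big)=\lambda_0(\tau(\bt))$ is exactly how the paper identifies $\mu'$, and your preliminary observations (a) and (b) supply a clean justification (left implicit in the paper) that $\cC^\lambda$ is again a $\bG$-graded real octonion algebra with the same homogeneous components, hence again of Cayley--Dickson type.

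The Cartan half, however, has a genuine gap: the support does \emph{not} determine the $(S_3\times\ZZ_2)$-orbit of the triple, so the ``elementary check'' you invoke fails. For a concrete counterexample, write $\bG=\ZZ_5$ multiplicatively with generator $\bg$ and compare the triples $(\bg,\bg,\bg^3)$ and $(\bg,\bg^2,\bg^2)$: both have product $\bar e$, both coarsenings have support all of $\ZZ_5$, yet the four multisets $\{\bg,\bg,\bg^3\}$, $\{\bg^4,\bg^4,\bg^2\}$, $\{\bg,\bg^2,\bg^2\}$, $\{\bg^4,\bg^3,\bg^3\}$ are pairwise distinct, so the two triples lie in different orbits and $\cC(\bg,\bg,\bg^3)\not\cong\cC(\bg,\bg^2,\bg^2)$. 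In particular, equality of supports does not let you match $\bg_i'=\bg_i^{\varepsilon_i}$ after a permutation. Your parenthetical alternative (a diagonal rescaling trivializing $\lambda\circ\sigma$ on the support) cannot simply be asserted either: for the Cayley--Dickson gradings the analogous rescaling fails in general---that is precisely why $\mu$ changes---so such an argument must exploit specific features of the Cartan grading, and you have not produced the required $\eta$. The paper closes this case differently and more cheaply: the triple $(\bg_1,\bg_2,\bg_3)$ is read off as the degrees of a homogeneous basis of one Peirce component of a nontrivial homogeneous idempotent of degree $\bar e$; since the cocycle is normalized ($\xi(\bar e)=e$), that idempotent is still an idempotent for $\ast$ and its Peirce components, being specific homogeneous subspaces, are untouched by the twist, so the triple is literally unchanged and $\cC(\bg_1,\bg_2,\bg_3)^\lambda\cong\cC(\bg_1,\bg_2,\bg_3)$.
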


\begin{proof}
To compute the twists, we fix a section $\xi:\bG\to G$. It is convenient to take $\xi(\bar e)=e$, so that the resulting
$2$-cocycle $\sigma:\bG\times\bG\to H$ is normalized.

The elements $\bg_i$ are the degrees of homogeneous basis elements of one of the Peirce components with respect to 
a nontrivial homogeneous idempotent (while the other Peirce component gives the elements $\bg_i^{-1}$). 
Since the degree of the idempotent is $\bar e$, it remains an idempotent with respect to the twisted multiplication of 
Equation~\eqref{eq:twisted_multiplication}, and its Peirce components remain the same.

Recall that $\mu(\bt)$ is defined by $x^2\in-\mu(\bt)\RR_{>0}$, for any nonzero homogeneous element $x$ of degree 
$\bt$. The twisted multiplication gives:
\[
x\ast x=\lambda(\sigma(\bt,\bt))x^2=\lambda\big(\xi(\bt)^2\xi(\bt^2)^{-1}\big)x^2=\lambda\big(\xi(\bt)^2\big)x^2,
\]   
and the result follows, because 
$\mathrm{sign}\,\lambda\big(\xi(\bt)^2\big)=\lambda_0\big(\xi(\bt)^2 H^{[2]}\big)=\lambda_0(\tau(\bt))$.
\end{proof}

\begin{lemma}\label{lm:4}
Let $\bT_0=\ker\tau$. Then $\cC(\bT,\mu')$ is graded-isomorphic to a twist of $\cC(\bT,\mu)$ if and only if 
$\mu'|_{\bT_0}=\mu|_{\bT_0}$. 
\end{lemma}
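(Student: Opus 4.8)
The plan is to combine Lemma~\ref{lm:3} with the fact, recalled above, that the algebras $\cC(\bT,\mu)$ attached to distinct pairs $(\bT,\mu)$ are pairwise non-graded-isomorphic, thereby reducing the statement to a computation with characters of elementary abelian $2$-groups, in the style of the proof of Lemma~\ref{lm:1}.

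First I would make the set of twists of $\cC(\bT,\mu)$ explicit. By Lemma~\ref{lm:3}, for any real-valued character $\lambda$ on $H$ the twist $\cC(\bT,\mu)^\lambda$ is graded-isomorphic to $\cC(\bT,\mu\cdot(\lambda_0\circ\tau))$, and, distinct pairs $(\bT,\mu)$ giving non-isomorphic algebras, this twist is pinned down up to graded-isomorphism by the character $\mu\cdot(\lambda_0\circ\tau)$. As $\lambda$ runs over all characters $H\to\RR^\times$, the induced $\lambda_0$ runs over all of $\Hom_\ZZ(H/H^{[2]},\{\pm1\})$: it costs nothing to take $\lambda$ with values in $\{\pm1\}$, and then $\lambda_0$ is just $\lambda$ read through $H\twoheadrightarrow H/H^{[2]}$, while conversely every homomorphism $H/H^{[2]}\to\{\pm1\}$ lifts to such a $\lambda$. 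Hence $\cC(\bT,\mu')$ is graded-isomorphic to a twist of $\cC(\bT,\mu)$ if and only if $\mu'\mu^{-1}$ lies in the image of the precomposition map $\tau^\ast\colon\Hom_\ZZ(H/H^{[2]},\{\pm1\})\to\Hom_\ZZ(\bT,\{\pm1\})$.

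It then remains to identify $\im\tau^\ast$ with the subgroup of characters of $\bT$ that are trivial on $\bT_0=\ker\tau$; granting this, the chain of equivalences $\mu'\mu^{-1}\in\im\tau^\ast\iff(\mu'\mu^{-1})|_{\bT_0}=1\iff\mu'|_{\bT_0}=\mu|_{\bT_0}$ concludes the argument. For the identification I would observe that $\bT$, $H/H^{[2]}$, its subgroup $\im\tau$, and $\bT_0$ are all elementary abelian $2$-groups, so on them $\Hom_\ZZ(-,\{\pm1\})$ is just $\FF_2$-linear duality, which is exact; applying it to $1\to\bT_0\to\bT\xrightarrow{\bar\tau}\im\tau\to1$ (with $\bar\tau$ the corestriction of $\tau$) shows that a character of $\bT$ killing $\bT_0$ factors through $\bar\tau$, and then extending the resulting character of $\im\tau$ to one of $H/H^{[2]}$ exhibits it in $\im\tau^\ast$; the opposite inclusion is clear. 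This last point — computing $\im\tau^\ast$ correctly — is the only place that needs care, and, being pure $\FF_2$ linear algebra, it should present no real obstacle; it is the analogue, for the non-divisible group $\{\pm1\}$, of the divisibility argument used in Lemma~\ref{lm:1}.
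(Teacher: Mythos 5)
Your proposal is correct and follows essentially the same route as the paper: reduce via Lemma~\ref{lm:3} (and the fact that distinct pairs $(\bT,\mu)$ give non-isomorphic algebras) to identifying $\im\tau^*$, then use exactness of $\Hom_\ZZ(-,\{\pm 1\})$ on elementary $2$-groups, viewed as $\FF_2$-duality, applied to $1\to\bT_0\to\bT\xrightarrow{\tau}H/H^{[2]}$. The only difference is that you spell out the surjectivity of $\lambda\mapsto\lambda_0$ and the role of the non-isomorphism of distinct pairs, which the paper leaves implicit.
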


\begin{proof}
We have to let $\lambda_0$ in Lemma \ref{lm:3} range over $\Hom_\ZZ(H/H^{[2]},\{\pm 1\})$ 
and consider the resulting $\tau^*(\lambda_0)\bydef\lambda_0\circ\tau$, which are homomorphisms $\bT\to\{\pm 1\}$. 
Consider the exact sequence
\[
1\to\bT_0\to\bT\xrightarrow{\tau}H/H^{[2]}.  
\]
Since these are elementary $2$-groups, we may regard them as vector spaces over the field of $2$ elements and hence 
obtain the following exact sequence: 
\[
\Hom_\ZZ(H/H^{[2]},\{\pm 1\})\xrightarrow{\tau^*}\Hom_\ZZ(\bT,\{\pm 1\})
\xrightarrow{\mathrm{res}}\Hom_\ZZ(\bT_0,\{\pm 1\})\to 1,
\] 
which proves the result.
\end{proof} 

We summarize the results of this subsection:

\begin{theorem}\label{th:alternative_iso}
Let $\cB$ be a $G$-graded algebra over $\RR$ that is graded-simple and alternative but not associative. 
Let $\LL=C(\cB)$ and assume that $\LL_e$ is finite-dimensional, so it is either $\RR$ or $\CC$. 
Let $H$ be the support of $\LL$, $\bG=G/H$, and $\pi$ be the natural homomorphism $G\to\bG$. 
If $\LL_e=\RR$, pick a character $\chi:G\to U\subset\CC^\times$ such that $\LL\cong\lpc(\RR)$.
\begin{enumerate} 
\item[1.] If $\LL_e=\RR$ then $\cB$ is graded-isomorphic to one of the following:
\begin{enumerate}
\item[(a)] $\lpc(\cC(\bg_1,\bg_2,\bg_3))$ for a triple $(\bg_1,\bg_2,\bg_3)\in\bG^3$ with $\bg_1\bg_2\bg_3=\bar e$;
\item[(b)] $\lpc(\cC(\bT,\mu))$ for a subgroup $\bT\subset\bG$ isomorphic to $\ZZ_2^r$, $0\le r\le 3$, and a character $\mu:\bT\to\{\pm 1\}$.
\end{enumerate}
\item[2.] If $\LL_e\cong\CC$ then $\cB$ is graded-isomorphic to one of the following:
\begin{enumerate} 
\item[(a)] $L_\pi(\cC_\CC(\bg_1,\bg_2,\bg_3))$ for a triple $(\bg_1,\bg_2,\bg_3)\in\bG^3$ with $\bg_1\bg_2\bg_3=\bar e$;
\item[(b)] $L_\pi(\cC_\CC(\bT))$ for a subgroup $\bT\subset\bG$ isomorphic to $\ZZ_2^3$.
\end{enumerate}
\end{enumerate}
Two algebras in different items are not graded-isomorphic to one another. Two algebras in item 1(a) or 2(a) are graded-isomorphic if and only if 
their triples belong to the same $(S_3\times\ZZ_2)$-orbit. Two algebras in item 2(b) are graded-isomorphic if and only if their subgroups 
are equal. Finally, two algebras in item 1(b) are graded-isomorphic if and only if their subgroups are equal and their characters have the same restriction
to $\bT_0\bydef\ker\tau$ where $\tau:\bT\to H/H^{[2]}$ is given by Equation~\eqref{eq:def_tau}.\qed 
\end{theorem}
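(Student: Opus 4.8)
\textbf{Proof proposal for Theorem~\ref{th:alternative_iso}.}
The plan is to assemble the theorem from the pieces already established. First, I would recall the setup: since $\cB$ is graded-simple, alternative, and not associative, Kleinfeld's Theorem (as invoked in Example~\ref{ex:2}) forces the simple algebra that appears in the loop model to be an octonion algebra. The hypothesis that $\LL_e$ is finite-dimensional leaves only the two cases $\LL_e=\RR$ and $\LL_e\cong\CC$, and I would dispose of the complex case first: when $\LL_e\cong\CC$ the centroid $\LL\cong\CC H$ is split, so the ordinary ``Correspondence Theorem'' of \cite{ABFP} gives $\cB\cong L_\pi(\cC)$ with $\cC$ the unique complex Cayley algebra carrying a $\bG$-grading, and the classification of such gradings recalled in Subsection~\ref{ss:gs_alt} yields exactly forms 2(a) and 2(b), with the stated isomorphism criteria (the $(S_3\times\ZZ_2)$-orbit condition for the Cartan coarsenings, equality of subgroups for the Cayley--Dickson ones). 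Here I would note that $L_\pi$ is functorial and, by the uniqueness clause of the Correspondence Theorem together with the fact that over $\CC$ every character extends, the loop model is determined up to graded-isomorphism of $\cC$ alone.

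Next, for the case $\LL_e=\RR$: by Lemma~\ref{lm:1} the isomorphism type of $\LL=\lpc(\RR)$ depends only on $\chi|_{H_{[2]}}$, so fixing $\chi$ with $\lpc(\RR)\cong\LL$ is legitimate, and by Theorem~\ref{th:loop1} we get $\cB\cong\lpc(\cC)$ for $\cC\in\{\OO,\OO_s\}$ with some $\bG$-grading. By Theorem~\ref{th:CD_or_Cartan} and the subsequent classification up to isomorphism, $\cC$ is graded-isomorphic either to some $\cC(\bg_1,\bg_2,\bg_3)$ or to some $\cC(\bT,\mu)$, which gives forms 1(a) and 1(b). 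Then Corollary~\ref{cor:loop} tells us that $\lpc(\cA)\cong\lpc(\cA')$ as $G$-graded algebras exactly when $\cA'$ is graded-isomorphic to a twist of $\cA$, so the isomorphism problem for the loop models reduces to the twist-classification of the $\bG$-graded octonion algebras. That classification is precisely Lemmas~\ref{lm:3} and~\ref{lm:4}: Lemma~\ref{lm:3} shows twisting fixes each Cartan-type algebra and moves $\cC(\bT,\mu)$ to $\cC(\bT,\mu')$ with $\mu'=\mu\cdot(\lambda_0\circ\tau)$, and Lemma~\ref{lm:4} identifies the orbit of $\mu$ as the coset determined by $\mu|_{\bT_0}$, $\bT_0=\ker\tau$. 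Combining, two algebras in 1(a) are graded-isomorphic iff their triples lie in the same $(S_3\times\ZZ_2)$-orbit (the Cartan-grading Weyl-group action being unaffected by twists), and two algebras in 1(b) are graded-isomorphic iff $\bT=\bT'$ and $\mu|_{\bT_0}=\mu'|_{\bT_0}$.

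Finally I would check that algebras from different items cannot coincide. An algebra with $\LL_e=\RR$ cannot be graded-isomorphic to one with $\LL_e\cong\CC$ because a graded-isomorphism induces an isomorphism of centroids (hence of their identity components), and $\RR\not\cong\CC$. Within a fixed centroid case, a Cartan-type model has a nontrivial idempotent in its identity component while a Cayley--Dickson-type model is a graded-division algebra (Remark~\ref{rem:div} together with the fact, recalled in Subsection~\ref{ss:gs_alt}, that the $\ZZ_2^r$-gradings are division gradings), so 1(a) cannot meet 1(b) and 2(a) cannot meet 2(b); and the invariants $\bT\subset\bG$, $\mu$ are read off intrinsically from $\cB$ via the centroid and the squares of homogeneous elements. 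The main obstacle in all of this is purely bookkeeping rather than conceptual: one must verify that the isomorphism invariants of the $\bG$-graded octonion algebra $\cC$ that survive the passage to the loop model are exactly those listed — in particular that the twist-orbit of $(\bT,\mu)$ is faithfully reflected by $(\bT,\mu|_{\bT_0})$ and no finer data (such as $\chi$ beyond its effect on $\LL$, or the choice of section $\xi$) intrudes. This is handled by the normalization $\xi(\bar e)=e$ in Lemma~\ref{lm:3} and by Corollary~\ref{cor:loop}, so the argument goes through.
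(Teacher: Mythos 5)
Your proposal is correct and follows essentially the same route as the paper: the theorem is stated with \qed precisely because its proof is the content of Subsection~\ref{ss:gs_alt}, namely the Correspondence Theorem for the split case $\LL_e\cong\CC$, and Theorem~\ref{th:loop1} plus Corollary~\ref{cor:loop} combined with the twist computations of Lemmas~\ref{lm:3} and~\ref{lm:4} for the case $\LL_e=\RR$. Your added remarks on why algebras in different items cannot be graded-isomorphic (centroid, graded-division versus idempotents in the identity component) are correct and make explicit what the paper leaves implicit.
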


\subsection{Graded-division real alternative algebras}\label{ss:ds_alt}

The algebras in items 1(b) and 2(b) of Theorem~\ref{th:alternative_iso} are characterized by the property that they are
graded-division algebras (see Remark \ref{rem:div}). They can be obtained by 
the Cayley--Dickson doubling process over the graded-field $\LL$ as follows. First of all, the formulas 
\eqref{eq:CD_mult} and \eqref{eq:CD_norm} make sense for unital algebras with a quadratic form over any 
commutative associative ring, in particular over any graded-field $\LL$, but we also have to keep track of the gradings.
To this end, suppose $\cA$ is a graded $\LL$-module with an $\LL$-bilinear multiplication and a quadratic form
$n:\cA\to\LL$ such that $\cA$ is a unital graded algebra and $n(\cA_{g_1},\cA_{g_2})\subset\LL_{g_1g_2}$ 
for all $g_1,g_2\in G$, so the mapping $x\mapsto\bar{x}$ is degree-preserving. Given $k\in G$ and $\alpha\in\LL_{k^2}$, 
Equation~\eqref{eq:CD_mult} makes $\cB\bydef\CD(\cA,\alpha)$ a graded algebra if we define
\begin{equation}\label{eq:CD_deg}
\CD(\cA,\alpha)_g=\cA_g\oplus\cA_{gk^{-1}}w\text{ for all }g\in G.
\end{equation}
Note that $w=(0,1)$ is assigned degree $k$. Moreover, the polar form of the quadratic form $n$ defined by 
Equation~\eqref{eq:CD_norm}, satisfies the condition $n(\cB_{g_1},\cB_{g_2})\subset\LL_{g_1g_2}$ for all $g_1,g_2\in G$.
We will denote the resulting unital graded algebra with a quadratic form by $\CD(\cA,(\alpha,k))$.

For example, the $\bG$-graded algebras $\cC(\bT,\mu)$ can be obtained as follows. Take $\LL=\RR$ with trivial grading.
In the case $r=3$, if $\{\bt_1,\bt_2,\bt_3\}$ is a basis of the elementary $2$-group $\bT$, then 
\begin{equation}\label{eq:CTmu_CD}
\cC(\bT,\mu)\cong\CD\big(\RR,(-\mu(\bt_1),\bt_1),(-\mu(\bt_2),\bt_2),(-\mu(\bt_3),\bt_3)\big),
\end{equation}
where we use an abbreviation for the iterated Cayley--Dickson doubling process similar to what we had in 
Subsection~\ref{ss:CD_process}. The isomorphism \eqref{eq:CTmu_CD} remains valid in the case $r<3$ if we make the convention 
that $\bt_j=\bar{e}$ for $1\le j\le 3-r$, while the $\bt_j$ with $3-r<j\le 3$ form a basis of $\bT$.

Now let us see how the loop algebra construction interacts with the (graded) Cayley--Dickson doubling process.

\begin{lemma}\label{lm:5}
Let $\cA$ be a unital $\bG$-graded algebra with a quadratic form over $\RR$, $\LL=\lpc(\RR)$, $\alpha\in\RR^\times$ and 
$\bar{k}\in\bG$ with $\bar{k}^2=\bar{e}$. Then 
\[
\lpc\big(\CD(\cA,(\alpha,\bar{k}))\big)\cong\CD\big(\lpc(\cA),(\alpha u_k^2,k)\big),
\]
for any $k\in G$ satisfying $\pi(k)=\bar{k}$.
\end{lemma}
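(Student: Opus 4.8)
Let $k\in G$ satisfy $\pi(k)=\bar{k}$, as in the statement. Since $\bar{k}^{2}=\bar{e}$ we have $k^{2}\in H$, so $u_{k}^{2}=\gamma(k,k)\,u_{k^{2}}$ lies in $\cD_{k^{2}}\subseteq\cD_{H}=\LL=\lpc(\RR)$ and $\alpha u_{k}^{2}\in\LL_{k^{2}}$; this is what makes the right-hand side meaningful, once $\lpc(\cA)$ is viewed as a unital graded algebra over $\LL$ with the $\LL$-valued quadratic form $N(a\otimes u_{g})\bydef n(a)\,u_{g}^{2}$ for $a\in\cA_{\pi(g)}$ (well defined because $n(a)=0$ unless $\pi(g)^{2}=\bar{e}$, and then $u_{g}^{2}\in\LL$). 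Evaluating the polar form of $N$ against the identity $1=1_{\cA}\otimes u_{e}$ shows that the standard involution of $\lpc(\cA)$ is entrywise, $\overline{a\otimes u_{g}}=\bar{a}\otimes u_{g}$. My plan is to write down an explicit graded isomorphism between $\cE\bydef\lpc\big(\CD(\cA,(\alpha,\bar{k}))\big)$ and $\cE'\bydef\CD\big(\lpc(\cA),(\alpha u_{k}^{2},k)\big)$.

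First I would note that the inclusion $\cA\hookrightarrow\CD(\cA,(\alpha,\bar{k}))$ is a norm-preserving monomorphism of $\bG$-graded algebras, so applying $\lpc$ identifies $\lpc(\cA)$ with a graded subalgebra of $\cE$ that carries the quadratic form $N$ and the involution above. Let $w$ be the degree-$\bar{k}$ Cayley--Dickson generator of $\CD(\cA,(\alpha,\bar{k}))$ and set $\omega\bydef w\otimes u_{k}\in\cE_{k}$. Comparing homogeneous components via \eqref{eq:CD_deg} gives $\cE_{g}=(\cA_{\pi(g)}\otimes u_{g})\oplus(\cA_{\pi(g)\bar{k}^{-1}}w\otimes u_{g})$; since the second summand is exactly $(\lpc(\cA)\omega)_{g}$, we obtain $\cE=\lpc(\cA)\oplus\lpc(\cA)\omega$ as graded vector spaces. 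On the other side, $\cE'=\lpc(\cA)\oplus\lpc(\cA)w'$ by \eqref{eq:CD_deg}, where $w'$ is the degree-$k$ Cayley--Dickson generator of $\cE'$. I would then define $\Phi\colon\cE\to\cE'$ by $\Phi(x_{0}+x_{1}\omega)=x_{0}+x_{1}w'$ for $x_{0},x_{1}\in\lpc(\cA)$; it is a well-defined degree-preserving linear bijection restricting to the identity on $\lpc(\cA)$.

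To check that $\Phi$ is multiplicative, expand a product in $\cE$ along $\cE=\lpc(\cA)\oplus\lpc(\cA)\omega$ and compare with \eqref{eq:CD_mult} for $\cE'$: it is enough to verify $\omega^{2}=\alpha u_{k}^{2}\cdot 1$ together with
\[
x(y\omega)=(yx)\omega,\qquad (x\omega)y=(x\bar{y})\omega,\qquad (x\omega)(y\omega)=(\alpha u_{k}^{2})(\bar{y}x)\qquad(x,y\in\lpc(\cA)).
\]
The first is immediate: $w^{2}=\alpha 1_{\cA}$ by \eqref{eq:CD_mult}, so $\omega^{2}=w^{2}\otimes u_{k}^{2}=\alpha\gamma(k,k)(1_{\cA}\otimes u_{k^{2}})=\alpha u_{k}^{2}\cdot 1$. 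The three displayed identities come from tensoring up the relations $a(bw)=(ba)w$, $(aw)b=(a\bar{b})w$, $(aw)(bw)=\alpha\bar{b}a$ in $\CD(\cA,(\alpha,\bar{k}))$ (all cases of \eqref{eq:CD_mult}): writing $x=a\otimes u_{g_{1}}$, $y=b\otimes u_{g_{2}}$, expanding, and rewriting each $u_{g}u_{g'}$ as $\gamma(g,g')u_{gg'}$, the accumulated factors of $\gamma$ cancel because $\gamma$ is symmetric (as $G$ is abelian) and $\gamma(g_{1},g_{2})=z_{g_{1}}z_{g_{2}}/z_{g_{1}g_{2}}$. For instance, the third identity reduces to $\gamma(g_{1},k)\gamma(g_{2},k)\gamma(g_{1}k,g_{2}k)=\gamma(k,k)\gamma(k^{2},g_{1}g_{2})\gamma(g_{2},g_{1})$, and under $\gamma(g_{1},g_{2})=z_{g_{1}}z_{g_{2}}/z_{g_{1}g_{2}}$ both sides become $z_{g_{1}}z_{g_{2}}z_{k}^{2}/z_{g_{1}g_{2}k^{2}}$; the first two identities need only the symmetry of $\gamma$ and the $2$-cocycle identity. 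The same bookkeeping shows that $\Phi$ also intertwines $N$ with the quadratic form of $\cE'$, so $\Phi$ is the desired isomorphism of $G$-graded algebras (which, moreover, matches the quadratic forms).

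I do not anticipate a genuine obstacle: the content is organizational. The two points demanding care are keeping the grading shift straight -- the Cayley--Dickson scalar $\alpha$ sits in degree $\bar{k}^{2}=\bar{e}$ over $\RR$, while its loop-algebra incarnation $u_{k}^{2}$ sits in degree $k^{2}\in H$, which is precisely why the new parameter is $(\alpha u_{k}^{2},k)$ -- and checking that the quadratic form carried over to $\lpc(\cA)$ has an entrywise standard involution, after which the relation $\omega x=\bar{x}\omega$ (the $x=1$ case of the second identity above) falls out of $wa=\bar{a}w$ and the symmetry of $\gamma$. (One could instead base-change to $\CC$, where $L_{\pi}$ visibly commutes with Cayley--Dickson doubling, and descend through the $\chi$-twisted Galois action; but the direct check above is shorter, and avoids normalizing $u_{k}^{2}$ away and reinstating it.)
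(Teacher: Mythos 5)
Your proof is correct and is essentially the paper's: your map $\Phi$ (sending $x_0+x_1\omega\mapsto x_0+x_1w'$ with $\omega=w\otimes u_k$) coincides with the paper's isomorphism $(a+bw)\otimes u_g\mapsto a\otimes u_g+(b\otimes u_gu_k^{-1})w$, and your decomposition $\cE=\lpc(\cA)\oplus\lpc(\cA)\omega$ is exactly the paper's comparison of homogeneous components via \eqref{eq:CD_deg}. The only difference is how multiplicativity is checked: where you verify the Cayley--Dickson relations and cocycle identities by hand, the paper factors the map through the two standard $\cD$-algebra isomorphisms $\CD(\cA,\alpha)\otimes\cD\cong\CD(\cA\otimes\cD,\alpha)$ and $\CD(\cA\otimes\cD,\alpha)\cong\CD(\cA\otimes\cD,\alpha u_k^2)$, letting the commutativity and associativity of $\cD=\RR^\gamma G$ absorb the bookkeeping you carry out explicitly.
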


\begin{proof}
In view of \eqref{eq:CD_deg}, applied first with $\bG$ and then with $G$, 
we have an isomorphism of $G$-graded vector spaces 
$\lpc\big(\CD(\cA,(\alpha,\bar{k}))\big)\to\CD\big(\lpc(\cA),(\alpha u_k^2,k)\big)$ given by 
\[
(a+bw)\otimes u_g\mapsto a\otimes u_g+(b\otimes u_g u_k^{-1})w\text{ for all }
a\in\cA_{\pi(g)},\, b\in\cA_{\pi(g)\bar{k}^{-1}},\, g\in G.
\]
This is also an isomorphism of $\LL$-algebras with quadratic forms because it is the restriction of the mapping 
$\CD(\cA,\alpha)\otimes\cD\to\CD(\cA\otimes\cD,\alpha u_k^2)$, where $\cD=\RR^\gamma G$, obtained by composing 
the $\cD$-algebra isomorphism $\CD(\cA,\alpha)\otimes\cD\to\CD(\cA\otimes\cD,\alpha)$ sending 
$(a+bw)\otimes u_g\mapsto a\otimes u_g+(b\otimes u_g)w$ and the $\cD$-algebra isomorphism 
$\CD(\cA\otimes\cD,\alpha)\to\CD(\cA\otimes\cD,\alpha u_k^2)$ sending $(x,y)\mapsto (x,yu_k^{-1})$,
for all $x,y\in\cA\otimes\cD$.
\end{proof}

Applying this lemma repeatedly, we obtain from \eqref{eq:CTmu_CD} the following models for the algebras in 
item 1(b) of Theorem \ref{th:alternative_iso}:
\[
\lpc(\cC(\bT,\mu))\cong\CD\big(\LL,(-\mu(\bt_1)u_{t_1}^2,t_1),(-\mu(\bt_2)u_{t_2}^2,t_2),(-\mu(\bt_3)u_{t_3}^2,t_3)\big),
\]
for any $t_j\in G$ satisfying $\pi(t_j)=\bt_j$. Note that $u_{t_j}^2=\gamma(\bt_j,\bt_j)u_{t_j^2}\in\LL$.

This can be simplified with a special choice of the elements $t_j$.
Indeed, let $r_0$ be the rank of $\bT_0$ and pick the basis of $\bT$ so that the $\bt_j$ with $3-r<j\le 3-r+r_0$ 
form a basis of $\bT_0$. Then the $t_j$ with $3-r< j\le 3-r+r_0$ can be chosen to satisfy $t_j^2=e$, so that 
the subgroup $T_0$ of $G$ generated by these elements is mapped by $\pi$ isomorphically onto $\bT_0$. Moreover, for 
these $t_j$, we have $u_{t_j}^2=\chi(t_j)\in\{\pm 1\}$. Unless $\LL\cong\RR H$, there exists $h\in H_{[2]}$ such that 
$\chi(h)=-1$ (see Lemma~\ref{lm:1}), which can be used to adjust the $t_j$ to make $u_{t_j}^2=\mu(\bt_j)$.
If $\LL\cong\RR H$ then we can take $\chi$ to be trivial, so $u_{t_j}^2=1$. Now, for $1\le j\le 3-r$, we will take $t_j=e$. 
As to the remaining case, $3-r+r_0<j\le 3$, it does not matter how we choose these $t_j$ because, in view of the isomorphism 
condition in item 1(b), we may change $\mu(\bt_j)$ arbitrarily without affecting the isomorphism class of 
the graded algebra. Thus we obtain (cf. Remark \ref{rem:mod_over_centroid_nonsplit}):
\begin{equation}\label{eq:LCTmu_CD}
\lpc(\cC(\bT,\mu))\cong\begin{cases}
\cC(T_0,\mu_0)\otimes\LL & \text{if } r_0=r;\\
\CD\big(\cQ(T_0,\mu_0)\otimes\LL,(\pm u_{t_3^2},t_3)\big) & \text{if } r_0=r-1;\\
\CD\big(\cK(T_0,\mu_0)\otimes\LL,(\pm u_{t_2^2},t_2),(\pm u_{t_3^2},t_3)\big) & \text{if } r_0=r-2;\\
\CD\big(\LL,(\pm u_{t_1^2},t_1),(\pm u_{t_2^2},t_2),(\pm u_{t_3^2},t_3)\big) & \text{if } r_0=r-3;\\
\end{cases}
\end{equation}
where the signs can be chosen arbitrarily, and $\mu_0=\mu\circ\pi|_{T_0}$ if $\LL\cong\RR H$ and trivial otherwise. 

Similar models can be obtained for $L_\pi(\cC_\CC(\bT))$ in item 2(b) of Theorem \ref{th:alternative_iso}, 
but we have $\LL\cong\CC H$, there is no $\mu_0$, $r$ must be $3$, and we have $t_j^2$ instead of $\pm u_{t_j^2}$.

We will now obtain a classification of graded-division real alternative algebras up to equivalence. First of all,
we replace the grading group $G$ by the support $T$, so equivalence becomes the same as weak isomorphism 
(see Subsection \ref{ss:wiso_equiv}), and we have to investigate the effect of the automorphisms of the group $T$ on 
the graded algebras $\cB$ in items 1(b) and 2(b). For any $\alpha\in\Aut(T)$, we have a canonical isomorphism of graded
algebras ${}^\alpha C(\cB)\cong C({}^\alpha\cB)$. Hence, we fix a representative in each $\Aut(T)$-orbit of 
the graded-isomorphism classes of graded-fields (over the group $T$) so that we may consider the graded-isomorphism class of 
$\LL=C(\cB)$ fixed and restrict ourselves to those $\alpha\in\Aut(T)$ that satisfy ${}^\alpha\LL\cong\LL$. 
In item 2(b), this latter condition amounts to $\alpha(H)=H$, while in item 1(b) it says more: 
$\alpha(H)=H$ and $\chi\circ\alpha|_{H_{[2]}}=\chi|_{H_{[2]}}$ (see Lemma \ref{lm:1}). 
In any case, $\alpha$ induces an automorphism $\bar{\alpha}$ of $\bT=T/H$, and we have $\bar{\alpha}(\bT_0)=\bT_0$.

Consider $\cB$ in item 1(b), so $\cB\cong\lpc(\cC(\bT,\mu))$. Then ${}^\alpha\cB$ is given by the right-hand side of 
\eqref{eq:LCTmu_CD} in which $\LL$ is replaced by $\LL'\bydef{}^\alpha\LL$, 
$T_0$ by $T_0'\bydef\alpha(T_0)$, $\mu_0$ by $\mu_0'\bydef\mu_0\circ\alpha^{-1}|_{T_0'}$, and $(\pm u_{t_j^2},t_j)$ by 
$(\pm u_{t_j^2},\alpha(t_j))$. Note that $\LL'_{\alpha(h)}=\LL_h=\RR u_h$ for all $h\in H$, and there exists an
isomorphism of graded algebras $\psi:\LL'\to\LL$, which must therefore send $u_h$ to either $u_{\alpha(h)}$ or 
to $-u_{\alpha(h)}$ (depending on $h$). Extending the isomorphism $\id\otimes\psi$ to the Cayley--Dickson doubles, 
we obtain
\[
{}^\alpha\cB\cong\begin{cases}
\cC(T_0',\mu_0')\otimes\LL & \text{if } r_0=r;\\
\CD\big(\cQ(T_0',\mu_0')\otimes\LL,(\pm u_{(t'_3)^2},t'_3)\big) & \text{if } r_0=r-1;\\
\CD\big(\cK(T_0',\mu_0')\otimes\LL,(\pm u_{(t'_2)^2},t'_2),(\pm u_{(t'_3)^2},t'_3)\big) & \text{if } r_0=r-2;\\
\CD\big(\LL,(\pm u_{(t'_1)^2},t'_1),(\pm u_{(t'_2)^2},t'_2),(\pm u_{(t'_3)^2},t'_3)\big) & \text{if } r_0=r-3;\\
\end{cases}
\]
where $t'_j=\alpha(t_j)$. These elements have the same properties as the $t_j$ in \eqref{eq:LCTmu_CD}, so the net 
effect of $\alpha$ on the graded-isomorphism class of $\cB$ is the replacement of $(T_0,\mu_0)$ by $(T_0',\mu_0')$.
Now, $T_0$ can be mapped onto any other complement of $H$ in $\pi^{-1}(\bT_0)$ by an automorphism of the latter that 
restricts to the identity on $H$ and therefore extends to an automorphism $\alpha$ of $T$ as above. 
Moreover, any automorphism of $T_0$ extends in the same manner to an automorphism $\alpha$. It follows that there is 
only one equivalence class of $\cB$ when we have fixed $T$ and the graded-isomorphism class of $\LL$ other than $\RR H$, 
and two equivalence classes with a fixed $T$ and $\LL\cong\RR H$: one corresponding to $\mu_0=1$ and 
the other to $\mu_0\ne 1$, where $1$ denotes the trivial character.

For $\cB$ in item 2(b), i.e., $\cB\cong L_\pi(\cC_\CC(\bT))$, we can use the same arguments or apply the canonical
isomorphism ${}^\alpha(L_\pi(\cA))\cong L_\pi({}^{\bar{\alpha}}\cA)$ defined by $a\otimes g\mapsto a\otimes\alpha(g)$,
for any $\bG$-graded algebra $\cA$. (This isomorphism could also be applied for item 1(b) in the case of split centroid.) 

To summarize our classification, it is convenient to introduce the following notation. Let $T$ be an abelian group with 
a subgroup $H$ such that $\bT\bydef T/H$ is an elementary $2$-group of rank $r\in\{0,1,2,3\}$. Let $\cO$ be an orbit 
of nontrivial characters $H_{[2]}\to\{\pm 1\}$ under the action of the stabilizer of $H$ in $\Aut(T)$. For each such triple
$(T,H,\cO)$, we fix a representative $\chi_0\in\cO$ and extend it to a character $\chi:T\to U\subset\CC^\times$. Let
\[
\mathcal{DA}(T,H,\cO)\bydef\lpc(\cC(\bT,1)),
\]
where $\pi:T\to\bT$ is the natural homomorphism. For each pair $(T,H)$ as above, fix a character $\mu:\bT\to\{\pm 1\}$ 
whose restriction to $\bT_0\bydef\ker\tau$ is nontrivial, where $\tau:\bT\to H/H^{[2]}$ is given by 
Equation~\eqref{eq:def_tau}, and let
\[
\mathcal{DA}(T,H)\bydef L_\pi(\cC(\bT,1))\text{ and }\mathcal{DA}(T,H)'\bydef L_\pi(\cC(\bT,\mu)).
\]
(If $\bT_0$ is trivial then only $\mathcal{DA}(T,H)$ is defined.) Finally, for each pair $(T,H)$ with $r=3$, let
\[
\mathcal{DA}_\CC(T,H)\bydef L_\pi(\cC_\CC(\bT)).
\]
Then we have the following:

\begin{theorem}\label{th:alternative_equ}
Let $\cB$ be a real graded-division algebra with support $T$ that is alternative but not associative. 
Let $\LL=C(\cB)$ and assume that $\LL_e$ is finite-dimensional, so it is either $\RR$ or $\CC$, 
and let $H$ be the support of $\LL$.
\begin{enumerate} 
\item[1.] If $\LL_e=\RR$ and $\LL\not\cong\RR H$, 
then $\cB$ is equivalent to $\mathcal{DA}(T,H,\cO)$ for a unique orbit $\cO$
of nontrivial characters $H_{[2]}\to\{\pm 1\}$ under the action of the stabilizer of $H$ in $\Aut(T)$.
\item[2.] If $\LL_e=\RR$ and $\LL\cong\RR H$, then $\cB$ is equivalent to $\mathcal{DA}(T,H)$ or $\mathcal{DA}(T,H)'$, 
but not both.
\item[3.] If $\LL_e\cong\CC$, then $\cB$ is equivalent to $\mathcal{DA}_\CC(T,H)$.\qed
\end{enumerate}
\end{theorem}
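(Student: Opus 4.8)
The plan is to reduce everything to the explicit Cayley--Dickson models derived above and then to chase the action of $\Aut(T)$ through them. First, since $\cB$ is a graded-division algebra that is alternative but not associative, Remark~\ref{rem:div} together with Theorem~\ref{th:alternative_iso} shows that $\cB$ is graded-isomorphic to an algebra of the form $\lpc(\cC(\bT,\mu))$ or $L_\pi(\cC_\CC(\bT))$ (items 1(b) and 2(b) there). Replacing the grading group by the support $T$ — after which graded-equivalence coincides with graded-weak-isomorphism — we may therefore take $\cB$ to be one of the iterated Cayley--Dickson doubles listed in \eqref{eq:LCTmu_CD}, or its complex analog. So the task becomes: classify those algebras up to graded-isomorphism and up to the action of $\Aut(T)$.

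Next I would restrict attention to the automorphisms that matter. The canonical isomorphism ${}^\alpha C(\cB)\cong C({}^\alpha\cB)$ shows that the graded-isomorphism class of $\LL=C(\cB)$ is an invariant of $\cB$ up to the $\Aut(T)$-action; fixing one representative of $\LL$ in each orbit, it suffices to consider $\alpha\in\Aut(T)$ with ${}^\alpha\LL\cong\LL$. By the description of graded-fields (Example~\ref{ex:1}) and by Lemma~\ref{lm:1}, this condition reads $\alpha(H)=H$ when $\LL_e\cong\CC$, and $\alpha(H)=H$ together with $\chi\circ\alpha|_{H_{[2]}}=\chi|_{H_{[2]}}$ when $\LL_e=\RR$. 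Any such $\alpha$ induces $\bar{\alpha}\in\Aut(\bT)$ with $\bar{\alpha}(\bT_0)=\bT_0$, where $\bT_0=\ker\tau$ and $\tau$ is as in \eqref{eq:def_tau}.

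The computational heart of the argument is to substitute such an $\alpha$ into \eqref{eq:LCTmu_CD}. Tracking the construction, ${}^\alpha\cB$ is given by the same formula with $\LL$ replaced by ${}^\alpha\LL$, $T_0$ by $\alpha(T_0)$, $\mu_0$ by $\mu_0\circ\alpha^{-1}$, and each parameter $(\pm u_{t_j^2},t_j)$ by $(\pm u_{\alpha(t_j)^2},\alpha(t_j))$. Choosing a graded isomorphism $\psi\colon{}^\alpha\LL\to\LL$ (which necessarily sends $u_h\mapsto\pm u_{\alpha(h)}$, the sign depending on $h$) and extending $\id\otimes\psi$ through the graded Cayley--Dickson doubling, one identifies ${}^\alpha\cB$ with the right-hand side of \eqref{eq:LCTmu_CD} built over $\LL$ itself, with $T_0$ replaced by $\alpha(T_0)$ and $\mu_0$ by $\mu_0\circ\alpha^{-1}$; the signs produced by $\psi$ cause no trouble because, as already observed, the signs in \eqref{eq:LCTmu_CD} may be chosen at will. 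Hence the net effect of $\alpha$ on the graded-isomorphism class is exactly the replacement of the pair $(T_0,\mu_0)$ by $(\alpha(T_0),\mu_0\circ\alpha^{-1})$. The complex case 2(b) is handled in the same way, or more quickly via the canonical isomorphism ${}^\alpha(L_\pi(\cA))\cong L_\pi({}^{\bar{\alpha}}\cA)$.

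Finally I would count orbits and verify distinctness. Every complement $T_0$ of $H$ in $\pi^{-1}(\bT_0)$, and every automorphism of $T_0$, is realized by some admissible $\alpha$ acting trivially on $H$: here one uses that $\bT=T/H$ is an elementary abelian $2$-group, so $\bT_0$ is a direct summand of $\bT$, together with $\bT_0=\ker\tau$, which make the required extensions of automorphisms of $\pi^{-1}(\bT_0)$ to all of $T$ available. Consequently, with $T$ and the graded-isomorphism class of $\LL$ fixed, there is a single equivalence class when $\LL\not\cong\RR H$ or $\LL_e\cong\CC$ (in which case $\mu_0$ is forced to be trivial), and exactly two when $\LL\cong\RR H$, distinguished by whether $\mu_0$ is trivial — these being $\mathcal{DA}(T,H)$ and $\mathcal{DA}(T,H)'$ (only the former when $\bT_0$ is trivial). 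The three cases of the theorem are mutually exclusive because $\LL$ is itself a graded-weak-isomorphism invariant of $\cB$ and the trichotomy ``$\LL_e\cong\CC$'' / ``$\LL_e=\RR$, $\LL\cong\RR H$'' / ``$\LL_e=\RR$, $\LL\not\cong\RR H$'' is relabeling-invariant; within the first case, distinct orbits $\cO$ of $\chi|_{H_{[2]}}$ under the stabilizer of $H$ in $\Aut(T)$ yield non-equivalent algebras by the relabeled form of Lemma~\ref{lm:1}. Matching these normal forms with the definitions of $\mathcal{DA}(T,H,\cO)$, $\mathcal{DA}(T,H)$, $\mathcal{DA}(T,H)'$ and $\mathcal{DA}_\CC(T,H)$ gives the statement. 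The step I expect to be most delicate is the third one — propagating $\psi$ correctly through the iterated graded Cayley--Dickson construction and confirming that all parameters, and in particular the signs, transform exactly as claimed; the extension-of-automorphisms point in the last step is the other place where the elementary-abelian-$2$-group structure of $\bT$ must be invoked explicitly rather than taken for granted.
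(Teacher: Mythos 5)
Your proposal is correct and follows essentially the same route as the paper: reduce to the normal forms of Equation~\eqref{eq:LCTmu_CD} (and its complex analog), show that an admissible $\alpha\in\Aut(T)$ only changes the data $(T_0,\mu_0)$ to $(\alpha(T_0),\mu_0\circ\alpha^{-1})$ after absorbing the sign ambiguities through a graded isomorphism ${}^\alpha\LL\to\LL$, and then observe that all complements $T_0$ and all automorphisms of $T_0$ are realized, leaving only the triviality of $\mu_0$ (in the split case) and the orbit of $\chi|_{H_{[2]}}$ (in the nonsplit case) as invariants. The points you flag as delicate — propagating $\psi$ through the iterated doubling and extending automorphisms from $\pi^{-1}(\bT_0)$ to $T$ — are exactly the steps the paper relies on as well.
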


We can make the classification more explicit in the finite-dimensional case by classifying the pairs $(T,H)$ and the triples 
$(T,H,\chi_0)$ up to isomorphism. First of all, we have a canonical decomposition $T=T'\times T''$ where $T'$ is a $2$-group
and $T''$ has odd order. Then $H=H'\times T''$ and $H_{[2]}=H'_{[2]}$, so the problem reduces to $2$-groups. 

A subset $\{g_1,\ldots,g_s\}$ of a finite abelian group $G$ will be called a \emph{basis} if the orders of $g_j$ 
are nontrivial prime powers and $G=\langle g_1\rangle\times\cdots\times\langle g_s\rangle$. 
If $G$ is an elementary $p$-group, this is indeed a basis of $G$ as a vector space over the field of $p$ elements. 
We leave the proof of the following result to the reader.

\begin{lemma}\label{lm:6}
Let $\pi:G\to\bG$ be an epimorphism of finite abelian $p$-groups where $\bG$ is elementary. 
Then there exists a basis $\{g_1,\ldots,g_s\}$ of $G$ such that the elements $\pi(g_j)$ that are different from 
the identity form a basis of $\bG$.\qed
\end{lemma}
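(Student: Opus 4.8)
The plan is to induct on the order of the $p$-group $G$, or equivalently to reduce to a single ``building step'' and then iterate. First I would handle the base case and the trivial case $\bG=\{\bar e\}$ separately: if $\bG$ is trivial, then any basis of $G$ works vacuously (no $\pi(g_j)$ is different from the identity). So assume $\bG\neq\{\bar e\}$, pick any nontrivial $\bar y\in\bG$, and — since $\bG$ is elementary — extend $\{\bar y\}$ to a basis of $\bG$. The key point is to lift $\bar y$ to a suitable element of $G$ that can be made part of a basis of $G$. Among all $g\in\pi^{-1}(\bar y)$, choose one of \emph{minimal order}; call it $g_1$ and write $o(g_1)=p^k$. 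I claim that $g_1$ generates a direct summand of $G$. Indeed, the standard criterion (applicable because $G$ is a finite abelian $p$-group) is that $\langle g_1\rangle$ is a direct summand provided $g_1$ has maximal order among the elements of its coset modulo... — more precisely, the relevant fact is: if $g_1\in G$ has order $p^k$ and $g_1\notin p\cdot G + (\text{anything of smaller order})$, equivalently if $g_1$ is not a $p$-th power times a lower-order correction, then $\langle g_1\rangle$ splits off. The minimality of $o(g_1)$ in its $\pi$-fiber is exactly what guarantees this: if we had $g_1=p g' + h_0$ with $h_0$ of order dividing $p^{k-1}$ and $g'\in G$, one massages this to produce an element of $\pi^{-1}(\bar y)$ of order $<p^k$, contradicting minimality. (Here I am using additive notation; in the paper's multiplicative notation $p\cdot G$ becomes $G^{[p]}$ and $G_{[p^{k-1}]}$ the elements killed by $p^{k-1}$.)

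Granting that $\langle g_1\rangle$ is a direct summand, write $G=\langle g_1\rangle\times G'$ for some subgroup $G'$. The next step is to push the problem down to $G'$. Let $\bG'=\pi(G')$; since $\pi$ is surjective and $\pi(g_1)=\bar y$ has order $p$ in the elementary group $\bG$, one checks that $\bG=\langle\bar y\rangle\times\bG'$, so $\bG'$ is again elementary, of rank one less, and $\pi|_{G'}:G'\to\bG'$ is an epimorphism of strictly smaller finite abelian $p$-groups. By the induction hypothesis there is a basis $\{g_2,\ldots,g_s\}$ of $G'$ such that the nonidentity elements among $\pi(g_2),\ldots,\pi(g_s)$ form a basis of $\bG'$. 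Then $\{g_1,g_2,\ldots,g_s\}$ is a basis of $G$ (since $G=\langle g_1\rangle\times G'$ and each $g_j$ has prime-power order), and the nonidentity elements among $\pi(g_1)=\bar y,\pi(g_2),\ldots,\pi(g_s)$ are exactly $\bar y$ together with a basis of $\bG'$, which is a basis of $\bG=\langle\bar y\rangle\times\bG'$. This completes the induction.

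I expect the main obstacle to be the splitting claim: verifying carefully that an element of minimal order in its $\pi$-fiber generates a direct summand of $G$. One has to be a little careful because minimality is imposed \emph{relative to the fiber} $\pi^{-1}(\bar y)$, not over all of $G$, so the usual ``element of maximal order splits off'' theorem does not apply verbatim; instead one argues by contradiction, assuming $g_1\in G^{[p]}\cdot(\ker\pi\cap\text{lower order})$ or more simply that $g_1 = p h$ for some $h$ modulo a correction, and then builds a lower-order preimage of $\bar y$. The compatibility bookkeeping for $\bG=\langle\bar y\rangle\times\bG'$ is routine once one notes that $\ker(\pi|_{G'})\subseteq\ker\pi$ and that $\bar y$ has order exactly $p$. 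Everything else — that a product of the prescribed cyclic factors is direct, that bases of the pieces assemble to a basis of the whole — is immediate from the definition of basis given just before the lemma.
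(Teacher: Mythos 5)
The paper actually leaves this lemma to the reader, so there is no in-paper proof to compare with; judged on its own, your argument has one sound half and one genuine gap. The sound half is the splitting step: if $g_1\in\pi^{-1}(\bar{y})$ has minimal order $p^k$ and (additively) $g_1=px+h$ with $p^{k-1}h=0$, then $\pi(h)=\pi(g_1)-p\pi(x)=\bar{y}$ because $\bG$ is elementary, so $h$ would be a lift of smaller order; hence $p^{k-1}g_1\notin p^kG$, so $\langle g_1\rangle$ is pure and therefore a direct summand (this last implication is the standard purity criterion, which should be quoted or proved, since, as you note yourself, the usual ``maximal order splits off'' theorem does not apply verbatim). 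The gap is in the induction step: for an \emph{arbitrary} complement $G'$ with $G=\langle g_1\rangle\times G'$ it is simply false that $\bG=\langle\bar{y}\rangle\times\pi(G')$. Take $G=\ZZ_p\times\ZZ_p$, $\bG=\ZZ_p$, $\pi(x,y)=x+y$, $\bar{y}=\bar{1}$, $g_1=(1,0)$ (which does have minimal order in its fiber) and $G'=0\times\ZZ_p$; then $\pi(G')=\bG\ni\bar{y}$. Worse, your induction then assembles the basis $\{(1,0),(0,1)\}$ of $G$, whose images $\bar{1},\bar{1}$ are both nontrivial but are not a basis of $\bG$ --- so the procedure as written can output a basis violating the conclusion. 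The ``routine bookkeeping'' you defer is exactly where the remaining content of the lemma lies.

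The argument can be repaired along your lines, but not for free. One way: apply the induction hypothesis not to $\pi|_{G'}:G'\to\pi(G')$ but to the composite $G'\to\bG\to\bG/\langle\bar{y}\rangle$, and afterwards correct each $g_j$ ($j\ge 2$) whose image lies in $\langle\bar{y}\rangle\setminus\{e\}$, say $\pi(g_j)=\bar{y}^{m}$ with $p\nmid m$, by replacing it with $g_jg_1^{-m}$. Minimality of $o(g_1)$ among lifts of $\bar{y}$ (hence among lifts of any $\bar{y}^{m}$ with $p\nmid m$, via $g\mapsto g^{m'}$) gives $o(g_1)\le o(g_j)$, so this replacement changes neither the orders nor the generating property, and the new family is again a basis of $G$; its nontrivial images are $\bar{y}$ together with elements projecting to a basis of $\bG/\langle\bar{y}\rangle$, which do form a basis of $\bG$. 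Alternatively one can argue that the complement $G'$ may be \emph{chosen} so that $\pi(G')$ avoids $\bar{y}$, but that choice also requires a justification that your write-up does not supply.
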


In particular, our pairs $(T,H)$ are described as follows: there is a basis of $T$ in which $r$ elements are marked, 
the orders of the marked elements are powers of $2$, and $H$ is generated by the unmarked elements and 
the squares of the marked elements. Up to isomorphism, a pair $(T,H)$ is represented by a tuple of nontrivial prime powers 
(the orders of the basis elements) in which $r$ powers of $2$ are marked: for example, the pair 
$(\ZZ_2\times\ZZ_2\times\ZZ_4,0\times\ZZ_2\times 2\ZZ_4)$ is represented by $(\underline{2},2,\underline{4})$. 
Two pairs are isomorphic if and only if the tuples are the same up to permutation. 
(The ``only if'' part can be seen by looking at the quotients $T^{[2^k]}/H^{[2^k]}$ for $k=1,2,\ldots$)

Now we want to include $\chi_0$ into consideration. The classification of pairs $(H,\chi_0)$ up to isomorphism is equivalent 
to the classifications of graded-fields in Example \ref{ex:1} up to equivalence (which was done in \cite{BZGRDA}).
Such a pair is described by the tuple consisting of the orders of the basis elements where each power of $2$ 
is given a sign according to the value of $\chi_0$ on the unique element of order $2$ in the cyclic group generated 
by the basis element. Since we consider nontrivial $\chi_0$, at least one entry of the tuple must have negative sign. 
It turns out that, by changing the basis, one can always achieve that there is exactly one negative sign. This is done 
using the following moves: if two distinct basis elements $h_i$ and $h_j$ with $o(h_i)=2^m\le 2^n=o(h_j)$ give 
negative sign, we replace $h_i$ with $h_i h_j^{2^{n-m}}$, which gives positive sign. Hence, a pair $(H,\chi_0)$
is represented, up to isomorphism, by a tuple of nontrivial prime powers in which exactly one power of $2$ is given 
negative sign: for example, the tuple $(2,-8)$ represents the pair $(\ZZ_2\times\ZZ_8,\chi_0)$ where 
$\chi_0((\bar{1},\bar{0}))=1$ and $\chi_0((\bar{0},\bar{4}))=-1$. 
Two pairs are isomorphic if and only if the tuples are the same up to permutation. 
(The ``only if'' part can be seen by looking at the restrictions of $\chi_0$ to the subgroups $H^{[2^k]}\cap H_{[2]}$ 
for $k=1,2,\ldots$) Hence, every graded-field in Example \ref{ex:1} is equivalent to either $\RR H$ (corresponding 
to the trivial $\chi_0$) or $\RR K\ot(-n)$ where $H\cong K\times\ZZ_{2^n}$ and the $\ZZ_{2^{|m|}}$-graded 
algebra $(m)$ is defined for any $m\in\ZZ$ as follows: 
\[
(n)\bydef\RR[x]/(x^{2^n}-1)\text{ and }(-n)\bydef\RR[x]/(x^{2^n}+1)\text{ where }\deg x=\bar{1}\in\ZZ_{2^n}.
\]
Note that $(0)=\RR$ (trivially graded) and, if $H$ is a $2$-group, $\RR H$ is equivalent to the tensor product of 
graded algebras of the form $(n)$, $n\in\NN$.

Finally, for the triples $(T,H,\chi_0)$, we combine the previous arguments: we have a basis of $H$ in which $r_0\le r$ 
elements are marked (being the nontrivial squares of the marked basis elements of $T$), and each 
basis element gives a sign. We can reduce the number of negative signs to one by applying the same moves as above to the 
basis elements of $T$, except that we are not allowed to replace $t_i$ with $t_i t_j^{2^{n-m}}$ if $t_i$ is in $H$, 
$t_j$ is not in $H$, and $m=n$. This is not an obstacle, because in this situation we can replace $t_j$ with $t_i t_j$.
Therefore, the triples $(T,H,\chi_0)$ are classified by tuples of nontrivial prime powers in which $r$ powers of $2$ 
are marked and one of the powers of $2$ (marked or unmarked) is given negative sign. 
This labeling is arbitrary except that a marked power of $2$ can be given negative sign only if it is at least $4$.

Now we can describe more explicitly the finite-dimensional graded algebras in Theorem \ref{th:alternative_equ}. For 
integers $n_1,n_2,n_3$, consider $T=\ZZ_{2^{|n_1|+1}}\times\ZZ_{2^{|n_2|+1}}\times\ZZ_{2^{|n_3|+1}}$ and 
its standard basis $t_1=(\bar{1},\bar{0},\bar{0})$, $t_2=(\bar{0},\bar{1},\bar{0})$, $t_3=(\bar{0},\bar{0},\bar{1})$.
Define the following $T$-graded algebra:
\[
\RR\{n_1,n_2,n_3\}\bydef\CD\big(\LL,(x_1,t_1),(x_2,t_2),(x_3,t_3)\big)\text{ where }
\LL=(n_1)\otimes(n_2)\otimes(n_3),
\]
with $x_j$ being the generator of the $j$-th factor of $\LL$, with $\deg x_j=2t_j$. Thus, the Cayley--Dickson generators 
satisfy $u^2=x_1$, $v^2=x_2, w^2=x_3$ and are assigned degrees $\deg u=t_1$, $\deg v=t_2$, $\deg w=t_3$. Here we make 
the convention that $x_j=-1$ if $n_j=0$. The algebra $\RR\{n_1,n_2,n_3\}$ falls into 
item 1 of Theorem \ref{th:alternative_equ} if at least one $n_j<0$ (we may assume without loss of generality 
that exactly one) and into item 2 otherwise. The pair $(T,H)$ and the orbit of $\chi_0$  
are represented by the tuple $(\pm\underline{2}^{|n_1|+1},\pm\underline{2}^{|n_2|+1},\pm\underline{2}^{|n_3|+1})$ 
where the signs are given as follows: plus if $n_j\ge 0$ and minus if $n_j<0$. 
We define in a similar manner the graded algebras
\begin{align*}
\CC\{n_2,n_3\}&\bydef\CD\big(\CC\otimes\LL,(x_2,t_2),(x_3,t_3)\big)\text{ where }\LL=(n_2)\otimes(n_3);\\
\HH\{n_3\}&\bydef\CD\big(\HH\otimes\LL,(x_3,t_3)\big)\text{ where }\LL=(n_3).
\end{align*}
For nonnegative integers $n_1,n_2,n_3$, we also define the graded algebras $\RR\{n_1,n_2,n_3\}'$, $\CC\{n_2,n_3\}'$ 
and $\HH\{n_3\}'$ as above, but with the convention $x_j=1$ if $n_j=0$. 
They fall into item 2 of Theorem \ref{th:alternative_equ}. Note that in our notation the letters $\RR$, $\CC$ and $\HH$
indicate the identity component. We can also have $\OO$ as the identity component. In summary:

\begin{corollary}\label{cor:alternative_equ}
Let $\cB$ be a finite-dimensional real graded-division algebra that is alternative but not associative. 
\begin{enumerate} 
\item[1.] If $\cB$ is graded-central with a nonsplit centroid, 
then $\cB$ is equivalent to one of the following:
\begin{enumerate}
\item[(a)] $\RR K\otimes\RR\{n_1,n_2,n_3\}$, $\RR K\otimes\CC\{n_2,n_3\}$, or $\RR K\otimes\HH\{n_3\}$,
where $K$ is a finite abelian group and exactly one of the integers $n_j$ is negative;
\item[(b)] $\RR K\otimes(-n)\otimes\RR\{n_1,n_2,n_3\}$, $\RR K\otimes(-n)\otimes\CC\{n_2,n_3\}$, 
$\RR K\otimes(-n)\otimes\HH\{n_3\}$, or $\RR K\otimes(-n)\otimes\OO$, where $K$ is a finite abelian group, 
$n>0$, and $n_j\ge 0$.
\end{enumerate}
\item[2.] If $\cB$ is graded-central with a split centroid, 
then $\cB$ is equivalent to one of the following:
\begin{enumerate}
\item[(a)] $\RR K\otimes\RR\{n_1,n_2,n_3\}$, $\RR K\otimes\CC\{n_2,n_3\}$, $\RR K\otimes\HH\{n_3\}$, or $\RR K\otimes\OO$,
where $K$ is a finite abelian group and $n_j\ge 0$;
\item[(b)] $\RR K\otimes\RR\{n_1,n_2,n_3\}'$, $\RR K\otimes\CC\{n_2,n_3\}'$, or $\RR K\otimes\HH\{n_3\}'$,
where $K$ is a finite abelian group and $n_j\ge 0$ with at least one being $0$.
\end{enumerate}
\item[3.] If $\cB$ is not graded-central, then $\cB$ is equivalent to $\CC K\otimes\RR\{n_1,n_2,n_3\}$,
where $K$ is a finite abelian group and $n_j\ge 0$.
\end{enumerate}
Two algebras in different items are not equivalent to one another. Two algebras in the same item are equivalent 
if and only if they have isomorphic abelian groups $K$ and, if applicable, the same value of $n$ and the same $n_j$
up to permutation.\qed
\end{corollary}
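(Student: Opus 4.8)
The plan is to unwind Theorem~\ref{th:alternative_equ} into the explicit families listed here, feeding into it the iterated Cayley--Dickson models~\eqref{eq:LCTmu_CD} (together with their complex analog) and the classification of the triples $(T,H,\chi_0)$ by tuples of nontrivial prime powers carrying marks and signs, which was obtained in the paragraphs preceding the statement.

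First I would pass to $2$-groups: writing $T=T'\times T''$ with $T'$ a finite $2$-group and $T''$ of odd order, we have $H=H'\times T''$ and $\bT=T'/H'$, and $\RR T''$ (an untwisted group algebra, as $2\nmid|T''|$) splits off as a tensor factor; likewise, using the decomposition of $\RR H'$ and Example~\ref{ex:1}, every split basis direction of $(T',H',\chi_0)$ contributes a group-algebra tensor factor, and all of these are collected into the single factor $\RR K$ (or $\CC K$ in item~3). By Lemma~\ref{lm:6} and the basis-reduction procedure for triples $(T,H,\chi_0)$ described above, I would then fix a basis of $T'$ in which $r$ elements are marked and map to a basis of $\bT$, and in which the tuple of signed prime powers encoding $(T',H',\chi_0)$ has at most one negative entry, a marked power of $2$ being allowed to be negative only if it is $\ge4$. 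Here $r\in\{0,1,2,3\}$ governs the identity component of the octonion core: for $r=0$ the core is trivially graded and $\cB\cong\OO\otimes\LL$ up to the split factor; for $r\ge1$ one invokes~\eqref{eq:LCTmu_CD}, noting that, for the chosen basis, every case there collapses to an $r$-fold Cayley--Dickson double of $\LL$, $\CC\otimes\LL$ or $\HH\otimes\LL$ (according as $r=3,2,1$), because a marked generator of order $2$ turns the corresponding $\cK$- or $\cQ$-factor of~\eqref{eq:LCTmu_CD} into one more Cayley--Dickson doubling with parameter $\mp1$.

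Next I would substitute the chosen basis into these models. The $r$ marked generators $t_j$, of orders $2^{|n_j|+1}$, give via Lemma~\ref{lm:5} Cayley--Dickson generators $u$ with $u^2=\pm x_j$, where $x_j$ generates the $j$-th tensor factor $(n_j)$ of $\LL$ in the $t_j$-direction; this sign is forced to be $+$ unless that slot carries the unique negative entry of the tuple, in which case $n_j<0$. This identifies the octonion core with $\RR\{n_1,n_2,n_3\}$, $\CC\{n_2,n_3\}$ or $\HH\{n_3\}$ according to $r$, the convention $x_j=\mp1$ when $n_j=0$ recording whether $\mu$ is trivial or not at $\bt_j$. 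Sorting by the position of the unique negative entry now produces all the cases. If $\LL\not\cong\RR H$ (i.e.\ $\chi|_{H_{[2]}}$ nontrivial, by Lemma~\ref{lm:1}) the centroid is nonsplit and that entry sits either on a marked slot of order $\ge4$, giving one negative $n_j$ and item~1(a), or on an unmarked slot, whence all $n_j\ge0$ and the slot survives as the explicit factor $(-n)$, giving item~1(b) (for $r=0$ this is the only possibility, producing $\RR K\otimes(-n)\otimes\OO$). If $\LL\cong\RR H$ there is no negative entry, all $n_j\ge0$, and the dichotomy $\mathcal{DA}(T,H)$ versus $\mathcal{DA}(T,H)'$ in item~2 of Theorem~\ref{th:alternative_equ} matches the unprimed/primed forms, the primed form requiring $\mu|_{\bT_0}\ne1$ and hence a marked generator of order $2$, i.e.\ some $n_j=0$. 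If $\LL_e\cong\CC$ then necessarily $r=3$, there is no $\mu$ and no sign, and pulling the $\CC$ and the remaining split part to the front yields $\CC K\otimes\RR\{n_1,n_2,n_3\}$ with all $n_j\ge0$. Finally, the non-redundancy assertions reduce to those already in Theorem~\ref{th:alternative_equ} --- which separates items~1--3 by the isomorphism type of the centroid and, within each, identifies the equivalence class with the orbit of $\chi_0$, the pair $(T,H)$, or the value $\mu|_{\bT_0}$ --- combined with the uniqueness up to permutation of the tuple representative of $(T,H,\chi_0)$: the identity component recovers $r$ and the real-versus-complex distinction, the multiset $\{n_j\}$ and the integer $n$ recover the orders and signs of the marked generators and of the distinguished unmarked one (by inspecting the relevant filtration quotients and the restrictions of $\chi_0$, as explained above), and $K$ is what remains.

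The step I expect to be the main obstacle is the bookkeeping of the third paragraph: one has to track precisely how $r$, the kernel $\bT_0=\ker\tau$ of~\eqref{eq:def_tau}, the single negative sign, and the conventions on the $x_j$ (namely $x_j=\mp1$ when $n_j=0$, and the freedom in the signs of the Cayley--Dickson parameters noted after~\eqref{eq:LCTmu_CD}) interact, so that each abstract algebra of Theorem~\ref{th:alternative_equ} is matched with exactly one explicit family; and one must verify that the basis adjustments used are realized by automorphisms of $T$ fixing $H$ setwise and preserving $\chi|_{H_{[2]}}$ --- which is why the sign reduction must avoid the forbidden move and use $t_j\mapsto t_it_j$ instead --- so that the equivalence class is genuinely unchanged throughout.
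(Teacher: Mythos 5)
Your proposal is correct and follows essentially the same route as the paper: the corollary is deduced by combining Theorem~\ref{th:alternative_equ} with the models~\eqref{eq:LCTmu_CD}, the splitting $T=T'\times T''$, Lemma~\ref{lm:6}, and the classification of triples $(T,H,\chi_0)$ by marked, signed tuples of prime powers with the sign reduced to a single negative entry, exactly as in the paragraphs preceding the statement. Your bookkeeping of how the position of the negative entry (marked slot of order $\ge 4$ versus unmarked slot versus absent) and the values $n_j=0$ sort the cases into items 1(a), 1(b), 2(a), 2(b), 3 matches the paper's intended argument.
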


\section{Jordan algebras of degree $2$}\label{s:Jordan} 

As another application of Theorem~\ref{th:loop1} and Corollary~\ref{cor:loop}, we will classify the finite-dimensional 
real $G$-graded unital Jordan algebras that are graded-simple and have degree $2$ over the center (which is identified with the centroid).

The concept of \emph{degree} is introduced for any finite-dimensional strictly power-associative unital algebra over a field  
using the so-called \emph{generic minimal polynomial} (see e.g. \cite[p.~223]{J}), and this can be extended to algebras over 
direct products of fields: indeed, such an algebra is the direct product of algebras over fields, and we take the maximum degree of the factors. 

\subsection{Jordan algebras of symmetric bilinear forms}

Let $V$ be a vector space over a field $\FF$ ($\chr{\FF}\ne 2$) and let $b:V\times V\to\FF$ be a symmetric bilinear form. 
Then $\cJ(V,b)\bydef\FF 1\oplus V$, with multiplication defined by $uv=b(u,v) 1$ for all $u,v\in V$, is a unital Jordan algebra. 
It is known \cite[p.~207]{J} that any central simple Jordan algebra of degree $2$ over $\FF$ 
is isomorphic to $\cJ(V,b)$ where $\dim V\ge 2$ and $b$ is nondegenerate. Moreover, the pair $(V,b)$ is determined up to isometry. 

As observed in \cite{BS}, all $G$-gradings on these Jordan algebras are obtained from 
$G$-gradings on $V$ satisfying $b(V_{g_1},V_{g_2})=0$ for all $g_1,g_2\in G$ with $g_1g_2\ne e$. 

More generally, if $\LL$ is a graded-field, $\cV$ is a graded $\LL$-module (hence free), and $b:\cV\times\cV\to\LL$ is a symmetric $\LL$-bilinear form satisfying 
$b(\cV_{g_1},\cV_{g_2})\subset\LL_{g_1g_2}$ for all $g_1,g_2\in G$, then $\cJ(\cV,b)\bydef\LL 1\oplus\cV$, with multiplication defined as above and 
$\cJ(\cV,b)_g\bydef\LL_g 1\oplus\cV_g$, is a graded Jordan algebra over $\LL$. 

\subsection{Graded-simple real Jordan algebras of degree 2}
 
Let $\cB$ be a finite-dimensional graded-simple real Jordan algebra, let $\LL$ be the centroid of $\cB$, and let $H$ be the support of $\LL$.
Then $H$ is finite, $\LL_e$ is either $\RR$ or $\CC$, and $\cB\cong L_\pi(\cA)$ or $\cB\cong\lpc(\cA)$ where $\cA$ is a finite-dimensional central simple 
Jordan algebra over $\RR$ or $\CC$, respectively, $\pi$ is the natural homomorphism $G\to\bG\bydef G/H$, 
and $\chi$ is a character $G\to U\subset\CC^\times$, as before. 
Since $\cA$ is unital (see e.g. \cite[p.~201]{J}), so is $\cB$, and we can identify $\LL$ with the center of $\cB$. Disregarding the grading, 
$\LL$ is isomorphic to the direct product of copies of $\RR$ and $\CC$ (see Example \ref{ex:1}). 

Now assume that $\cB$ has degree $2$ over $\LL$. If $\cB\cong L_\pi(\cA)$ then $\LL$ is the group algebra of $H$ and $\cA$ is the quotient of $\cB$ induced by 
the augmentation map of $\LL$, hence $\cA$ also has degree $2$. (It cannot be $1$, because in this case $\cA$ would be the ground field and hence $\cB=\LL$.) 
The same conclusion is obtained if $\cB\cong\lpc(\cA)$ by extending scalars from $\RR$ to $\CC$. Therefore, $\cA\cong\cJ(V,b)$ as above, with a $\bG$-grading 
coming from a $\bG$-grading on $V$ satisfying $b(V_{\bg_1},V_{\bg_2})=0$ for all $\bg_1,\bg_2\in G$ with $\bg_1\bg_2\ne \bar{e}$. 

The pair $(V,b)$ is determined up to graded isometry (see \cite[Theorem 5.42]{EKmon}). Over $\CC$, such pairs are parametrized by certain functions 
$\kappa:\bG\to\ZZ_{\ge 0}$. Indeed, the condition on $b$ implies that we have an orthogonal decomposition as follows:
\[
V=V_{\bg_1}\oplus\cdots\oplus V_{\bg_m}\oplus(V_{\bg'_{m+1}}\oplus V_{\bg''_{m+1}})\oplus\cdots\oplus (V_{\bg'_{l}}\oplus V_{\bg''_{l}}), 
\]
where all elements of $\bG$ are distinct and, for $1\le j\le m$, we have $\bg_j^2=\bar{e}$ and the restriction of $b$ to the subspace $V_{\bg_j}$ is nondegenerate, 
while for $m<j\le l$, we have $\bg'_j\bg''_j=\bar{e}$ and the subspaces $V_{\bg'_{j}}$ and $V_{\bg''_{j}}$ are totally isotropic and in duality through $b$. 
Therefore, we can choose bases in these subspaces so that $b$ is represented by the following matrix:
\[
I_{k_1}\oplus\cdots\oplus I_{k_m}\oplus
\begin{bmatrix}
0&I_{k_{m+1}}\\
I_{k_{m+1}}&0
\end{bmatrix}
\oplus\cdots\oplus
\begin{bmatrix}
0&I_{k_l}\\
I_{k_l}&0
\end{bmatrix}
\]
where $k_j=\dim V_{\bg_j}$ for $1\le j\le m$ and $k_j=\dim V_{\bg'_j}=\dim V_{\bg''_j}$ for $m<j\le l$ (see \cite[p.~199]{EKmon}).
We define 
\[
\kappa(\bg)\bydef\dim V_\bg\:\text{ for all }\bg\in\bG,
\]
so the support of $\kappa$ is $\{\bg_1,\ldots,\bg_m,\bg'_{m+1},\bg''_{m+1},\ldots,\bg'_l,\bg''_l\}$ and $\kappa$ is \emph{balanced} in the sense that 
$\kappa(\bg^{-1})=\kappa(\bg)$ for all $\bg\in\bG$. Conversely, every balanced $\kappa$ with finite support corresponds to $(V,b)$ as above.
We will denote the corresponding $\bG$-graded Jordan algebra by $\cJ_\CC(\kappa)$.

Over $\RR$, we have a similar situation, but $b$ will be represented by a matrix of the form
\[
I_{p_1,q_1}\oplus\cdots\oplus I_{p_m,q_m}\oplus
\begin{bmatrix}
0&I_{k_{m+1}}\\
I_{k_{m+1}}&0
\end{bmatrix}
\oplus\cdots\oplus
\begin{bmatrix}
0&I_{k_l}\\
I_{k_l}&0
\end{bmatrix}
\]
where $I_{p,q}=\matr{I_p&0\\0&-I_q}$, so $p_j-q_j$ is the signature of the restriction of $b$ to $V_{\bg_j}$, $1\le j\le m$.
Thus, we have one more invariant, namely, the function $\sigma:\bG_{[2]}\to\ZZ$ defined by 
\[
\sigma(\bg)\bydef\mathrm{signature}(b|_{V_\bg})\:\text{ for all }\bg\in\bG_{[2]},
\]
which satisfies $|\sigma(\bg)|\le\kappa(\bg)$ and $\sigma(\bg)\equiv\kappa(\bg)\pmod{2}$ for all $\bg\in\bG_{[2]}$.
For a given $\kappa$, we will refer to functions $\sigma$ satisfying these conditions as \emph{signature functions}.
Every such pair $(\kappa,\sigma)$ determines $(V,b)$ as above.
We will denote the corresponding $\bG$-graded Jordan algebra by $\cJ(\kappa,\sigma)$.

Similarly to Subsection \ref{ss:gs_alt}, we define a homomorphism 
\begin{equation}\label{eq:def_tau_again} 
\tau:\bG_{[2]}\to H/H^{[2]},\: gH\mapsto g^2 H^{[2]}.
\end{equation}
Then the cocycle twists of $\cJ(\kappa,\sigma)$ are calculated similarly to Lemma~\ref{lm:3}:

\begin{lemma}\label{lm:7}
For any character $\lambda:H\to\RR^\times$, we have $\cJ(\kappa,\sigma)^\lambda\cong\cJ(\kappa,\sigma')$ where $\sigma'(\bg)=\sigma(\bg)\lambda_0(\tau(\bg))$ 
for all $\bg\in\bG_{[2]}$, and $\lambda_0(hH^{[2]})\bydef\mathrm{sign}\,\lambda(h)$ for all $h\in H$.\qed 
\end{lemma}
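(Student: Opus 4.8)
The plan is to follow closely the pattern already established in the proof of Lemma~\ref{lm:3}, since the Jordan situation is structurally identical: the cocycle twist of $\cJ(\kappa,\sigma)$ by $\lambda$ must preserve the graded module $\cV$ underlying the construction but rescales the bilinear form, and the rescaling factor on each homogeneous piece is precisely $\lambda$ evaluated on the relevant element of $H$. First I would fix a normalized section $\xi:\bG\to G$ with $\xi(\bar e)=e$, giving the cocycle $\sigma:\bG\times\bG\to H$ (I will write $\sigma_{\mathrm{coc}}$ in the proof to avoid clash with the signature function $\sigma$). The twisted algebra $\cJ(\kappa,\sigma)^\lambda$ is, as a $\bG$-graded space, equal to $\LL 1\oplus\cV$ with $\LL=\RR$ trivially graded, and the new multiplication on $\cV$ is $u\ast v=\lambda(\sigma_{\mathrm{coc}}(\bg_1,\bg_2))\,b(u,v)\,1$ for $u\in\cV_{\bg_1}$, $v\in\cV_{\bg_2}$. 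So $\cJ(\kappa,\sigma)^\lambda=\cJ(\cV,b^\lambda)$ where $b^\lambda(u,v)=\lambda(\sigma_{\mathrm{coc}}(\bg_1,\bg_2))\,b(u,v)$, which is again a symmetric $\RR$-bilinear form on $\cV$ satisfying the support condition (because $b$ does and $\lambda\circ\sigma_{\mathrm{coc}}$ only rescales). Hence $\cJ(\kappa,\sigma)^\lambda\cong\cJ(\kappa',\sigma')$ for the $(\kappa',\sigma')$ attached to $(\cV,b^\lambda)$.

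Next I would check that $\kappa'=\kappa$: twisting changes neither the grading on $\cV$ nor which homogeneous components pair nontrivially (that depends only on the group-theoretic condition $\bg_1\bg_2=\bar e$, untouched by $\lambda$), so $\dim\cV_\bg$ and the duality pattern are unchanged, and therefore $\kappa'=\kappa$. The only thing that can change is the signature of the restriction of the form to $\cV_\bg$ for $\bg\in\bG_{[2]}$, since in that case the twisting factor is $\lambda(\sigma_{\mathrm{coc}}(\bg,\bg))$, a nonzero real scalar: the signature of $b^\lambda|_{\cV_\bg}$ equals $\mathrm{sign}\big(\lambda(\sigma_{\mathrm{coc}}(\bg,\bg))\big)$ times the signature of $b|_{\cV_\bg}$. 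So $\sigma'(\bg)=\mathrm{sign}\,\lambda(\sigma_{\mathrm{coc}}(\bg,\bg))\cdot\sigma(\bg)$.

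It then remains to identify $\mathrm{sign}\,\lambda(\sigma_{\mathrm{coc}}(\bg,\bg))$ with $\lambda_0(\tau(\bg))$, which is exactly the computation done in Lemma~\ref{lm:3}: since $\xi(\bar e)=e$ and $\bg^2=\bar e$, we have $\sigma_{\mathrm{coc}}(\bg,\bg)=\xi(\bg)\xi(\bg)\xi(\bg^2)^{-1}=\xi(\bg)^2$, and by the definition of $\tau$ in \eqref{eq:def_tau_again}, $\xi(\bg)^2 H^{[2]}=\tau(\bg)$, so $\mathrm{sign}\,\lambda(\xi(\bg)^2)=\lambda_0(\xi(\bg)^2 H^{[2]})=\lambda_0(\tau(\bg))$. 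This gives $\sigma'(\bg)=\sigma(\bg)\lambda_0(\tau(\bg))$, as claimed. One should also note, as in the cited lemma, that the isomorphism class of $\cJ(\kappa,\sigma)^\lambda$ does not depend on the choice of section $\xi$ (this is already known in general for cocycle twists), so the formula is well defined. I do not anticipate a genuine obstacle here; the only point needing a little care is keeping the roles of the cocycle $\sigma_{\mathrm{coc}}$ and the signature function $\sigma$ notationally distinct, and making explicit that a real scalar multiple of a symmetric form on a real space changes the signature only by its sign — a triviality, but worth stating since it is the crux of why $\tau$ and $\lambda_0$ enter.
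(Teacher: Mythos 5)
Your proof is correct and is exactly the argument the paper intends: the paper omits the proof, saying the twists are ``calculated similarly to Lemma~\ref{lm:3}'', and your computation — normalized section, twisted product realized as $\cJ(V,b^\lambda)$ with $b^\lambda$ a rescaling of $b$ on each component, $\kappa$ unchanged, and the signature on $V_{\bg}$ for $\bg\in\bG_{[2]}$ multiplied by $\mathrm{sign}\,\lambda(\xi(\bg)^2)=\lambda_0(\tau(\bg))$ — is precisely that adaptation. No gaps; the notational care with the cocycle versus the signature function and the appeal to the graded-isometry classification of $(V,b)$ are handled appropriately.
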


Therefore, $\cJ(\kappa,\sigma')$ is graded-isomorphic to a twist of $\cJ(\kappa,\sigma)$ if and only if $\sigma'=\sigma\tilde{\mu}$
for some character $\tilde{\mu}:\bG_{[2]}\to\{\pm 1\}$ that restricts to the trivial character on $\ker\tau$ (see the proof of Lemma~\ref{lm:4}).
To summarize:

\begin{theorem}\label{th:J_deg2_iso}
Let $\cB$ be a $G$-graded finite-dimensional unital Jordan algebra over $\RR$ that is graded-simple and has degree $2$ 
over its center $\LL$. Let $H$ be the support of $\LL$, $\bG=G/H$, and $\pi$ be the natural homomorphism $G\to\bG$. 
If $\LL_e=\RR$, pick a character $\chi:G\to U\subset\CC^\times$ such that $\LL\cong\lpc(\RR)$.
\begin{enumerate} 
\item[1.] If $\LL_e=\RR$ then $\cB$ is graded-isomorphic to $\lpc(\cJ(\kappa,\sigma))$ for a unique balanced function $\kappa:\bG\to\ZZ_{\ge 0}$ with finite support 
and a signature function $\sigma:\bG_{[2]}\to\ZZ$, which is determined up to multiplication by the characters $\bG_{[2]}\to\{\pm 1\}$ with trivial restriction to  
$\ker\tau$, where $\tau:\bG_{[2]}\to H/H^{[2]}$ is given by Equation~\eqref{eq:def_tau_again}.
\item[2.] If $\LL_e\cong\CC$ then $\cB$ is graded-isomorphic to $L_\pi(\cJ_\CC(\kappa))$ 
for a unique balanced function $\kappa:\bG\to\ZZ_{\ge 0}$ with finite support.\qed 
\end{enumerate}
\end{theorem}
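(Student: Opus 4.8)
The plan is to reduce everything to the loop machinery of Section~\ref{s:LC} together with the known classification of graded symmetric bilinear forms, essentially repeating for Jordan algebras of degree~$2$ the pattern used for octonions in Subsection~\ref{ss:gs_alt}. First I would produce a loop model of $\cB$: if $\LL_e=\RR$, then $\cB$ is graded-central-simple over $\RR$ and Theorem~\ref{th:loop1} gives $\cB\cong\lpc(\cA)$ for a $\bG$-graded central simple Jordan algebra $\cA$ over $\RR$, with $\pi$ and $\chi$ as in the statement; if $\LL_e\cong\CC$, then $\LL$ is a $\CC$-algebra, $\cB$ is graded-central-simple over $\CC$ with split centroid, and the Correspondence Theorem of \cite{ABFP} gives $\cB\cong L_\pi(\cA)$ for a $\bG$-graded central simple Jordan algebra $\cA$ over $\CC$. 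As explained just before Lemma~\ref{lm:7} (using that $\cA$ is a quotient of $\cB$ via the augmentation of $\LL$, together with an extension of scalars to $\CC$ in the real nonsplit case), $\cA$ again has degree $2$: its degree is at most~$2$, and it cannot be~$1$ because then $\cA$ would be the ground field and $\cB=\LL$, which has degree~$1$ over its center. Hence, by the classical structure theory, $\cA\cong\cJ(V,b)$ with $\dim V\ge 2$ and $b$ nondegenerate, the $\bG$-grading being inherited from a $\bG$-grading on $V$ with $b(V_{\bg_1},V_{\bg_2})=0$ whenever $\bg_1\bg_2\ne\bar e$.

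Next I would invoke the classification of such graded pairs $(V,b)$ up to graded isometry, \cite[Theorem 5.42]{EKmon}, in the explicit form recalled above: over $\CC$ the pair corresponds to a balanced function $\kappa:\bG\to\ZZ_{\ge 0}$ of finite support, so $\cA\cong\cJ_\CC(\kappa)$; over $\RR$ it corresponds to a pair $(\kappa,\sigma)$ with $\kappa$ balanced of finite support and $\sigma:\bG_{[2]}\to\ZZ$ a signature function, so $\cA\cong\cJ(\kappa,\sigma)$. This gives the existence of the asserted models, and it remains to measure how much of the data is determined by $\cB$.

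For this I would apply Corollary~\ref{cor:loop} (and, when $\LL_e\cong\CC$, the analogous assertion of the Correspondence Theorem): $\lpc(\cA)\cong\lpc(\cA')$ as $G$-graded algebras precisely when $\cA'$ is isomorphic to a cocycle twist of $\cA$. A twist leaves the underlying $\bG$-graded vector space unchanged, hence does not change the dimensions $\dim\cA_{\bg}$; since $\kappa(\bg)$ is recovered from these dimensions, $\kappa$ is an invariant of $\cB$, which gives its uniqueness in both cases. When $\LL_e\cong\CC$, every character $H\to\CC^\times$ extends to $G$ since $\CC^\times$ is divisible, so every twist of $\cJ_\CC(\kappa)$ is graded-isomorphic to $\cJ_\CC(\kappa)$ itself, completing that case. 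When $\LL_e=\RR$, Lemma~\ref{lm:7} identifies the twists of $\cJ(\kappa,\sigma)$ with the algebras $\cJ(\kappa,\sigma')$ where $\sigma'(\bg)=\sigma(\bg)\lambda_0(\tau(\bg))$, with $\lambda$ running over the real-valued characters of $H$; the same exact-sequence argument as in the proof of Lemma~\ref{lm:4} shows that the resulting maps $\lambda_0\circ\tau$ are exactly the characters $\bG_{[2]}\to\{\pm 1\}$ that are trivial on $\ker\tau$. Since distinct pairs $(\kappa,\sigma)$ yield non-graded-isometric, hence non-graded-isomorphic, algebras $\cJ(\kappa,\sigma)$, this is exactly the ambiguity in $\sigma$ claimed in the theorem.

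Most of this is assembly of results already in place, so I expect the only delicate points to be the transfer of the degree-$2$ hypothesis from $\cB$ to the fibre $\cA$ — that is, the behaviour of the generic minimal polynomial under the augmentation quotient and under the extension $\RR\to\CC$ — and the verification that $\kappa$ is genuinely read off from the homogeneous-component dimensions, so that passing to a twist cannot change it. Everything else is routine bookkeeping.
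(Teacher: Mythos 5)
Your proposal is correct and follows essentially the same route as the paper: a loop model via Theorem~\ref{th:loop1} (resp.\ the Correspondence Theorem of \cite{ABFP}), transfer of the degree-$2$ condition to $\cA$ through the augmentation quotient or scalar extension, the classification of graded pairs $(V,b)$ from \cite[Theorem 5.42]{EKmon}, and the twist computation of Lemma~\ref{lm:7} combined with the exact-sequence argument of Lemma~\ref{lm:4}. Your explicit observations that $\kappa$ is read off from the homogeneous-component dimensions (hence twist-invariant) and that twists are trivial over $\CC$ by divisibility of $\CC^\times$ are correct elaborations of points the paper leaves implicit.
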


The graded algebras in Theorem \ref{th:J_deg2_iso} can be obtained as Jordan algebras of symmetric bilinear forms over $\LL$. Indeed, consider the $G$-graded 
$\LL$-module $\cV=\lpc(V)$ and define a symmetric $\LL$-bilinear form $B:\cV\times\cV\to\LL$ as follows:
\[
B(v_1\ot u_{g_1},v_2\ot u_{g_2})\bydef b(v_1,v_2)\ot u_{g_1}u_{g_2}\text{ for all }v_1\in V_{\pi(g_1)},\,v_2\in V_{\pi(g_2)},\,g_1,g_2\in G.
\]  
Since $b(v_1,v_2)=0$ unless $g_1g_2\in H$, the right-hand side is indeed in $\LL$, and $B$ satisfies $B(\cV_{g_1},\cV_{g_2})\subset\LL_{g_1g_2}$.
Hence, we can define the $G$-graded Jordan algebra $\cJ(\cV,B)$, and the canonical isomorphism 
$(\RR 1\oplus V)\otimes\RR^\gamma G\cong \RR^\gamma G\oplus (V\otimes \RR^\gamma G)$ restricts to an isomorphism 
$\lpc(\cJ(V,b))\cong \cJ(\cV,B)$. 

Now choose $g_j\in G$ such that $\pi(g_j)=\bg_j$ for $1\le j\le m$ and $g_j'\in G$ such that $\pi(g_j')=\bg_j$ for $m<j\le l$. 
(If $\bg_j=\bar{e}$, we take $g_j=e$.)
Then $\cV$ can be identified with $V\otimes\LL$ as a graded $\LL$-module through the isomorphism sending, for each $1\le j\le m$, 
$v\ot u_h\mapsto v\ot u_{g_j}u_h$ for all $v\in V_{\bg_j}$, $h\in H$, and for each $m< j\le l$, $v\ot u_h\mapsto v\ot u_{g'_j}u_h$ for all $v\in V_{\bg'_j}$, $h\in H$,
and $v\ot u_h\mapsto v\ot u_{(g'_j)^{-1}}u_h$ for all $v\in V_{\bg''_j}$, $h\in H$. Then any choice of bases in $V_{\bg_j}$, $V_{\bg_j'}$ and $V_{\bg''_j}$
gives us an $\LL$-basis of $\cV$. We can choose the bases so that $B$ is represented by the matrix
\[
I_{p_1,q_1}\otimes\delta_1 u_{h_1}\oplus\cdots\oplus I_{p_m,q_m}\otimes\delta_m u_{h_m}\oplus
\begin{bmatrix}
0&I_{k_{m+1}}\\
I_{k_{m+1}}&0
\end{bmatrix}\otimes 1
\oplus\cdots\oplus
\begin{bmatrix}
0&I_{k_l}\\
I_{k_l}&0
\end{bmatrix}\otimes 1
\]
where $\delta_j=\gamma(\bg_j,\bg_j)\in\{\pm 1\}$ and $h_j=g_j^2$. 

If we consider these graded algebras up to equivalence rather than isomorphism, some further simplifications can be made. 
First of all, it is convenient to change the grading group as follows. 
Let $\wt{G}$ be the abelian group generated by $H$ and the new symbols 
$\tilde{g}_1,\ldots,\tilde{g}_m,\tilde{g}'_{m+1},\tilde{g}''_{m+1},\ldots,\tilde{g}'_{l},\tilde{g}''_{l}$ subject to the following relations: 
for $1\le j\le m$, we impose $\tilde{g}_j^2=h_j$ if $\bg_j\ne\bar{e}$ and $\tilde{g}_j=e$ otherwise, while for $m<j\le l$, we impose $\tilde{g}'_j\tilde{g}''_j=e$. 
Then $H$ is a subgroup of $\wt{G}$ and we have $\wt{G}/H\cong\ZZ_2^m\times\ZZ^{l-m}$ if $\bg_j\ne\bar{e}$ for all $j$ and $\wt{G}/H\cong\ZZ_2^{m-1}\times\ZZ^{l-m}$
otherwise. We define a $\wt{G}$-grading on $\cV$ by declaring, for each $1\le j\le m$, 
$\cV_{g_jh}$ to be the component of degree $\tilde{g}_jh$ for all $h\in H$, and similarly for $m<j\le l$. The $\LL$-bilinear form $B$ is compatible 
with this new grading, so we obtain a $\wt{G}$-grading on $\cJ(\cV,B)$, which is equivalent to the original $G$-grading.

\begin{remark}
We can say more: the $G$-grading on $\cJ(\cV,B)$ is induced from the $\wt{G}$-grading by the homomorphism $\alpha:\wt{G}\to G$ extending the identity on $H$
and sending $\tilde{g}_j\mapsto g_j$, $\tilde{g}'_j\mapsto g'_j$, and $\tilde{g}''_j\mapsto (g'_j)^{-1}$. 
It follows that $\wt{G}$ is the \emph{universal abelian group} (see e.g. \cite[\S 1.2]{EKmon}) of the grading on $\cJ(\cV,B)$.
\end{remark}

Now replace $G$ by $\wt{G}$ and drop the tilde. If $\LL$ is nonsplit, there exists $h\in H$ such that $u_h^2=-1$, and we can use it to adjust our  
$\LL$-basis of $\cV$ to make $\delta_j=1$ or $\delta_j=-1$ arbitrarily, but note that this also changes $g_j$, so we cannot apply this if $\bg_j=\bar{e}$. 
Therefore, we can make the matrix of $B$ look as follows:
\[
I_{p_1,q_1}\otimes u_{h_1}\oplus\cdots\oplus I_{p_m,q_m}\otimes u_{h_m}\oplus
\begin{bmatrix}
0&I_{k_{m+1}}\\
I_{k_{m+1}}&0
\end{bmatrix}\otimes 1
\oplus\cdots\oplus
\begin{bmatrix}
0&I_{k_l}\\
I_{k_l}&0
\end{bmatrix}\otimes 1
\]
where we may assume $p_j\ge q_j$ if $\bg_j\ne\bar{e}$.
If $\LL\cong\RR H$ then we may assume $u_h=h$, so $\delta_j=1$, but note that the isomorphism condition in 
Theorem~\ref{th:J_deg2_iso} allows us to interchange $p_j$ and $q_j$ arbitrarily if $\bg_j\notin\ker\tau$.  
Therefore, we can make the matrix of $B$ look as above, where we may assume $p_j\ge q_j$ if $\bg_j\notin\ker\tau$.

\subsection{Graded-division Jordan algebras of degree 2}

In a unital Jordan algebra $\cJ$, an inverse of an element $a$ is $b\in\cJ$ such that $ab=1$ and $a^2b=a$. The existence of such $b$ is equivalent to the 
invertibility of the operator $U_a\bydef 2L_a^2-L_{a^2}$. If this is the case, the element $b$ is unique and given by $b=U_a^{-1}(a)$ (see e.g. \cite[p.~52]{J}),
so the usual notation $a^{-1}$ is used. $\cJ$ is called a \emph{division algebra} if every nonzero element is invertible. 
It is clear that if $\cJ$ is $G$-graded and $a\in\cJ_g$ is invertible then $a^{-1}\in\cJ_{g^{-1}}$. 
As stated in the introduction, $\cJ$ is a \emph{graded-division algebra} if every nonzero homogeneous element is invertible. 
Note, however, that, for an invertible element $a$, the operator $L_a$ is not necessarily invertible, 
and the set of invertible elements need not be closed under multiplication.
 
The Jordan algebra of a symmetric bilinear form, $\cJ(V,b)$, is a division algebra if and only if $b(v,v)$ is not a square in $\FF$, for any $0\ne v\in V$ 
(see \cite[Exercise 2, p. 54]{J}). Clearly, in the case $\FF=\RR$, this is equivalent to $b$ being negative, i.e., $b(v,v)<0$, for any $0\ne v\in V$. 

\begin{proposition} 
Suppose we have a $G$-grading on the Jordan algebra $\cJ=\cJ(V,b)$ over any field $\FF$, $\chr{\FF}\ne 2$. 
Then $\cJ$  is a graded-division algebra if and only if the following conditions hold:
\begin{enumerate}
\item[(i)] all components $V_g$ are nonisotropic (i.e., the restriction of the quadratic form $b(v,v)$ to $V_g$ does not represent $0$);
\item[(ii)] for every $0\ne v\in V_e$, $b(v,v)$ is not a square in $\FF$. 
\end{enumerate}
Condition (i) implies that all elements of the support have order $\le 2$.
\end{proposition}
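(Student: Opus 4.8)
The plan is to reduce the statement to a single criterion for invertibility of an arbitrary element of $\cJ(V,b)$ and then apply it to each homogeneous component, handling the identity component separately from the rest. First I would record the shape of the grading: since $\FF 1$ is homogeneous of degree $e$ and the $G$-grading on $\cJ$ restricts to a $G$-grading on $V$ satisfying $b(V_{g_1},V_{g_2})=0$ whenever $g_1g_2\ne e$, we have $\cJ_e=\FF 1\oplus V_e$ and $\cJ_g=V_g$ for $g\ne e$. Hence a nonzero homogeneous element of degree $g\ne e$ is an element $0\ne v\in V_g$, while a nonzero homogeneous element of degree $e$ is of the form $\alpha 1+v$ with $\alpha\in\FF$, $v\in V_e$ and $(\alpha,v)\ne(0,0)$.

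The heart of the argument is the following standard fact: an element $x=\alpha 1+v$ (with $\alpha\in\FF$, $v\in V$) is invertible in $\cJ(V,b)$ if and only if $\alpha^2\ne b(v,v)$, in which case $x^{-1}=(\alpha^2-b(v,v))^{-1}(\alpha 1-v)$. I would derive this from the computation $x^2=(\alpha^2+b(v,v))1+2\alpha v$, which shows that $x$ satisfies $t^2-2\alpha t+(\alpha^2-b(v,v))=0$, its generic minimal polynomial when $v\ne 0$; equivalently, one checks directly that $U_x=2L_x^2-L_{x^2}$ acts on $\cJ(V,b)$ as the scalar $\alpha^2-b(v,v)$. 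The special case of arbitrary $\alpha$ is precisely the division-algebra criterion quoted above from \cite[Exercise~2, p.~54]{J}. This is the only genuinely computational step, and it is routine; I do not expect any serious obstacle in the proof, the one point requiring care being that the identity component contains $\FF 1$ and so must be treated differently from the $V_g$ with $g\ne e$, where only the form $b(v,v)$ matters.

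Applying the criterion component by component then gives the ``if and only if''. For $g\ne e$, specializing to $\alpha=0$ shows that a nonzero $v\in V_g$ is invertible iff $b(v,v)\ne 0$, so all nonzero homogeneous elements of degree $g$ are invertible iff $V_g$ is nonisotropic --- this is (i) for $g\ne e$. For $g=e$: if $v=0$ then $\alpha 1$ with $\alpha\ne 0$ is always invertible, while for $0\ne v\in V_e$ the elements $\alpha 1+v$ are all invertible (as $\alpha$ ranges over $\FF$) iff $b(v,v)\ne\alpha^2$ for every $\alpha$, i.e. iff $b(v,v)$ is not a square in $\FF$ --- this is (ii). Since $0$ is a square, (ii) already forces $b(v,v)\ne 0$ for all $0\ne v\in V_e$, so it subsumes the $e$-part of (i). Combining, $\cJ$ is a graded-division algebra if and only if (i) and (ii) hold.

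It remains to prove the final claim. If $g\in\supp\Gamma$ and $g\ne e$, then $V_g\ne 0$; since $b(V_g,V_g)=0$ whenever $g^2\ne e$, condition (i) forces $g^2=e$ (otherwise the form $b(v,v)$ would vanish identically on the nonzero space $V_g$, hence represent $0$). Together with $o(e)=1$, this shows every element of the support has order $\le 2$.
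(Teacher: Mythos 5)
Your overall structure --- splitting into the identity component $\cJ_e=\FF 1\oplus V_e$ and the components $\cJ_g=V_g$ for $g\ne e$, and reading off (i), (ii) and the order-$\le 2$ claim from $b(V_{g_1},V_{g_2})=0$ for $g_1g_2\ne e$ --- is exactly the paper's, and your elementwise criterion ($x=\alpha 1+v$ is invertible iff $\alpha^2\ne b(v,v)$) is true and would even make the proof self-contained, whereas the paper handles degree $e$ by citing the exercise in \cite{J} for $\cJ_e\cong\cJ(V_e,b_e)$. The gap is in how you justify that criterion, and it hits precisely the half you need for the implication ``graded-division $\Rightarrow$ (i) and (ii)'', namely the \emph{non}-invertibility of $x$ when $\alpha^2=b(v,v)$. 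Your proposed shortcut, that $U_x=2L_x^2-L_{x^2}$ acts on $\cJ(V,b)$ as the scalar $\alpha^2-b(v,v)$, is false: in any unital Jordan algebra $U_x(1)=x^2$, and here $x^2=(\alpha^2+b(v,v))1+2\alpha v$, which already for $x=v$ with $b(v,v)\ne 0$ is $b(v,v)1$, not $-b(v,v)1$ (and is not even in $\FF 1$ when $\alpha v\ne 0$). The quadratic relation $x^2-2\alpha x+(\alpha^2-b(v,v))1=0$ by itself only produces the inverse formula when $\alpha^2\ne b(v,v)$; to conclude non-invertibility from the vanishing of the constant term you would need the generic-norm theorem, which requires finite-dimensional machinery not assumed in the proposition, and the exercise you cite from \cite{J} is a statement about all elements of the algebra simultaneously, so it does not yield non-invertibility of the particular element $\alpha 1+v$.

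The fix is short. If $\alpha^2=b(v,v)$ and $x=\alpha 1+v\ne 0$, then $v\ne 0$ (otherwise $\alpha^2=0$ forces $x=0$) and $x^2=2\alpha x$; if some $y$ satisfied $xy=1$ and $x^2y=x$, then $x=x^2y=2\alpha(xy)=2\alpha 1$, i.e.\ $v=\alpha 1\in V\cap\FF 1=0$, a contradiction. For $g\ne e$, where $\alpha=0$, this is exactly the paper's one-line argument: $v^2=0$, so $v^2y=0\ne v$ for every $y$. With this patch your argument is complete and is essentially the paper's proof, with the identity-component case done by hand instead of by citation.
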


\begin{proof}
Since $\cJ_e=\FF 1\oplus V_e=\cJ(V_e,b_e)$, where $b_e$ is the restriction of $b$ to $V_e$, the nonzero elements of $\cJ_e$ are invertible if and only if 
condition (ii) holds, which also implies condition (i) for $g=e$. Now suppose $0\ne v\in \cJ_g$ for some $g\ne e$. Then $v\in V_g$ and hence $v^2=b(v,v)1$. 
If $b(v,v)=0$ then $v$ is not invertible because $v^2b=0\ne v$ for any $b\in\cJ$. 
Conversely, if $b(v,v)\ne 0$ then $v^{-1}=\frac{1}{b(v,v)}v$. Therefore, condition (i) 
is equivalent to the invertibility of all nonzero elements of $\cJ_g$. If $g^2\ne e$ then $V_g$ is totally isotropic.
\end{proof}

Recalling Remark \ref{rem:div} and applying the above proposition to our $\cJ(\kappa,\sigma)$ and $\cJ_\CC(\kappa)$, with $\bG$ playing the role of $G$, 
we obtain the following:

\begin{corollary}\label{cor:J_deg2_div}
Let $\cB$ be as in Theorem \ref{th:J_deg2_iso}. Then $\cB$ is a graded-division algebra if an only if one of the following holds:
\begin{enumerate}
\item[1.] $\LL_e=\RR$, the support of $\kappa$ is contained in $\bG_{[2]}$, $|\sigma(\bg)|=\kappa(\bg)$ for all $\bg\in\bG_{[2]}$, 
and $\sigma(\bar{e})=-\kappa(\bar{e})$;
\item[2.] $\LL_e\cong\CC$, the support of $\kappa$ is contained in $\bG_{[2]}$, $\kappa(\bg)\in\{0,1\}$ for all $\bg\in\bG_{[2]}$, 
and $\kappa(\bar{e})=0$.\qed 
\end{enumerate}
\end{corollary}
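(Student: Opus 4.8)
The plan is to combine Remark~\ref{rem:div} with the preceding Proposition. By Theorem~\ref{th:J_deg2_iso}, when $\LL_e=\RR$ we have $\cB\cong\lpc(\cA)$ with $\cA=\cJ(\kappa,\sigma)$, and when $\LL_e\cong\CC$ we have $\cB\cong L_\pi(\cA)$ with $\cA=\cJ_\CC(\kappa)$; in either case $\cB$ is a loop algebra of the $\bG$-graded Jordan algebra $\cA$, so by Remark~\ref{rem:div} (and its evident analog for the split loop construction $L_\pi$) the algebra $\cB$ is a graded-division algebra with respect to $G$ if and only if $\cA$ is a graded-division algebra with respect to $\bG$. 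Since $\cA$ has the form $\cJ(V,b)$ with $V$ a $\bG$-graded space and $b(V_{\bg_1},V_{\bg_2})=0$ whenever $\bg_1\bg_2\ne\bar e$, I would then invoke the Proposition with $\bG$ in place of $G$ and with $\FF=\RR$ or $\FF=\CC$ accordingly, so that the only remaining task is to rewrite its conditions (i) and (ii) in terms of $\kappa$ and $\sigma$.

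For condition~(i), I would argue as follows. By the normal form recalled just before Theorem~\ref{th:J_deg2_iso}, if $\bg^2\ne\bar e$ then $V_\bg$ is totally isotropic, so nonisotropy forces $V_\bg=0$; hence (i) first requires $\supp\kappa\subseteq\bG_{[2]}$, which is precisely the last sentence of the Proposition. For $\bg\in\bG_{[2]}$ the restriction $b|_{V_\bg}$ is nondegenerate, and a nondegenerate real quadratic form is nonisotropic if and only if it is definite, i.e., $|\sigma(\bg)|=\kappa(\bg)$, whereas a nondegenerate complex quadratic form is nonisotropic if and only if the underlying space has dimension at most $1$, i.e., $\kappa(\bg)\in\{0,1\}$.

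For condition~(ii), which constrains only the identity component $V_{\bar e}$: over $\RR$, requiring $b(v,v)$ to be a nonsquare (that is, negative) for every nonzero $v\in V_{\bar e}$ means precisely that $b|_{V_{\bar e}}$ is negative definite, i.e., $\sigma(\bar e)=-\kappa(\bar e)$, which in particular subsumes the $\bg=\bar e$ instance of (i); over $\CC$, every scalar is a square, so (ii) can hold only when $V_{\bar e}=0$, i.e., $\kappa(\bar e)=0$. Putting these together gives items~1 and~2 respectively, which finishes the proof. I do not expect any real obstacle: the argument is pure translation, and the single point that needs a moment's care is that $b|_{V_\bg}$ is genuinely nondegenerate for $\bg\in\bG_{[2]}$, so that ``nonisotropic'' has its usual meaning --- but this is built into the normal form already established.
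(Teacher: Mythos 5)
Your proposal is correct and follows the paper's own route: the paper obtains the corollary exactly by invoking Remark~\ref{rem:div} to reduce to the $\bG$-graded algebras $\cJ(\kappa,\sigma)$ and $\cJ_\CC(\kappa)$ and then applying the Proposition with $\bG$ in place of $G$, translating nonisotropy into definiteness (i.e.\ $|\sigma(\bg)|=\kappa(\bg)$) or $\kappa(\bg)\le 1$ and condition (ii) into $\sigma(\bar e)=-\kappa(\bar e)$ or $\kappa(\bar e)=0$. Your added care about totally isotropic components for $\bg^2\ne\bar e$ and about the split analog of Remark~\ref{rem:div} matches what the paper leaves implicit.
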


Note that in item 2, the conditions imply that all nonzero homogeneous components of $\cB$ are $1$-dimensional, whereas in item 1, they can have arbitrary dimensions.
This is in contrast with alternative (in particular, associative) graded-division algebras, where all nonzero components must have the same dimension 
(which, if finite, must be $1$, $2$, $4$ or $8$ over $\RR$).

\section*{Acknowledgments}
The authors are grateful to Alberto Elduque and Efim Zelmanov for fruitful discussions.


\begin{thebibliography}{999}

\bibitem{ABFP} Allison, B.; Berman, S.; Faulkner, J.; Pianzola, A. {\it Realization of graded-simple algebras as loop algebras}, Forum Math. {\bf 20} (2008), 
395--432.


\bibitem{BKR_L} Bahturin, Y.; Kochetov, M.; Rodrigo-Escudero, A. \textit{Gradings on classical central simple real Lie algebras}, 
J. Algebra \textbf{506} (2018), 1--42.

\bibitem{BSZ01}  Bahturin, Y.; Sehgal, S.; Zaicev, M. {\it Group gradings on associative algebras}, J. Algebra {\bf 241} (2001),  667--698.

\bibitem{BS} Bahturin, Y.; Shestakov, I. {\it Gradings of simple Jordan algebras and their relation to the gradings of simple associative algebras}, 
Comm. Algebra \textbf{29} (2001), no. 9, 4095--4102, special issue dedicated to Alexei Ivanovich Kostrikin.

\bibitem{BSZ05} Bahturin, Y.; Shestakov, I.; Zaicev, M. {\it Gradings on simple Jordan and Lie algebras}, J. Algebra \textbf{283} (2005), no. 2, 849--868.

\bibitem{BZGDS}  Bahturin, Y.; Zaicev, M. \textit{Simple graded division algebras over the field of real numbers}, Linear Algebra and its Applications 
\textbf{490} (2016), 102--123.

\bibitem{BZGRDA}  Bahturin, Y.; Zaicev, M. \textit{Graded division algebras over the field of real numbers}, to appear in J. Algebra (preprint arXiv:1611.04000).



\bibitem{BN} Benkart, G.; Neher, E. {\it The centroid of extended affine and root graded Lie algebras},
J. Pure Appl. Algebra \textbf{205} (2006), 117--145.

\bibitem{CDM} Calder\'on Mart\'in, A.J.; Draper, C.; Mart\'in Gonzalez, C. {\it Gradings on the real forms of the Albert algebra, of $\mathfrak{g}_2$ and of  $\mathfrak{f}_4$}, J. Math. Phys. \textbf{51}(2010), no.~5, 053516, 21 pp.

\bibitem{DM}  Draper, C.; Mart\'in Gonz\'alez, C. {\it Gradings on the Albert algebra and on $\mathfrak{f}_4$}, Rev. Mat. Iberoam. \textbf{25} (2009), no.~3, 841--908.

\bibitem{E} Elduque, A. {\it Gradings on octonions}, J. Algebra \textbf{207} (1998), no. 1, 342--354.

\bibitem{EK1} Elduque, A.; Kochetov, M. {\it Gradings on the exceptional Lie algebras $F_4$ and $G_2$ revisited}, Rev. Mat. Iberoam. \textbf{28} (2012), no.~3, 
773--813.

\bibitem{EKmon} Elduque, A.; Kochetov, M. {\it Group gradings on simple Lie algebras}, Mathematical Surveys and Monographs \textbf{189} (2013), American Mathematical Society, Providence, RI, xiv+336 pp.

\bibitem{EK_loop}  Elduque, A.; Kochetov, M. {\it Graded simple modules and loop modules}, in: Groups, rings, group rings, and Hopf algebras, 
Contemp. Math. \textbf{688} (2017), 53--85.

\bibitem{EK_G2D4} Elduque, A.; Kochetov, M.  {\it Gradings on the simple real Lie algebras of types $G_2$ and $D_4$}, 
arXiv:1803.10949, 38 pp.

\bibitem{GZ} Giambruno, A.; Zaicev, M. {\it Polynomial identities and asymptotics methods},
Mathematical Surveys and Monographs \textbf{122} (2005), American Mathematical Society, Providence, RI, xiv+352 pp.

\bibitem{Griess} Griess, R.L., Jr. {\it A Moufang loop, the exceptional Jordan algebra, and a cubic form in $27$ variables}, J. Algebra \textbf{131} (1990),
no.~1, 281--293.

\bibitem{J} Jacobson, N. {\it Structure and Representations of Jordan Algebras}, AMS Coll. Publ. \textbf{39} (1968), 
American Mathematical Society, Providence, RI, x+453 pp.

\bibitem{K1} Kleinfeld, E. {\it Simple alternative rings}, Ann. of Math. (2) \textbf{58} (1953), 544--547.

\bibitem{K2} Kleinfeld, E. {\it Alternative nil rings}, Ann. of Math. (2) \textbf{66} (1957), 395--399.


\bibitem{ARE} Rodrigo-Escudero, A. \textit{Classification of division gradings on finite-dimensional simple real algebras}, Linear Algebra and its Applications 
\textbf{493} (2016), 164--182.


\bibitem{ZSSS} Zhevlakov, K.A.; Slin'ko, A.M.; Shestakov, I.P.; Shirshov, A.I. \textit{Rings that are nearly associative}
(translated from Russian by H.F. Smith), Pure and Applied Mathematics \textbf{104} (1982), Academic Press, Inc., New York--London, xi+371 pp.

\end{thebibliography}
\end{document}